\definecolor{darkgreen}{rgb}{0.00,0.5,0.00}
\newtheorem{theorem}{Theorem}
\theoremstyle{definition}
\newtheorem{lemma}{Lemma}
\newtheorem{proposition}{Proposition}
\newtheorem{remark}{Remark}
\newtheorem{corollary}{Corollary}
\newtheorem{assumption}{Assumption}
\newcommand{\R}{\mathbb R}
\newcommand{\M}{\mathcal{M}}
\newcommand{\y}{\bar y}
\newcommand{\FBE}{\varphi_{\gamma,f,g}^{\rm FB}}
\newcommand{\RFB}{Y^{\rm FB}_{f,g}}
\newcommand{\RDY}{Y^{\rm DY}_{f,h,g}}
\newcommand{\DYE}{\varphi_{\gamma}^{\rm DY}}
\newcommand{\DYEk}{\varphi_{\gamma_k}^{\rm DY}}
\newcommand{\toe}{\xrightarrow[]{e}}
\newcommand{\tos}{\xrightarrow[]{s}}
\newcommand{\toc}{\xrightarrow[]{c}}
\DeclareMathOperator{\inte}{int}
\DeclareMathOperator{\prox}{prox}
\DeclareMathOperator{\proj}{proj}
\DeclareMathOperator{\epi}{\mbox{epi}}
\DeclareMathOperator{\dom}{dom}
\DeclareMathOperator*{\argmin}{argmin}
\title{Shadow splitting methods for nonconvex optimisation: epi-approximation, convergence and saddle point avoidance}
\author{Felipe Atenas\footnote{School of Mathematics and Statistics, University of Melbourne, \href{mailto:felipe.atenas@unimelb.edu.au}{felipe.atenas@unimelb.edu.au}}}
\date{\today}
\begin{document}
\maketitle

\begin{abstract}

We propose the shadow Davis-Yin three-operator splitting method to solve nonconvex optimisation problems. Its convergence analysis is based on a merit function resembling the Moreau envelope. We explore variational analysis properties behind the merit function and the iteration operators associated with the shadow method. By capitalising on these results, we establish convergence of a damped version of the shadow method via sufficient descent of the merit function, and obtain (almost surely) guarantees of avoidance of strict saddle points of weakly convex semialgebraic optimisation problems. We perform numerical experiments for a nonconvex variable selection problem to test our findings.
\end{abstract}

\textbf{Keywords. } 
Nonconvex optimisation, weak convexity, Davis-Yin splitting, Douglas-Rachford splitting, Moreau envelope, strict saddle point
\medskip

\textbf{MSC2010.} {
49J52, 
65K05, 
65K10, 
90C26, 
90C30 
}


\section{Introduction}

We consider structured nonconvex optimisation problems of the form \begin{equation} \label{DY-problem}
    \min_{x \in \R^d} \varphi(x) := f(x) + g(x) + h(x),
\end{equation} where $f, h : \R^d \to \R$ are smooth functions, and $g : \R^d \to \R \cup \{+\infty\}$ is a \emph{prox-friendly} function, that is, a function whose proximal operator $\prox_{g}$ (see \eqref{eq:prox}) is readily available or easy to compute. Our main interest in this work is an iterative method to solve problems of the form \eqref{DY-problem} that exploits the separable nature of the objective function by iteratively performing separate operations in the model components. In particular, we explore a variant of the Davis-Yin (DY) three-operator splitting method with damping. The original form of the DY method can be stated as follows: given  an initial point $z^0 \in \R^d$, a stepsize parameter $\gamma >0$, a relaxation parameter $\lambda > 0$, and an iteration counter $k\geq1$, the $k$-th iteration of the DY method is given by
\begin{equation} \label{DY}
    \left\{\begin{array}{rcl}
        x^k & = & \prox_{\gamma f}(z^k)  \\
        y^k & = & \prox_{\gamma g}(2x^k-z^k - \gamma \nabla h(x^k)) \\
        z^{k+1} & = &  z^k + \lambda (y^k - x^k).
    \end{array}\right.
\end{equation} The iterative scheme in \eqref{DY} reduces to two well-known two-operator splitting methods \cite{douglas1956numerical,lions1979splitting}: when $f =0$, it corresponds to the proximal gradient or \emph{forward-backward} (FB) method, and when $h = 0$, it is the \emph{Douglas-Rachford} (DR) method. Problems with this type of structure appear in several applications in statistics, signal processing, and machine learning \cite{peng2016coordinate, parikh2014proximal,boyd2011distributed}.

The DY method was originally introduced in \cite{davis2017three} to solve convex optimisation problems in the form of \eqref{DY-problem}, under the assumption that both $f$ and $g$ are convex, and $h$ is smooth and convex with cocoercive gradient. In \cite{bian2021three,alcantara2024four}, the authors established convergence of the method in a nonconvex setting by means of suitable \emph{merit functions} that satisfy a sufficient descent condition alongside iterations, where $f$ and $h$ are smooth with Lipschitz continuous gradients and $g$ is prox-friendly nonconvex. Under similar lenses, another three-operator splitting method for nonconvex optimisation problems is examined in \cite{alcantara2025relaxed}. Moreover, the authors of \cite{liu2019envelope} analysed the method in \eqref{DY} as a variable metric gradient method for the so-called \emph{Davis-Yin envelope}, and obtained conditions for the FB and the DR methods to avoid \emph{strict saddle points} with high probability in the fully smooth case, namely, assuming in addition that $f,g$ and $h$ are $C^2$ functions.

Our goal is to investigate the reach of the Davis-Yin envelope and its corresponding solution map in nonconvex optimisation. In \cite{liu2019envelope}, this envelope is tailored to the fixed-point sequence $(z_k)_{k\geq1}$ defined in \eqref{DY}. Instead, the envelope in this work is defined in such a way that its argument is associated with  the sequence $(x_k)_{k\geq1}$ in \eqref{DY}, the so-called \emph{shadow sequence}. Namely, \emph{our} Davis-Yin envelope (DYE) is defined for all $x \in \R^d$ as \begin{equation}  \label{DY-env-x}
    \varphi_\gamma^{\rm DY}(x)  :=  \inf_{y \in \R^d} \left\{ f(x) + \langle \nabla f(x), y - x\rangle  + \: h(x) + \langle \nabla h(x), y -x\rangle  + \: g(y) + \frac{1}{2\gamma}\| y -  x \|^2 \right\}.
\end{equation}  We investigate smoothing and approximation properties of the DYE in \eqref{DY-env-x}, and continuity properties of the associated solution map. Additionally, we also examine how the DY method avoids solutions of ``poor quality'', that is, (strict) saddle points, when nonsmoothness appears in a structured manner. For the latter, we establish convergence of a damped version of \eqref{DY}, presented in Algorithm~\ref{a:damped-DY}, and state conditions under which this method converges to local minimisers, almost surely.  We discover that the shadow sequence plays a central role in the avoidance of strict saddle points. Characterising this behaviour is particularly relevant in nonconvex optimisation, since different from the convex case, stationary points may not always be global minimisers.

\begin{algorithm}[!ht]
\caption{Damped shadow Davis-Yin splitting for solving \eqref{DY-problem}. \label{a:damped-DY}}
\SetKwInOut{Input}{Input}
\Input{Choose a starting point $x^0 \in \R^d$, a stepsize $ \gamma >0$, a damping parameter $\alpha \in (0,1)$, and a relaxation parameter $\lambda >0$.}

\For{$k=1,2,\dots$}{
Step 1. Proximal gradient step. Define \begin{equation*}
    y^k = \prox_{\gamma g}(x^k - \gamma \nabla (f+h)(x^k)).
\end{equation*}

Step 2. Damped shadow step. Define \begin{equation*}
    x^{k+1} = (1-\alpha)x^k + \alpha \prox_{\gamma f}((1-\lambda)x^k + \gamma \nabla f(x^k) + \lambda y^k).
\end{equation*}}
\end{algorithm}

In Algorithm~\ref{a:damped-DY},  Step 1 is a proximal gradient step applying the proximal operator to $g$ and a gradient step to the smooth function $f+h$. As for Step 2, observe that $f$ and $h$ do not play symmetric roles: although Step 1 is performed for $f+h$ as one smooth term, the argument of the proximal operation in Step 2 is not $(1-\lambda)x^k +\gamma \nabla (f+h)(x^k)  + \lambda y^k$. Hence, Algorithm~\ref{a:damped-DY} is conceptually different from a damped version of the proximal gradient method to minimise $(f+h)+g$. We shall see in Section~\ref{s:damped-DY} that Algorithm~\ref{a:damped-DY} is essentially equivalent to the scheme in \eqref{DY} when $\alpha=1$.  

Our analysis is based on the shadow sequence, an approach that has been largely unexplored for splitting in nonconvex optimisation. In \cite{csetnek2019shadow} the shadow Douglas-Rachford method is introduced for monotone inclusions, and thus applicable to convex optimisation. For nonconvex optimisation, in \cite{themelis2020douglas,bian2021three,alcantara2024four}, the authors examine splitting methods using envelopes that take the fixed-point sequence $(z_k)_{k\geq1}$ as input. The relevance of the latter sequence comes from the convex optimisation case (or, more generally, for the maximally monotone case for operators), as splitting methods are known to be equivalent to fixed-point iterations of an appropriate nonexpansive operator (i.e. Lipschitz continuous with constant $L=1$), see \cite{eckstein1992douglas,malitsky2023resolvent,atenas2025relocated} and references therein. In the convex optimisation case, the base of the analysis is usually \emph{Fej{\'e}r monotonicity} with respect to the set of fixed-points of the iteration operator (or some generalisation), a property that may fail to hold in the nonconvex setting. Although useful, the fixed-point sequence does not seem to reveal new insights on splitting methods for nonconvex optimisation, and thus we propose to centre the analysis around the shadow sequence. Another fact that supports our proposed approach is that, in both convex and nonconvex case, the customary argument is the following: establish (subsequential) convergence of the fixed-point sequence $(z_k)_{k\geq1}$, from where solutions of the problem in \eqref{DY-problem} are recovered by applying $\prox_{\gamma f}$ to the limits of $(z_k)_{k\geq1}$, which are in turn limits of the shadow sequence $(x_k)_{k\geq1}$. This line of reasoning suggests that $(z_k)_{k\geq1}$ is an artifact that facilitates the analysis, but it does not directly provide solutions to the optimisation problem, and a different perspective might be available. This is exactly what we explore in this work:  we skip the fixed-point sequence $(z_k)_{k\geq1}$, and directly inspect the shadow sequence $(x_k)_{k\geq1}$.

The rest of this paper is organised as follows. In Section~\ref{s:prelim}, we present some background needed for our analysis, including facts from variational analysis and a brief discussion on envelopes. In Section~\ref{s:continuity-epi}, we investigate the iteration operators of the (damped) shadow DY method (Algorithm~\ref{a:damped-DY}), and approximation and smoothing properties of the DYE. Section~\ref{s:damped-DY} is dedicated to the analysis of Algorithm~\ref{a:damped-DY} using the shadow sequence, establishing its asymptotic behaviour. We show in Section~\ref{s:saddle-avoidance} that the damped shadow DY method converges to local minimisers almost surely under structural properties of the objective function in problem \eqref{DY-problem}. 
Section~\ref{s:numerical} illustrates our results for the nonconvex elastic net variable selection problem. In Section~\ref{s:conclusion}, we conclude with some final remarks and future research directions.

\section{Notation and preliminaries} \label{s:prelim}

A function $\phi: \R^d \to \R \cup \{+\infty\}$ is said to be proper when its domain is nonempty, that is, $\dom(\phi) := \{ x \in \R^d: \phi(x) < +\infty\} \neq \emptyset$. The epigraph of $\phi$ is the set $\epi(\phi) : = \{ (x,t) \in \R^d \times \R: \phi(x) \leq t\}$. 
We say that $\phi$ is level-bounded if it has bounded level sets, that is, if for any $\alpha \in \R$, the set ${\rm lev}_\phi(\alpha) := \{x\in \R^d: \phi(x) \leq \alpha\}$ is bounded. A function $\phi$ is locally Lipschitz continuous, if for all $\bar{x} \in \dom(\phi)$, there exists a neighbourhood $U$ of $\bar{x}$, and a constant $L = L(U)>0$, such that for all $x, y \in U$, $|\phi(x)-\phi(y)| \le L\|x-y\|$. We say $\phi$ is (globally)  Lipschitz continuous if the latter estimate holds for $U = \R^d$ with a uniform constant $L>0$ over the whole space. A function $\phi$ is $\rho$-weakly convex ($\rho\geq0$) whenever $\phi(\cdot) + \frac{\rho}{2}\|\cdot\|^2$ is convex. For a differentiable function $\phi$, we say $\phi$ is $L_\phi$-smooth if its gradient $\nabla \phi$ is Lipschitz continuous with parameter $L_\phi \geq 0$. In particular, such a function $\phi$ is $\rho$-weakly convex, where $\rho$ is bounded below by the smallest Lipschitz constant of $\nabla \phi$ \cite[Proposition 2.4]{atenas2023unified}.  If $\phi$ is a $L_\phi$-smooth $C^2$ function, then for all $x \in \R^d$, $\|\nabla^2 \phi(x)\|_{\rm op} \leq L_f$, where $\|\cdot\|_{\rm op} $ denotes the operator norm, and $I \pm \gamma \nabla^2\phi(x)$ is positive definite for $\gamma \in (0, L_\phi^{-1})$. If $\phi$ is $\rho$-weakly convex and twice continuously differentiable, then $\R^d \ni x \mapsto \prox_{\gamma \phi}(x)$ is well-defined and continuously differentiable for $\gamma \in (0, \rho^{-1})$ \cite[Lemma 4.2]{liu2019envelope}.

\subsection{Variational analysis}

For a set $C \subseteq \R^d$, and a sequence of sets $(C^k)_{k\geq1}$ in $\R^d$, the inner limit of $( C^k)_{k\geq1}$, denoted $\liminf C^k$, is the set of all limit points of sequences $(x^k)_{k\geq1}$ such that $x^k\in C^k$ for all $k\geq1$, that is, \[ \liminf C^k = \{x \in \R^d | \: \exists C^k \ni x^k \to x\}.\] The outer limit of $(C^k)_{k\geq1}$, denoted $\limsup C^k$, is the set of all cluster points  of sequences $(x^k)_{k\geq1}$ with elements in $C^k$ throughout the respective convergent subsequence, that is, \[ \limsup C^k = \{x \in \R^d | \: \exists (x_k)_{k\geq1} \subseteq \R^d, \mbox{ and a subsequence }  C^{k_j} \ni x^{k_j} \to x \mbox{ as } j\to+\infty\}.\]  Note that $\liminf C^k \subseteq \limsup C^k$. We say $(C^k)_{k\geq1}$ set-converges to $C$, denoted $C^k \tos C$, if \[\limsup C^k \subseteq C \subseteq \liminf C^k.\] A set-valued map $S: \R^d \rightrightarrows \R^d$ is said to be outer semicontinuous at $\bar{x}$ if $\limsup S(x^k) \subseteq S(\bar{x})$ for any sequence $x^k \to \bar{x}$. For a function $\phi: \R^d \to \R \cup \{+\infty\}$, we say a sequence of extended-valued functions $(\phi^k)_{k\geq1}$ \emph{epi-converges} to $\phi$, denoted $\phi^k \toe \phi$, if $\epi(\phi^k) \tos \epi(\phi)$. 



For a lower semicontinuous function $\phi$, we say $s_{\phi}: \R^d \times \R_+ \to \R$ is an \emph{epi-smoothing} function \cite{burke2013epi} for $\phi$ if (i)  $s_{\phi}(\cdot, \gamma_k)$ epi-converges to $\phi$ for all $\gamma_k \downarrow 0$, and (ii) $s_{\phi}(\cdot, \gamma)$ is continuously differentiable for all $\gamma >0$. An example of an epi-smoothing function is the Moreau envelope of a proper lower semicontinuous convex function (see \cite{burke2013epi}), which we proceed to define. Given a point $x \in \R^d$ and a proper function $\phi: \R^d \to \R \cup \{+\infty\}$, the proximal operator of $\phi$ at $x$ with prox-parameter $\gamma > 0$ is defined as \begin{equation} \label{eq:prox}
    \prox_{\gamma \phi}(x) :=  \argmin\limits_{y \in \R^d} \left\{ \phi(y) + \frac{1}{2\gamma}\| y - x\|^2 \right\}.
\end{equation} The optimal value of this minimisation problem defines the Moreau envelope at $x$, \begin{equation*}
    \phi^M_\gamma (x) := \inf_{y \in \R^d}\left\{ \phi(y) + \frac{1}{2\gamma}\|y-x\|^2\right\}.
\end{equation*} The proximal operator and the Moreau envelope are well-defined whenever $\phi$ is prox-bounded with threshold $\gamma_\phi >0$, namely, when $\phi(\cdot) + \frac{1}{2\gamma_\phi}\|\cdot\|^2$ is bounded from below. In this case,  $\prox_{\gamma \phi} $ is outer semicontinuous \cite[Example 5.23(b)]{rockafellar2009variational}. If $\phi$ is $\rho$-weakly convex (and thus prox-bounded with threshold $\rho^{-1}$), then for any $\gamma \in (0, \rho^{-1})$, $\prox_{\gamma \phi}$ is single-valued and Lipschitz continuous with constant $\frac{\rho}{\rho-\gamma}$ \cite[Proposition 12.19]{rockafellar2009variational}, and $\phi^M_\gamma$  is continuously differentiable with gradient given by \begin{equation}\label{eq:moreau-grad}\nabla \phi_{\gamma}^M(x) = \frac{1}{\gamma}\big(x - \prox_{\gamma \phi}(x)\big),\end{equation}such that $\nabla \phi_{\gamma}^M$ is Lipschitz continuous with constant $\max\{\gamma^{-1},\frac{\rho}{1-\rho\gamma}\}$ \cite[Corollary 3.4]{hoheiselproximal}. 

For a proper lower semicontinuous function $\phi: \R^d \to \R \cup \{+\infty\}$, the (limiting) subdifferential of $\phi$ at $\bar{x} \in \dom(\phi)$ is defined as the outer limit of the Fréchet subdifferential in the $\phi$-attentive sense \cite[Definition 8.3]{rockafellar2009variational}, namely, \begin{equation*}
    \partial \phi(\bar{x}) = \limsup_{\substack{x \to \bar{x}\\\phi(x) \to \phi(\bar{x})}}\left\{ s \in \R^d: \liminf_{\substack{y \to x\\y\neq x}} \frac{\phi(y) -\phi(x) - \left<s,y-x \right>}{\|y-x\|} \geq 0 \right\}.
\end{equation*} Observe that $ \partial \phi$ is outer semicontinuous from construction. When $\phi$ is convex, $\partial \phi$ coincides with the subdifferential of convex analysis. In the latter case, the subdifferential is known to be a maximally monotone operator \cite[Theorems~12.17~\&~12.25]{rockafellar2009variational}. A point $x \in \R^d$ such that $ 0 \in \partial \phi (x)$ is said to be a stationary point of $\phi$. For the problem in \eqref{DY-problem}, $x$ is a stationary point of $\varphi$ if $ 0 \in \partial g(x) + \nabla (f+h)(x)$ in view of the subdifferential sum rule \cite[Exercise 8.8(c)]{rockafellar2009variational}. We say $\phi^\star \in \R$ is a stationary value of the function $\phi$ if there exists a stationary point $x^\star$ of $\phi$ such that $\phi(x^\star) = \phi^\star$. For a set-valued map $S: \R^d \rightrightarrows \R^d$, $D^*S$ denotes the coderivative of $S$ \cite[Definition 8.33]{rockafellar2009variational}.



\subsection{Envelopes and iteration operators}


From \eqref{eq:moreau-grad}, it directly follows that $\prox_{\gamma \phi} = I -\gamma \nabla \phi_\gamma^M$ when $\phi$ is weakly convex. Hence, the proximal point method \cite{martinet,rockafellar1976monotone}, whose iterates are defined via $x^{k+1} = \prox_{\gamma \phi}(x^k)$ for all $k\geq1$, can be interpreted as the gradient descent method on $\phi_\gamma^M$. For this reason, similar merit functions have been proposed in the literature \cite{themelis2020douglas,liu2019envelope} to analyse splitting methods as methods of descent. In particular,  in \cite{liu2019envelope} the following merit function is introduced: for all $z \in \R^d$,  \begin{multline*}
         V_\gamma(z) := f_{\gamma}^M(z) - \gamma \|\nabla f_{\gamma}^M(z)\|^2 - \gamma \langle \nabla h(\prox_{\gamma f}(z)), \nabla f_{\gamma}^M(z) \rangle + h(\prox_{\gamma f}(z)) \\ - \frac{\gamma}{2}\|\nabla h(\prox_{\gamma f}(z))\|^2 + g_{\gamma}^M\big(z - 2\gamma \nabla f_{\gamma}^M(z) - \gamma \nabla h (\prox_{\gamma f}(z))\big).
    \end{multline*}This definition is inspired by the Douglas-Rachford envelope introduced in \cite{patrinos2014douglas}. One of the advantages of the above formulation is its explicit relation with the Moreau envelope, and thus properties of the latter can be exploited directly. Nevertheless, it lacks interpretability. Here, we reformulate it similarly to the merit functions presented in \cite{themelis2020douglas,alcantara2024four}. After some algebra manipulations, one yields to\begin{multline} \label{DY-z}
    V_\gamma(z)  =  \inf_{y \in \R^d} \left\{ f(\prox_{\gamma f}(z)) + \langle \nabla f(\prox_{\gamma f}(z)), y -\prox_{\gamma f}(z)\rangle \right. \\ \left. + \: h(\prox_{\gamma f}(z)) + \langle \nabla h(\prox_{\gamma f}(z)), y -\prox_{\gamma f}(z)\rangle \right. \\ \left. + \: g(y) + \frac{1}{2\gamma}\| y -  \prox_{\gamma f}(z) \|^2 \right\}.
\end{multline} The formulation in \eqref{DY-z} suggests that $V_\gamma$ is the value function associated with a model of the objective function in \eqref{DY-problem}, where $f$ and $h$ are replaced by their first-order Taylor approximations. The first-order optimality condition of the problem in the second line in \eqref{DY} reveals that $y = y^k$ solves the problem in \eqref{DY-z} when $z = z^k$. Observe that the argument of the merit function in \eqref{DY-z} is related to the sequence $(z_k)_{k\geq1}$ in \eqref{DY}.  The merit function in \eqref{DY-z} coincides with the one defined in \cite[eq. (3.7)]{alcantara2024four}, when $p = 0$ therein.

In this work, we propose the DYE  introduced in \eqref{DY-env-x}, in such a way that its argument is linked to the sequence $(x_k)_{k\geq1}$ in \eqref{DY}. Observe that $\varphi_\gamma^{\rm DY} \circ \prox_{\gamma f} = V_\gamma $. This seemingly naive change of variables has a direct impact in the study of saddle point avoidance (see Section~\ref{s:saddle-avoidance}), as it explicitly relates the DYE with the merit function known as the \emph{forward-backward envelope}, without the need of the composition with $\prox_{\gamma f}$ as above. More precisely, given $x \in \R^d$, the forward-backward envelope \begin{equation} \label{FB-env}
    \FBE(x) := \inf_{y \in \R^d} \left\{ f(x) + \langle \nabla f(x), y -x\rangle + g(y) + \frac{1}{2\gamma}\| y - x\|^2 \right\},
\end{equation}  satisfies \begin{equation} \label{DY-FB-identity-2}
    \DYE(x) = \varphi_{\gamma,f+h,g}^{\rm FB}(x).
\end{equation} 

In the following, we state our blanket assumptions and properties of the DYE \eqref{DY-env-x}.

\begin{assumption} \label{a:ass1}
    Consider the optimisation problem in \eqref{DY-problem}, and suppose that\begin{enumerate}
        \item[(i)] $f: \R^d \to \R$ is $L_f$-smooth, 
        and that $h: \R^d \to \R$ is $L_h$-smooth,
        \item[(ii)] $g: \R^d \to \R \cup \{+\infty\}$ is proper lower semicontinuous and prox-bounded with threshold $\gamma_g >0$,
        \item[(iii)] the problem in \eqref{DY-problem} has a nonempty set of minimisers. In particular, $\varphi$ is bounded below.
    \end{enumerate}
\end{assumption}

\begin{lemma} \label{l:envelope-prop}  Let Assumption~\ref{a:ass1} hold, and $\gamma \in (0, \min\{\gamma_g,(L_f+L_h)^{-1}\}$). Then, 
    \begin{enumerate}
        \item[(i)]  $\DYE$  is locally Lipschitz continuous
        \item[(ii)]  $\inf \varphi = \inf \DYE$
        \item[(iii)] $\varphi$ is level bounded if and only if $\DYE$ is level bounded.
    \end{enumerate}
\end{lemma}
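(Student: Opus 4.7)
The plan is to leverage the identity $\DYE(x) = \varphi_{\gamma, f+h, g}^{\rm FB}(x)$ from~\eqref{DY-FB-identity-2}, which recasts the Davis-Yin envelope as a forward-backward envelope for the pair $(f+h, g)$. Completing the square in~\eqref{FB-env} yields the equivalent representation
\[
\DYE(x) = (f+h)(x) - \frac{\gamma}{2}\|\nabla(f+h)(x)\|^2 + g_\gamma^M\bigl(x - \gamma \nabla(f+h)(x)\bigr),
\]
exposing the envelope as the sum of smooth terms plus a Moreau envelope of $g$ evaluated at a smooth argument. All three parts of the lemma will then follow from this decomposition together with the descent lemma applied to $f+h$ (which is $L$-smooth with $L \le L_f + L_h$).

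For (i), I would argue each summand above is locally Lipschitz: $f+h$ is $C^1$ by Assumption~\ref{a:ass1}(i); the squared-gradient term is the composition of a globally Lipschitz map with a smooth quadratic; and $g_\gamma^M$ is locally Lipschitz on $\R^d$ for $\gamma \in (0, \gamma_g)$ as a consequence of the local boundedness and outer semicontinuity of $\prox_{\gamma g}$ for a proper lsc prox-bounded $g$, while $x \mapsto x - \gamma \nabla(f+h)(x)$ is itself locally Lipschitz. Sums and compositions then preserve the property. For (ii), the upper bound $\DYE(x) \leq \varphi(x)$ is immediate by plugging $y = x$ into the infimum defining $\DYE(x)$. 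The reverse inequality follows from the descent lemma: for $\gamma < (L_f+L_h)^{-1}$ and any $y \in \R^d$,
\[
(f+h)(x) + \langle \nabla(f+h)(x), y-x\rangle + \frac{1}{2\gamma}\|y-x\|^2 \geq (f+h)(y) + \Big(\tfrac{1}{2\gamma} - \tfrac{L_f+L_h}{2}\Big)\|y-x\|^2 \geq (f+h)(y),
\]
so adding $g(y)$ and taking the infimum over $y$ gives $\DYE(x) \geq \inf \varphi$, and combined with the upper bound, $\inf \DYE = \inf \varphi$.

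For (iii), the implication \emph{$\DYE$ level-bounded $\Rightarrow$ $\varphi$ level-bounded} is direct from the pointwise inequality $\DYE \leq \varphi$, which nests the level sets of $\varphi$ inside those of $\DYE$. The converse is the main obstacle: fix $\alpha \in \R$ and $x$ with $\DYE(x) \leq \alpha$, and select any $y^+ \in \prox_{\gamma g}(x - \gamma \nabla(f+h)(x))$ realising the infimum in $\DYE(x)$. The sharper form of the descent-lemma estimate above then produces
\[
\DYE(x) \geq \varphi(y^+) + \Big(\tfrac{1}{2\gamma} - \tfrac{L_f+L_h}{2}\Big)\|y^+-x\|^2,
\]
from which I would simultaneously extract (a) $y^+ \in {\rm lev}_\varphi(\alpha)$, a bounded set by hypothesis, and (b) the uniform bound $\|y^+-x\|^2 \leq (\alpha - \inf \varphi)\big/(\tfrac{1}{2\gamma} - \tfrac{L_f+L_h}{2})$, which is finite thanks to Assumption~\ref{a:ass1}(iii). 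The triangle inequality then yields boundedness of $x$, so ${\rm lev}_{\DYE}(\alpha)$ is bounded. The delicate point is that both pieces of information must be teased out from a single energy estimate, and the control on $\|y^+-x\|$ crucially relies on $\varphi$ being bounded below.
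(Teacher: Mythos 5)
Your proof is correct, and it is self-contained where the paper's is not: the paper disposes of all three items by composing with $\prox_{\gamma f}$ to reduce to the merit function $V_\gamma$ and then citing the Douglas--Rachford-envelope results of Themelis--Patrinos and the Davis--Yin-envelope results of Liu--Yin. You instead exploit the other identity available in the paper, $\DYE = \varphi^{\rm FB}_{\gamma,f+h,g}$ from \eqref{DY-FB-identity-2}, complete the square to expose $\DYE$ as smooth terms plus $g^M_\gamma$ evaluated along a Lipschitz map, and run the descent-lemma argument directly. This buys a proof that never touches $\prox_{\gamma f}$ or the fixed-point variable $z$ (in keeping with the paper's own ``shadow'' philosophy), at the cost of reproving what the references already contain. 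All the key steps check out: the sandwich $\inf\varphi \le \DYE \le \varphi$ follows from the descent lemma for $\gamma \le (L_f+L_h)^{-1}$ and from taking $y=x$; the single energy inequality $\DYE(x) \ge \varphi(y^+) + \bigl(\tfrac{1}{2\gamma}-\tfrac{L_f+L_h}{2}\bigr)\|y^+-x\|^2$ does indeed deliver both $y^+\in{\rm lev}_\varphi(\alpha)$ and a uniform bound on $\|y^+-x\|$ (the coefficient is strictly positive precisely because $\gamma < (L_f+L_h)^{-1}$, and the right-hand side is controlled because $\varphi$ is bounded below by Assumption~\ref{a:ass1}(iii)). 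Two justifications are slightly underspecified but standard and fixable in one line each: local Lipschitz continuity of $g^M_\gamma$ for prox-bounded lsc $g$ is cleanest via the representation of $g^M_\gamma$ as a quadratic minus a finite convex function (rather than via outer semicontinuity of $\prox_{\gamma g}$), and the minimiser $y^+$ exists for $\gamma < \gamma_g$ because prox-boundedness makes the inner objective level-bounded in $y$.
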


\begin{proof}
   Given that $\varphi_\gamma^{\rm DY} \circ \prox_{\gamma f} = V_\gamma $, then item (i) can be deduced similarly to \cite[Proposition 3.2]{themelis2020douglas}, item (ii) can be obtained from \cite[Theorem 4.1 1.]{liu2019envelope}, and item (iii) is analogous to \cite[Theorem 3.4]{themelis2020douglas}.
\end{proof}

\begin{remark}
In \cite{bian2021three}, a different merit function is introduced to analyse the DY method. In the context of the iterative scheme \eqref{DY}, said merit function, denoted $\mathfrak{D}_{\gamma}$, satisfies $\mathfrak{D}_{\gamma}(x^k,y^k,z^k) = V_\gamma(z^k) - \frac{1}{\gamma}\|z^{k+1} - z^k\|^2$ for all $k\geq 1$. Hence, $\mathfrak{D}_{\gamma}(x^k,y^k,z^k) \leq V_\gamma(z^k) \leq \varphi(x^k)$, where the second inequality follows from taking $z = z^k$ and $y = x^k$ in \eqref{DY-z}. In this manner, the merit function  $V_\gamma$ (and thus, the DYE in \eqref{DY-env-x}) is a tighter approximation of $\varphi$ than $\mathfrak{D}_{\gamma}$. As a consequence, the range of stepsizes for which the method in \eqref{DY} is convergent is larger when $V_\gamma$  is used in the analysis instead of $\mathfrak{D}_{\gamma}$ (see, e.g. \cite[Figure 1]{alcantara2024four}). This fact has a direct consequence in the numerical performance of the method.

\end{remark}

\subsection{Active manifolds}

Given a map $T: \R^d \to \R^d$, we say $T$ is a \emph{lipeomorphism} if $T$  is  Lipschitz continuous, and its inverse $T^{-1}$ exists and is Lipschitz continuous as well. A point $\bar z$ is a fixed-point of $T$ if $\bar{z} = T(\bar z)$.  If  $T$ is a $C^1$ map around some fixed-point $\bar z$, we say $\bar z$ is an \emph{unstable fixed-point} of $T$ if the Jacobian $\nabla T(\bar z)$ has at least one eigenvalue with magnitude strictly greater than one.

We say that a set $\M \subseteq \R^d$ is a $C^2$-\emph{smooth manifold} around $\bar{x} \in \M$ \cite[Definition 2.2]{davis2022proximal}, if there exist an open neighbourhood $U$ of $\bar{x}$, and a $C^2$ function $G$ defined on $\R^d$, such that $\nabla G(\bar{x})$ has full rank, and $\M \cap U = \{x\in\R^d: G(x) = 0\}$. Intuitively, such $\M$ can be locally described around $\bar{x}$ as the solution of smooth equations with linearly independent gradients at $\bar{x}$. The system of equations $G=0$ is called the local defining equations for $\M$ around $\bar{x}$, and $T_{\M}(\bar{x}) := \mbox{ker}\big(\nabla G(\bar{x})\big)$ is the tangent space to $\M$ at $\bar{x}$.

Let $\phi: \R^d \to \R \cup \{+\infty\}$ be a proper lower semicontinuous $\rho$-weakly convex function, $\bar{x} \in\R^d$ be a stationary point of $\phi$, and  $\M \ni \bar{x}$. The set $\M$ is said to be an \emph{active} $C^2$\emph{-smooth manifold} of $\phi$ around $\bar{x}$ \cite[Definition 2.6]{davis2022proximal}, if there exists a neighborhood $U$ of $\bar{x}$, such that the following two properties hold: (i) (smoothness) $\M \cap U$ is a $C^2$-smooth manifold around $\bar{x}$, such that $\phi$ is a $C^2$ function on $\M \cap U$, and (ii) (sharpness) $\inf\{\|s\|: s \in \partial \phi(x), \: x \in U\setminus\M \}>0$. The sharpness condition essentially states that normal to the manifold, the function cannot be flat, that is, the norm of subgradients at points outside $\M$ are bounded away from $0.$ Naturally, these two conditions holds whenever $\phi$ is a $C^2$ function.

We say that a stationary point $\bar{x}$ of a weakly convex function  $\phi: \R^d \to \R \cup \{+\infty\}$ is a \emph{strict saddle} (point) of $\phi$ \cite[Definition 2.7]{davis2022proximal}, if there exists an active $C^2$-smooth manifold $\M$ of $\phi$ at $\bar{x}$, such that for some vector $u \in T_{\M}(\bar{x})$, the parabolic subderivative \cite[Definition 13.59]{rockafellar2009variational} satisfies $d^2 \phi_{\M}(\bar{x})(u) <0$.  A geometric interpretation of strict saddle points is that the function $\phi$ restricted to $\M$ has negative curvature at such points. Moreover, we say $\phi$ satisfies the \emph{strict saddle property}, if any stationary point of $\phi$ is either a strict saddle point or a local minimiser. As shown in \cite[Theorem~2.9]{davis2022proximal}, the strict saddle property is ``generic'', in the sense that it holds for perturbations of $\phi$ drawn from a set of full Lebesgue measure, when $\phi$ is lower semicontinuous weakly convex and semialgebraic. A function is semialgebraic if its graph can be described as the solution of a finite system of polynomial inequalities. The semialgebraic assumptions is now state-of-the-art due to the seminal work \cite{attouch2013convergence}. Examples of weakly convex semialgebraic functions are nonconvex regularisers used in statistics, for instance, the smoothly clipped absolute deviation  and  the minimax concave penalty \cite{fan2001variable,Zhang2010NearlyUV}.  The strict saddle property is the key assumption in Section~\ref{s:saddle-avoidance}.

\section{Properties of the DYE and solution maps} \label{s:continuity-epi}

We start the analysis of the DYE from a variational point of view. In this section, we establish that $\DYE$ is an epi-smoothing function of the objective function $\varphi$ in \eqref{DY-problem}, and deduce continuity properties for the associated solution map. These properties provide an intuitive interpretation of why envelopes help explain the behaviour of splitting methods (see \cite[Section 3]{atenas2025understanding}).

\subsection{Epi-approximation and epi-smoothing of the DYE}

We first establish that $\DYE$ \emph{epi-approximates} $\varphi$ (cf. \cite[Corollary 3.2.1]{atenas2025understanding}). The following result is a direct consequence of \eqref{DY-FB-identity-2} and \cite[Theorem 3.2]{atenas2025understanding}. 


\begin{proposition}[DYE epigraphic convergence] \label{DYE-epi-convergence}
Let Assumption~\ref{a:ass1} hold, and suppose $g: \R^d \to \R \cup \{+\infty\}$ is proper lower semicontinuous $\rho$-weakly convex. Then, for any $\gamma_k \downarrow 0$,  $\DYEk \toe \varphi $.
\end{proposition}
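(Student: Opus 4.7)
The plan is to exploit the identity $\DYE = \varphi_{\gamma,f+h,g}^{\rm FB}$ from \eqref{DY-FB-identity-2} to reduce the claim to epi-convergence of the forward-backward envelope. Since $f+h$ is $(L_f+L_h)$-smooth under Assumption~\ref{a:ass1}(i) and $g$ is proper, lower semicontinuous and $\rho$-weakly convex, the hypotheses of \cite[Theorem 3.2]{atenas2025understanding} are met and the conclusion $\DYEk \toe \varphi$ follows immediately. The bulk of the proof is simply to invoke this identification; the substance is therefore inherited from the FB-envelope theory.

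For a self-contained sketch, I would verify the two defining inequalities of epi-convergence. For the \emph{limsup inequality}, the constant recovery sequence $x^k \equiv x$ works: evaluating the infimum in \eqref{DY-env-x} at $y=x$ kills both Taylor linear terms and the quadratic penalty, giving $\DYEk(x)\leq \varphi(x)$ for every $k$, hence $\limsup_k \DYEk(x)\leq\varphi(x)$.

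For the \emph{liminf inequality}, I would use the closed form that follows from completing the square in \eqref{DY-env-x}, namely
\begin{equation*}
\DYEk(x^k) \;=\; (f+h)(x^k) \;-\; \frac{\gamma_k}{2}\|\nabla(f+h)(x^k)\|^2 \;+\; g^M_{\gamma_k}\!\big(x^k-\gamma_k\nabla(f+h)(x^k)\big),
\end{equation*}
valid for $\gamma_k\in(0,\min\{\gamma_g,(L_f+L_h)^{-1}\})$ by Assumption~\ref{a:ass1}. Given any sequence $x^k\to x$, continuity of $\nabla(f+h)$ and $\gamma_k\downarrow 0$ imply $(f+h)(x^k)\to (f+h)(x)$, the middle term vanishes, and $u^k:=x^k-\gamma_k\nabla(f+h)(x^k)\to x$. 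Weak convexity of $g$ ensures that $g^M_{\gamma_k}$ is well-defined for $\gamma_k<\rho^{-1}$, and the standard fact $g^M_{\gamma_k}\toe g$ yields $\liminf_k g^M_{\gamma_k}(u^k)\geq g(x)$. Adding these three limits gives $\liminf_k \DYEk(x^k)\geq \varphi(x)$.

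The only delicate point is the liminf bound for the Moreau term, since $g$ is not assumed convex: I would rely on the epi-smoothing property of the Moreau envelope in the weakly convex regime (c.f. the surrounding discussion on $\phi^M_\gamma$ in Section~\ref{s:prelim}). Once this is in place, combining the two inequalities delivers epi-convergence. Thus the main obstacle is not in the plan itself but in ensuring that the cited Moreau-envelope epi-convergence result is invoked at the correct stepsize range where $\prox_{\gamma_k g}$ is single-valued; this is guaranteed by the choice of $\gamma_k$ dictated by Lemma~\ref{l:envelope-prop}.
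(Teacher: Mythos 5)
Your proposal is correct and follows exactly the paper's route: the proof is simply the identification $\DYE = \varphi_{\gamma,f+h,g}^{\rm FB}$ from \eqref{DY-FB-identity-2} combined with \cite[Theorem 3.2]{atenas2025understanding}. Your additional self-contained sketch (constant recovery sequence for the limsup bound, and the completed-square form together with epi-convergence of the Moreau envelope of the prox-bounded function $g$ for the liminf bound) is also sound, though the paper does not spell it out.
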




For the DYE to be an epi-smoothing function, we also need to establish when it is continuously differentiable. First,  we characterise its subdifferential. For that, we denote the \emph{forward-backward map}, the solution map of the problem in \eqref{FB-env}, by \begin{equation} \label{FB-env-sol}
   \R^d \times \R_{++} \ni (x,\gamma) \mapsto\RFB(x, \gamma) := \prox_{\gamma g}\big(x- \gamma \nabla f(x)\big).
\end{equation} The following result follows directly from  \eqref{DY-FB-identity-2} and \cite[Lemma 3.3]{atenas2025understanding}.

\begin{lemma}[Subdifferential of DYE] \label{l:DYE-subdiff}

Let Assumption~\ref{a:ass1} hold. Suppose that $f$ and $h$ are twice continuously differentiable with Lipschitz continuous Hessians. Then, for any $\gamma \in (0, \min\{\gamma_g,(L_f+L_h)^{-1}\})$ and $x \in \R^d$, \begin{equation*}
		\partial \DYE(x) = \gamma^{-1}\big( I - \gamma \nabla^2 (f+h)(x) \big)\Big[x - \text{conv}\Big(Y_{f+h,g}^{\rm FB}(x)\Big)\Big],
	\end{equation*}   where conv$(\cdot)$ denotes the convex hull, and the sum (subtraction) in the right-most brackets should be understood in the Minkowski sense. 
\end{lemma}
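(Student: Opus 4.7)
The plan is to deduce this lemma as an immediate corollary of the forward-backward envelope subdifferential formula from \cite[Lemma 3.3]{atenas2025understanding}, via the identity \eqref{DY-FB-identity-2}, which expresses the DYE as a forward-backward envelope with smooth term $f+h$ and nonsmooth term $g$.

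First I would recall that, by \eqref{DY-FB-identity-2}, we have $\DYE(x) = \varphi_{\gamma,f+h,g}^{\rm FB}(x)$ for every $x \in \R^d$. The next step is to verify that the hypotheses of \cite[Lemma 3.3]{atenas2025understanding} are satisfied for the FB envelope formed with the smooth function $f+h$ and the prox-friendly function $g$. Under Assumption~\ref{a:ass1}, $f$ is $L_f$-smooth and $h$ is $L_h$-smooth, so $f+h$ is $(L_f+L_h)$-smooth. The additional twice continuous differentiability and Lipschitz continuity of the Hessians of $f$ and $h$ transfer to $f+h$, with $\nabla^2(f+h) = \nabla^2 f + \nabla^2 h$. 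The prox-boundedness assumption on $g$ with threshold $\gamma_g > 0$ ensures $\prox_{\gamma g}$ is well defined. Finally, the stepsize requirement $\gamma \in (0, \min\{\gamma_g, (L_f+L_h)^{-1}\})$ is exactly the admissible range for applying the cited FB envelope result with smooth term of Lipschitz constant $L_f+L_h$.

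Once the hypotheses are checked, I would apply \cite[Lemma 3.3]{atenas2025understanding} directly, which yields
\begin{equation*}
\partial \varphi_{\gamma,f+h,g}^{\rm FB}(x) = \gamma^{-1}\bigl(I - \gamma \nabla^2 (f+h)(x)\bigr)\bigl[x - \mathrm{conv}\bigl(Y_{f+h,g}^{\rm FB}(x)\bigr)\bigr],
\end{equation*}
and then identify the left-hand side with $\partial \DYE(x)$ via \eqref{DY-FB-identity-2}. The definition of $Y_{f+h,g}^{\rm FB}(x) = \prox_{\gamma g}(x - \gamma \nabla (f+h)(x))$ is exactly \eqref{FB-env-sol} applied to the composite pair $(f+h,g)$.

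Since the statement is a direct transfer of a cited result, there is no genuine technical obstacle; the only subtle point is making the hypothesis transfer explicit, particularly that $f+h$ inherits both the smoothness constant and the $C^2$ regularity with Lipschitz Hessian from $f$ and $h$ separately, and that the stepsize bound $(L_f+L_h)^{-1}$ is the correct one. I would present the proof concisely, emphasizing \eqref{DY-FB-identity-2} as the bridge and then invoking the cited subdifferential formula without rederivation.
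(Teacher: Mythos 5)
Your proposal matches the paper's argument exactly: the lemma is stated there as a direct consequence of the identity \eqref{DY-FB-identity-2} and \cite[Lemma 3.3]{atenas2025understanding}, which is precisely the bridge-plus-citation route you take. Your explicit verification that $f+h$ inherits the $(L_f+L_h)$-smoothness and the $C^2$ regularity with Lipschitz Hessian is a helpful elaboration of what the paper leaves implicit, but it is the same proof.
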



We are now ready to state the epi-smoothing property of the DYE.

\begin{theorem}[DYE as an epi-smoothing function] \label{t:DYE-epi-smoothing}

    Let Assumption~\ref{a:ass1} hold. Suppose that $f$ and $h$ are twice continuously differentiable with Lipschitz continuous Hessians, and $g: \R^d \to \R \cup \{+\infty\}$ is proper lower semicontinuous $\rho$-weakly convex. Then, $\R^d \times \R_{++} \ni (x, \gamma) \mapsto \DYE(x) $ is an epi-smoothing of $\varphi$.
    
\end{theorem}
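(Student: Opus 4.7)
The plan is to check the two defining properties of an epi-smoothing function: (a) $\DYE(\cdot)$ epi-converges to $\varphi$ as $\gamma_k \downarrow 0$, and (b) $\DYE(\cdot)$ is continuously differentiable for every admissible $\gamma > 0$. Condition (a) is already Proposition~\ref{DYE-epi-convergence} under the current hypotheses, so all the work lies in (b).

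For condition (b), I would restrict $\gamma$ to the range $(0, \min\{\rho^{-1}, (L_f+L_h)^{-1}\})$, which is contained in the range allowed by Lemma~\ref{l:envelope-prop}, since $\rho$-weak convexity of $g$ guarantees prox-boundedness with threshold $\rho^{-1}$. On this range, the preliminaries recalled in Section~\ref{s:prelim} give that $\prox_{\gamma g}$ is single-valued and Lipschitz continuous, and therefore the forward-backward map
\begin{equation*}
    \RFB(x,\gamma) = \prox_{\gamma g}\bigl(x - \gamma \nabla (f+h)(x)\bigr)
\end{equation*}
is single-valued and continuous in $x$, since $\nabla(f+h)$ is Lipschitz. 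Plugging this into Lemma~\ref{l:DYE-subdiff} collapses the convex hull to a point, so the limiting subdifferential reduces to
\begin{equation*}
    \partial \DYE(x) = \Bigl\{ \gamma^{-1}\bigl(I - \gamma \nabla^2 (f+h)(x)\bigr)\bigl[x - Y^{\rm FB}_{f+h,g}(x,\gamma)\bigr] \Bigr\},
\end{equation*}
a singleton at every $x \in \R^d$.

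From here I would combine the single-valuedness of $\partial \DYE$ with the local Lipschitz continuity of $\DYE$ supplied by Lemma~\ref{l:envelope-prop}(i). Since the Clarke subdifferential of a locally Lipschitz function is the convex hull of its limiting subdifferential \cite[Theorem~9.61]{rockafellar2009variational}, it is also a singleton everywhere; invoking \cite[Theorem~9.18]{rockafellar2009variational} then upgrades this to strict differentiability at every point, with $\nabla \DYE(x)$ equal to the unique element above. Continuity of $\nabla \DYE$ is then immediate: $\nabla^2(f+h)$ is continuous because $f,h$ are $C^2$ with Lipschitz Hessians, and $\RFB(\cdot,\gamma)$ is continuous as noted above, so $\nabla \DYE$ is the composition of continuous maps, delivering $\DYE(\cdot) \in C^1(\R^d)$.

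The main technical obstacle is the passage from single-valuedness of the limiting subdifferential to genuine $C^1$-smoothness: weak convexity of $g$ (and hence Lipschitz, but not necessarily differentiable, $\prox_{\gamma g}$) does not allow a direct chain-rule computation of $\nabla \DYE$, so one must rely on the Clarke subdifferential characterisation of strict differentiability together with the local Lipschitz property from Lemma~\ref{l:envelope-prop}(i), rather than attempting to differentiate the forward-backward map explicitly.
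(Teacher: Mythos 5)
Your proposal is correct and follows essentially the same route as the paper: epi-convergence is delegated to Proposition~\ref{DYE-epi-convergence}, and smoothness comes from Lemma~\ref{l:DYE-subdiff} once single-valuedness of $\prox_{\gamma g}$ (hence of $\RFB(\cdot,\gamma)$) collapses the convex hull to a singleton. In fact you are more careful than the paper on the one delicate step — passing from a singleton limiting subdifferential of a locally Lipschitz function to strict differentiability via the Clarke subdifferential — which the paper handles only by citing the gradient formula from the literature.
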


\begin{proof}
    Epiconvergence of the DYE to $\varphi$ is Proposition~\ref{DYE-epi-convergence}. Moreover, from Lemma~\ref{l:DYE-subdiff}, the gradient of the DYE (cf. \cite[eq. (17)]{liu2019envelope}) is given by \begin{equation*}
		\nabla \DYE(x) = \gamma^{-1}\big( I - \gamma \nabla^2 (f+h)(x) \big)\Big(x - Y_{f+h,g}^{\rm FB}(x)\Big). 
	\end{equation*}which is continuously differentiable for any $\gamma \in (0, \min\{ \rho^{-1},L_f^{-1}\})$, in view of \eqref{FB-env-sol} and  continuous differentiability of the proximal operator.
\end{proof}

\begin{remark}[On the redefinition of the DYE]

In \eqref{DY-env-x}, we defined the DYE as a function  of the ``shadow'' variable $x$, instead of the ``fixed-point'' variable $z$. Its direct consequence in Proposition~\ref{DYE-epi-convergence}, Lemma~\ref{l:DYE-subdiff}, and Theorem~\ref{t:DYE-epi-smoothing}, is that in contrast to \cite[Corollary~3.2.1 \& Corollary~3.2.2]{atenas2025understanding}, the proximal operator $\prox_{\gamma g}$ does not play a role in any of these results. This fact is key in Section~\ref{s:saddle-avoidance}.    
\end{remark}


\subsection{Solution maps: continuity properties} \label{sec:FB-continuity}

In this section, we study continuity properties of the forward-backward solution map \eqref{FB-env-sol} and its consequences for the \emph{DY map}, defines as \begin{equation} \label{eq:DY-solution-map}
   \RDY := Y^{\rm FB}_{f+h,g}.
\end{equation} The relationship between the DYE and the forward-backward envelope (and, by extension, with the Moreau envelope), suggests that not only epi-properties are inherited as discussed above, but also properties of their solution maps, even in nonconvex optimisation. 
By construction, any property of the forward-backward map will also hold for the DY map, under appropriate regularity assumptions. In the following, we list these properties for DY  map, and then we prove them for the forward-backward  map.

\begin{proposition}[Properties of the DY solution map] \label{DY-continuity-prop}
    Suppose $f: \R^d \to \R$ is $L_f$-smooth, $h: \R^d \to \R$ is $L_h$-smooth, and $g: \R^d \to \R \cup \{+\infty\}$ is proper lower semicontinuous $\rho$-weakly convex. Then, the following statements hold true.  

    \begin{itemize}
        \item[(i)] $\RDY(\cdot, \gamma)$ is Lipschitz continuous for  $\gamma \in (0, \rho^{-1})$. More specifically, for all $u_1,u_2 \in \R^d$, \begin{equation*}
        \| \RDY(u_1, \gamma) - \RDY(u_2, \gamma) \| \le \left( \dfrac{1 + \gamma (L_f+L_h)}{1 - \gamma\rho}\right)\| u_1 - u_2 \|.
    \end{equation*} 
    \item[(ii)] $\RDY(u^k, \gamma_k) \to \proj_{\overline{\dom(\varphi)}}(\bar{u})$ whenever $u^k \to \bar{u}$ and $\gamma_k \downarrow 0$. 

    \item[(iii)] For any $x \in \dom(\varphi)$, $\|\RDY(x,\gamma) - x\| = O(\gamma)$ as $\gamma \downarrow 0$. 

    \item[(iv)] If, in addition, $f$ and $h$ are $C^2$ functions, then $\RDY$ is a locally Lipschitz continuous function at any $(\bar{x},\bar{\gamma})\in\inte\big(\dom(g)\big) \times (0, \min\{\rho^{-1},(L_f + L_h)^{-1}\})$. 
    \end{itemize}
\end{proposition}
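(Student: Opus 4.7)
The plan is to reduce each of the four items to the analogous statement for the forward-backward solution map $Y^{\rm FB}_{F,g}$ applied with $F := f+h$. By definition,
\[
\RDY(x,\gamma) = \prox_{\gamma g}\bigl(x - \gamma \nabla F(x)\bigr),
\]
and $F$ is $(L_f + L_h)$-smooth while $g$ is proper lower semicontinuous and $\rho$-weakly convex, so every conclusion should follow by combining smoothness of $F$ with the Lipschitz and limiting properties of $\prox_{\gamma g}$ recalled in Section~\ref{s:prelim}.

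For (i), I would factor $\RDY(\cdot,\gamma)$ as $\prox_{\gamma g} \circ (\id - \gamma \nabla F)$. The forward step is Lipschitz with constant $1 + \gamma(L_f + L_h)$ by smoothness of $F$, and $\prox_{\gamma g}$ is Lipschitz with constant $\tfrac{1}{1-\gamma\rho}$ on $\R^d$ for $\gamma \in (0,\rho^{-1})$; the product of the two constants gives the claimed bound. For (ii), since $u^k - \gamma_k \nabla F(u^k) \to \bar u$ whenever $u^k \to \bar u$ and $\gamma_k \downarrow 0$, the conclusion reduces to the classical limit $\prox_{\gamma_k g}(v^k) \to \proj_{\overline{\dom g}}(\bar v)$ for proper lsc prox-bounded $g$, together with the identity $\dom\varphi = \dom g$. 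For (iii), fix $x \in \dom g$ and write $y = \RDY(x,\gamma)$. Comparing the prox objective at $y$ with the feasible candidate $y = x$ yields, after expansion,
\[
g(y) - g(x) + \langle \nabla F(x), y-x\rangle + \tfrac{1}{2\gamma}\|y-x\|^2 \le 0.
\]
Bounding $g(y) - g(x)$ from below via $\rho$-weak convexity with a subgradient $s \in \partial g(x)$ (which exists on a dense subset of $\dom g$, and suffices for the remaining points by a limiting argument) gives $\bigl(\tfrac{1}{2\gamma} - \tfrac{\rho}{2}\bigr)\|y-x\|^2 \le \|\nabla F(x) + s\|\,\|y-x\|$, from which $\|y-x\| = O(\gamma)$ as $\gamma \downarrow 0$.

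The hardest step is (iv). Joint local Lipschitz continuity in $(x,\gamma)$ cannot follow from the Lipschitz-in-$x$ estimate of (i) alone, because $\gamma$ appears both in the forward step and in the prox parameter. My plan is twofold: first, under the $C^2$ hypothesis on $f$ and $h$, the map $(x,\gamma) \mapsto x - \gamma \nabla F(x)$ is jointly $C^1$, hence locally Lipschitz; second, to establish that $(v,\gamma) \mapsto \prox_{\gamma g}(v)$ is jointly locally Lipschitz whenever the prox value lies in $\inte(\dom g)$. For the latter, I would pass to the convex surrogate $\tilde g := g + \tfrac{\rho}{2}\|\cdot\|^2$ via the identity
\[
\prox_{\gamma g}(v) = \prox_{\tilde\gamma \tilde g}\!\bigl(\tfrac{v}{1-\gamma\rho}\bigr), \qquad \tilde\gamma = \tfrac{\gamma}{1-\gamma\rho},
\]
where $\tilde g$ is proper lsc convex and locally Lipschitz on $\inte(\dom g)$, so standard perturbation estimates for the resolvent of a maximally monotone operator yield joint local Lipschitz continuity of $(\tilde v,\tilde\gamma) \mapsto \prox_{\tilde\gamma \tilde g}(\tilde v)$ on neighbourhoods where the resolvent value remains in $\inte(\dom g)$. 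Composing with the smooth change of variables and the $C^1$ forward step closes the argument. The most delicate technical point will be verifying that $\RDY(\bar x,\bar\gamma) \in \inte(\dom g)$ on some neighbourhood of $(\bar x,\bar\gamma)$; I would extract this from the optimality inclusion $(\bar x - \bar\gamma \nabla F(\bar x) - y)/\bar\gamma \in \partial g(y)$ combined with local boundedness of $\partial g$ on $\inte(\dom g)$, shrinking the neighbourhood as needed to prevent escape to the boundary.
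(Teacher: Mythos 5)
Your overall strategy --- reduce everything to the forward-backward map for $F=f+h$ and then exploit smoothness of $F$ together with properties of $\prox_{\gamma g}$ --- is exactly the paper's (the proof of the proposition just invokes \eqref{DY-FB-identity-2} and the four auxiliary propositions for $\RFB$). For (i) your composition argument ($1+\gamma(L_f+L_h)$ for the forward step times $(1-\gamma\rho)^{-1}$ for the prox) gives the stated constant and is equivalent to the paper's direct monotonicity computation. For (ii) you defer the key step to the ``classical'' limit $\prox_{\gamma_k g}(v^k)\to\proj_{\overline{\dom(g)}}(\bar v)$ with a \emph{moving} base point; that is precisely the nontrivial content the paper establishes via epi-convergence of the tilted objectives and \cite[Theorem 7.33]{rockafellar2009variational}, so you would still need to supply that argument rather than cite it. For (iii) your route is genuinely different and more elementary than the paper's (which goes through convergence of the Moreau--Yosida approximation to the minimal-norm subgradient): the direct comparison in the prox objective plus the weak-convexity subgradient inequality cleanly yields $\|y-x\|\le \frac{2\gamma}{1-\gamma\rho}\|s+\nabla F(x)\|$. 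Be aware, though, that your ``limiting argument'' for points where $\partial g(x)=\emptyset$ cannot work: for $g(t)=-\sqrt{t}+\iota_{[0,\infty)}(t)$ one has $\prox_{\gamma g}(0)=(\gamma/2)^{2/3}$, so the $O(\gamma)$ rate genuinely fails off $\dom(\partial g)$. This is a defect of the statement shared by the paper (whose Yosida-approximation step likewise implicitly requires $x\in\dom(\partial g)$), so your proof is no weaker than the original --- just drop the claim that a limiting argument covers the remaining points.

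The genuine gap is in (iv). Your plan hinges on verifying that $\RDY(\bar x,\bar\gamma)$ lies in $\inte(\dom(g))$ and stays there on a neighbourhood, and you propose to extract this from the optimality inclusion and local boundedness of $\partial g$. That intermediate claim is false in general: take $g=\iota_C$ for a closed convex set $C$ and $\bar x\in\inte(C)$ with $\bar x-\bar\gamma\nabla F(\bar x)\notin C$; then $\RDY(\bar x,\bar\gamma)=\proj_C(\bar x-\bar\gamma\nabla F(\bar x))$ lies on the boundary of $\dom(g)$, yet the conclusion of (iv) still holds. So the step you flag as ``delicate'' is actually impossible, and the argument as written does not close. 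The good news is that the interior condition is not needed for your resolvent-perturbation step: the resolvent identity gives $\|J_\lambda(v)-J_\mu(v)\|\le\frac{|\lambda-\mu|}{\lambda}\|v-J_\lambda(v)\|$, and $(v,\lambda)\mapsto\lambda^{-1}\|v-J_\lambda(v)\|$ is locally bounded on $\R^d\times(0,+\infty)$ by nonexpansiveness of $J_\lambda$ and continuity of $\lambda\mapsto J_\lambda(\bar v)$, with no reference to $\dom(g)$ at all. Reworked this way, your approach yields joint local Lipschitz continuity of $(v,\gamma)\mapsto\prox_{\gamma g}(v)$ everywhere, and composing with the jointly locally Lipschitz forward step finishes (iv); this is quite different from (and arguably more transparent than) the paper's proof, which runs the coderivative criterion of \cite[Theorem 9.56(a)]{rockafellar2009variational} on the generalised equation $0\in\nabla f(x)+\partial g(y)+\gamma^{-1}(y-x)$. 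As submitted, however, (iv) rests on a step that would fail, so it needs this repair before the proposal can be accepted.
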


\begin{proof}
Items (i), (ii), (iii), and (iv)
   follow from  \eqref{DY-FB-identity-2} and Proposition~\ref{p:YFB-lipschitz}, Proposition~\ref{RFB-lipschitz-I}, Proposition~\ref{p:RFB-O}, and Proposition~\ref{RFB-locally-Lipschitz} below, respectively.
   \end{proof}

\begin{remark}
    Note that in view of Proposition~\ref{DY-continuity-prop} (ii) and (iv), $\RDY$ can be extended continuously to $\dom(g) \times [0,   \min\{\rho^{-1},(L_f + L_h)^{-1}\})$ (cf. \cite[Proposition 3.4]{atenas2025relocated}).
\end{remark}


We now proceed to prove each statement in Proposition~\ref{DY-continuity-prop} for $h=0$.

\begin{proposition} \label{p:YFB-lipschitz}
Suppose $f: \R^d \to \R$ is $L_f$-smooth, and $g: \R^d \to \R \cup \{+\infty\} $ is proper lower semicontinuous $\rho$-weakly convex. Then, the operator $\RFB(\cdot, \gamma)$ is Lipschitz continuous for  $\gamma \in (0, \rho^{-1})$. More specifically, for all $u_1,u_2 \in \R^d$, \begin{equation*}         \| \RFB(u_1, \gamma) - \RFB(u_2, \gamma) \| \leq \left( \dfrac{1 + \gamma L_f}{1 - \gamma\rho}\right)\| u_1 - u_2 \|.    \end{equation*}
\end{proposition}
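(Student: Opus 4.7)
The plan is to unpack the definition of $\RFB$ and then apply two Lipschitz estimates in sequence, sandwiched by a single triangle inequality. Since the statement is essentially an identity of Lipschitz constants, no deep ingredient is needed beyond what the preliminaries already provide.

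First, I would write
\begin{equation*}
    \RFB(u_1,\gamma) - \RFB(u_2,\gamma) = \prox_{\gamma g}\bigl(u_1 - \gamma \nabla f(u_1)\bigr) - \prox_{\gamma g}\bigl(u_2 - \gamma \nabla f(u_2)\bigr),
\end{equation*}
using the definition \eqref{FB-env-sol} with $h=0$. Since $g$ is proper, lower semicontinuous and $\rho$-weakly convex, the discussion in the preliminaries guarantees that for $\gamma \in (0, \rho^{-1})$ the operator $\prox_{\gamma g}$ is single-valued and Lipschitz continuous with constant $\frac{1}{1-\gamma\rho}$. Applying this directly to the previous identity yields
\begin{equation*}
    \|\RFB(u_1,\gamma) - \RFB(u_2,\gamma)\| \le \frac{1}{1-\gamma\rho}\bigl\|(u_1 - u_2) - \gamma \bigl(\nabla f(u_1) - \nabla f(u_2)\bigr)\bigr\|.
\end{equation*}

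Next, I would invoke the triangle inequality on the right-hand side and use the $L_f$-smoothness of $f$, which gives $\|\nabla f(u_1) - \nabla f(u_2)\| \le L_f\|u_1-u_2\|$. Combining these two estimates produces
\begin{equation*}
    \|\RFB(u_1,\gamma) - \RFB(u_2,\gamma)\| \le \frac{1}{1-\gamma\rho}\bigl(\|u_1 - u_2\| + \gamma L_f \|u_1 - u_2\|\bigr) = \frac{1 + \gamma L_f}{1-\gamma\rho}\|u_1 - u_2\|,
\end{equation*}
which is the claimed bound.

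There is no substantial obstacle here: the argument is a standard composition-of-Lipschitz-maps computation, and the only subtlety to flag is the requirement $\gamma \in (0,\rho^{-1})$, which is needed precisely to ensure that $\prox_{\gamma g}$ is single-valued with the stated Lipschitz constant. The same scheme will extend verbatim to the $\RDY$ case used in Proposition~\ref{DY-continuity-prop}(i) by replacing $f$ with $f+h$ and noting that $f+h$ is $(L_f+L_h)$-smooth.
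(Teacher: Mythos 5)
Your proof is correct and reaches the stated constant, but it is organised differently from the paper's. The paper argues directly from the first-order optimality condition of \eqref{FB-env-sol}: it writes the condition as an inclusion for the monotone operator $\partial g(\cdot)+\rho I$, subtracts the two inclusions, and applies monotonicity plus Cauchy--Schwarz and the Lipschitz continuity of $\nabla f$ in a single inner-product estimate, obtaining $(\gamma^{-1}-\rho)\|\RFB(u_1,\gamma)-\RFB(u_2,\gamma)\|^2 \le (\gamma^{-1}+L_f)\|\RFB(u_1,\gamma)-\RFB(u_2,\gamma)\|\,\|u_1-u_2\|$. You instead factor $\RFB(\cdot,\gamma)=\prox_{\gamma g}\circ(I-\gamma\nabla f)$ and compose the Lipschitz constant $\frac{1}{1-\gamma\rho}$ of $\prox_{\gamma g}$ with the bound $1+\gamma L_f$ on the forward step obtained by the triangle inequality. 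The two computations are algebraically equivalent (your route essentially re-derives the paper's inner-product estimate in two modular steps), so nothing is lost; your version is slightly more reusable, the paper's is self-contained and does not lean on an external prox-Lipschitz lemma.

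One small point to be careful about: the preliminaries of the paper state the Lipschitz constant of $\prox_{\gamma\phi}$ for a $\rho$-weakly convex $\phi$ as $\frac{\rho}{\rho-\gamma}$, not the $\frac{1}{1-\gamma\rho}$ you quote. The value you use is the correct one (it follows from the same monotonicity argument with $f=0$, and is what \cite[Proposition 12.19]{rockafellar2009variational} yields; the constant printed in the preliminaries appears to be a typo for $\frac{\rho^{-1}}{\rho^{-1}-\gamma}$), and it is exactly what your composition needs to produce the factor $\frac{1}{1-\gamma\rho}$. If you cite the preliminaries verbatim you should either correct the constant or derive it inline as the paper's proof effectively does.
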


\begin{proof}
From the first-order optimality condition of \eqref{FB-env-sol}, for $i=1,2$, we have: \begin{equation*}\gamma^{-1}(u_i - \gamma \nabla f(u_i)) + (\rho - \gamma^{-1})\RFB(u_i, \gamma) \in \Big( \partial g (\cdot) + \rho I\Big)\Big(\RFB(u_i, \gamma)\Big).\end{equation*} Since $\partial g (\cdot) + \rho I$ is  monotone, then \begin{equation*}\begin{array}{rcl}0 & \le & \langle \RFB(u_1, \gamma)  - \RFB(u_2, \gamma), \gamma^{-1}\big(u_1 -  \gamma \nabla f(u_1) \big) - \gamma^{-1}\big( u_2  - \nabla f(u_2) \big) \rangle \\ & & \quad + (\rho - \gamma^{-1})\langle \RFB(u_1, \gamma)  - \RFB(u_2, \gamma), \RFB(u_1, \gamma)  - \RFB(u_2, \gamma) \rangle,\end{array}\end{equation*} which implies \begin{align*}
    (\gamma^{-1} - \rho) \|\RFB(u_1, \gamma)  - \RFB(u_2, \gamma)\|^2  \leq & \gamma^{-1}\langle \RFB(u_1, \gamma)  - \RFB(u_2, \gamma), u_1 - u_2 \rangle \\ & \quad + \: \langle \RFB(u_1, \gamma)  - \RFB(u_2, \gamma), \nabla f(u_2) - \nabla f(u_1) \rangle  \\ \leq  & (\gamma^{-1} + L_f)\|\RFB(u_1, \gamma)  - \RFB(u_2, \gamma)\| \| u_1 - u_2\|,
\end{align*} where the last line follows from Lipschitz continuity of $\nabla f$. Since $\gamma^{-1} > \rho$, then the desired estimate follows.
    
\end{proof}


In the previous section, we examined the behaviour of the DYE as the stepsize approaches zero. In the next result, we investigate the behaviour of $\RFB$ as $\gamma \downarrow 0$ (cf. \cite[Proposition 5]{friedlander2023perspective}).


\begin{proposition} \label{RFB-lipschitz-I}
    Suppose $f: \R^d \to \R$ is $L_f$-smooth, and $g: \R^d \to \R \cup \{+\infty\}$ is proper lower semicontinuous $\rho$-weakly convex, with $\varphi = f+g$. Then, $\RFB(u^k, \gamma_k) \to \proj_{\overline{\dom(\varphi)}}(\bar{u})$ whenever $u^k \to \bar{u}$ and $\gamma_k \downarrow 0$. \end{proposition}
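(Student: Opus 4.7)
The plan is to recast $\RFB(u^k, \gamma_k) = \prox_{\gamma_k g}(v^k)$ with $v^k := u^k - \gamma_k \nabla f(u^k)$, and then exploit the classical behaviour of the proximal operator as its parameter vanishes. Continuity of $\nabla f$ and $u^k \to \bar u$ immediately force $v^k \to \bar u$, so writing $x^k := \prox_{\gamma_k g}(v^k)$, and noting $\dom(\varphi) = \dom(g)$ since $f$ is finite-valued, the goal reduces to $x^k \to \proj_{\overline{\dom(g)}}(\bar u)$. The argument breaks naturally into two pieces: first establishing boundedness of $(x^k)$, and then identifying every cluster point as the (unique) projection.

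For the boundedness step, I would fix any $\bar y \in \dom(g)$ and multiply the defining prox inequality by $\gamma_k$ to obtain
\begin{equation*}
    \gamma_k g(x^k) + \tfrac{1}{2}\|x^k - v^k\|^2 \leq \gamma_k g(\bar y) + \tfrac{1}{2}\|\bar y - v^k\|^2.
\end{equation*}
Since $g$ is $\rho$-weakly convex and thus prox-bounded with threshold $\rho^{-1}$, fixing any $\lambda \in (0, \rho^{-1})$ yields a constant $M_\lambda$ with $g(x) \geq -\tfrac{1}{2\lambda}\|x\|^2 - M_\lambda$ for all $x \in \R^d$. Combining this with $\|x^k\|^2 \leq 2\|x^k - v^k\|^2 + 2\|v^k\|^2$ turns the displayed inequality into
\begin{equation*}
    \bigl(\tfrac{1}{2} - \tfrac{\gamma_k}{\lambda}\bigr)\|x^k - v^k\|^2 \leq \gamma_k\bigl(g(\bar y) + M_\lambda\bigr) + \tfrac{\gamma_k}{\lambda}\|v^k\|^2 + \tfrac{1}{2}\|\bar y - v^k\|^2.
\end{equation*}
Once $\gamma_k < \lambda/4$ the left-hand coefficient exceeds $1/4$, and boundedness of $(v^k)$ then forces $(x^k)$ to be bounded.

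Once $(x^k)$ is bounded, weak convexity makes $g(x^k)$ bounded below (a proper lower semicontinuous weakly convex function is bounded below on bounded subsets of its domain, via its convex majorant $g + \tfrac{\rho}{2}\|\cdot\|^2$ on the compact closure of the set), so $\gamma_k g(x^k) \to 0$; trivially $\gamma_k g(\bar y) \to 0$. Passing to $\limsup$ in the prox inequality and using $v^k \to \bar u$ yields $\limsup_k \|x^k - \bar u\| \leq \|\bar y - \bar u\|$ for every $\bar y \in \dom(g)$, hence $\limsup_k \|x^k - \bar u\| \leq \dist(\bar u, \overline{\dom(g)})$. Any cluster point $\bar x$ of $(x^k)$ lies in $\overline{\dom(g)}$ (since $x^k \in \dom(g)$) and attains that distance; because weak convexity of $g$ makes $\dom(g)$, and hence its closure, convex, $\bar x$ is the unique metric projection $\proj_{\overline{\dom(\varphi)}}(\bar u)$. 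All cluster points coincide, and the full sequence converges.

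The main obstacle is the boundedness step: because $g$ need not be bounded below globally and $\dom(g)$ may be unbounded, the term $\gamma_k g(x^k)$ in the prox inequality cannot simply be dropped, and one must invoke prox-boundedness in a way compatible with the regime $\gamma_k \downarrow 0$. Once $(x^k)$ is controlled, the remaining steps are a routine epi-convergence-style passage to the limit, aided by the fact that weak convexity of $g$ conveniently yields the convexity of $\dom(g)$ needed for uniqueness of the projection.
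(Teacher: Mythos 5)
Your proof is correct, but it takes a genuinely different route from the paper's. The paper argues via epi-convergence machinery: it writes $\RFB(u^k,\gamma_k)$ as the minimiser of $\phi_k(u) = \gamma_k\big(f(u^k) + \langle\nabla f(u^k), u-u^k\rangle + g(u)\big) + \tfrac12\|u-u^k\|^2$, shows $\phi_k \toe \iota_{\overline{\dom\varphi}} + \tfrac12\|\cdot-\bar u\|^2$ by combining continuous convergence of the quadratic part with a cited fact that $\gamma_k\big(g+\tfrac{\rho}{2}\|\cdot\|^2\big)$ epi-converges to the indicator of $\overline{\dom\varphi}$, and then invokes \cite[Theorem~7.33]{rockafellar2009variational} (convergence of minimisers under epi-convergence plus eventual level-boundedness). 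You instead run a direct two-point argument on the prox inequality at $v^k = u^k - \gamma_k\nabla f(u^k)$: prox-boundedness gives boundedness of the prox points, the comparison with an arbitrary $\bar y \in \dom(g)$ gives $\limsup_k\|x^k-\bar u\| \le \dist(\bar u, \overline{\dom(g)})$, and convexity of $\dom(g)$ (inherited from convexity of $g+\tfrac{\rho}{2}\|\cdot\|^2$) pins every cluster point down as the unique projection. Your version is elementary and self-contained, at the cost of more hands-on estimates; the paper's is shorter on the page but outsources the work to external epi-convergence results. One small polish point: you do not actually need $\gamma_k g(x^k)\to 0$, only the one-sided bound $\gamma_k g(x^k) \ge -\gamma_k C$ coming from the affine minorant of $g+\tfrac{\rho}{2}\|\cdot\|^2$ on the bounded set containing $(x^k)$; the upper control on $\gamma_k g(x^k)$ is never used, so stating just the $\liminf \ge 0$ inequality would be cleaner and avoid having to justify an upper bound.
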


\begin{proof}


    Consider any $\gamma_k \downarrow 0$, and $u^k \to \bar{u}$. Define \begin{align*}
        \phi_k(u) &:= \gamma_k\big( f(u^k) + \langle \nabla f(u^k), u - u^k \rangle + g(u) \big) + \dfrac{1}{2}\| u - u^k\|^2\\
       & =  \gamma_k\big( f(u^k) + \langle \nabla f(u^k), u - u^k \rangle \big) + \dfrac{1}{2}(1- \rho\gamma_k)\| u\|^2 - \left< u^k,u\right> + \frac{1}{2}\|u^k\|^2 \\ & \qquad + \gamma_k\big( g(u) + \dfrac{\rho}{2}\| u \|^2\big). 
    \end{align*} Observe that $\RFB(u^k, \gamma_k) = \argmin \phi_k$.  Next, note that \[ \gamma_k\big( f(u^k) + \langle \nabla f(u^k), \cdot - u^k \rangle \big) + \dfrac{1}{2}(1- \rho\gamma_k)\| \cdot\|^2 - \left< u^k,\cdot\right> + \frac{1}{2}\|u^k\|^2 \toc \frac{1}{2}\| \cdot - \bar{u}\|^2, \] where $\toc$ denotes continuous convergence of maps \cite[Definition 5.41]{rockafellar2009variational}. Furthermore, from \cite[Proposition~4(b)]{friedlander2023perspective}), \begin{equation*}
        \gamma_k\big( g(\cdot) + \dfrac{\rho}{2}\| \cdot \|^2\big) \toe \iota_{\overline{\dom(\varphi)}}(\cdot).
    \end{equation*}   
    Therefore, in view of \cite[Theorem~7.46(b)]{rockafellar2009variational}, \[\phi_k \toe \phi := \iota_{\overline{\dom(\varphi)}}(\cdot) + \dfrac{1}{2}\| \cdot - \bar{u} \|^2. \] Since $\phi_k$ is (strongly) convex and $\phi$ is strongly convex, and thus level-bounded, \cite[Exercise 7.32(c)]{rockafellar2009variational} implies that $(\phi_k)_{k\geq1}$ is eventually level-bounded. Hence, \cite[Theorem 7.33]{rockafellar2009variational} implies that \[ \RFB(u^k, \gamma_k) = \argmin \phi_k \to \argmin \phi = \proj_{\overline{\dom(\varphi)}}(\bar{u}).\]

\end{proof}

The result above characterises the behaviour of $\RFB(x,\gamma)$ when both arguments $x$ and $\gamma$ vary. In the following result, for a fixed $x \in \dom(g)$, we establish the speed of convergence of $\RFB(x,\gamma)$ to $x$ as $\gamma \downarrow 0$. 

\begin{proposition} \label{p:RFB-O}
    Suppose $f: \R^d \to \R$ is $L_f$-smooth, and $g: \R^d \to \R \cup \{+\infty\}$ is proper lower semicontinuous $\rho$-weakly convex, with $\varphi = f +g$. Then, for any $x \in \dom(g)$, $\|\RFB(x,\gamma_k) - x\| = O(\gamma)$ as $\gamma \downarrow 0$.

\end{proposition}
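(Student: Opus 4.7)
The plan is to squeeze $\|y_\gamma - x\|^2$ from both sides by exploiting that $y_\gamma := \RFB(x,\gamma)$ is the unique minimiser of the $(\gamma^{-1}-\rho)$-strongly convex proximal subproblem
\[F_\gamma(y) := g(y) + \tfrac{1}{2\gamma}\|y - x + \gamma \nabla f(x)\|^2, \qquad \gamma \in (0, \rho^{-1}).\]
First, I would compare $F_\gamma(y_\gamma) \le F_\gamma(x)$, which is admissible since $x \in \dom(g)$, and expand the quadratic to obtain the upper bound
\[g(y_\gamma) - g(x) + \tfrac{1}{2\gamma}\|y_\gamma - x\|^2 + \langle \nabla f(x), y_\gamma - x\rangle \le 0.\]
Second, to complement this with a matching lower bound on $g(y_\gamma) - g(x)$, I would invoke $\rho$-weak convexity of $g$ at $x$: for any $s \in \partial g(x)$,
\[g(y_\gamma) - g(x) \ge \langle s, y_\gamma - x\rangle - \tfrac{\rho}{2}\|y_\gamma - x\|^2.\]
Substituting this into the first inequality and applying Cauchy--Schwarz yields
\[\tfrac{1 - \gamma\rho}{2\gamma}\|y_\gamma - x\|^2 \le -\langle s + \nabla f(x), y_\gamma - x\rangle \le \|s + \nabla f(x)\|\,\|y_\gamma - x\|,\]
so that $\|y_\gamma - x\| \le \frac{2\gamma}{1 - \gamma\rho}\|s + \nabla f(x)\| = O(\gamma)$ as $\gamma \downarrow 0$, with implicit constant depending on $x$.

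The main obstacle is that the quantitative rate is extracted through a subgradient of $g$ at $x$, so the argument tacitly relies on $\partial g(x) \neq \emptyset$, i.e.\ on $x \in \dom(\partial g)$. For weakly convex lower semicontinuous $g$ this inclusion can be strict within $\dom(g)$ (for instance, the convex function $g(y) = -\sqrt{y}$ on $[0,+\infty)$ at $y = 0$), and at such boundary points the rate of $\|\prox_{\gamma g}(x) - x\|$ can be strictly slower than $O(\gamma)$. Accordingly, the statement is most naturally understood for $x \in \dom(\partial g)$, consistent with the convex counterpart invoked from \cite[Proposition 5]{friedlander2023perspective}; in that case the hidden constant is controlled by $\dist(-\nabla f(x), \partial g(x))$, and the bound is uniform in $\gamma$ on any interval $(0, \bar\gamma] \subset (0, \rho^{-1})$ bounded away from $\rho^{-1}$.
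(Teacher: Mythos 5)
Your proof is correct and takes a genuinely different, more elementary route than the paper. The paper splits $\RFB(x,\gamma)-x$ into $\big(\RFB(x,\gamma)-\prox_{\gamma g}(x)\big)+\big(\prox_{\gamma g}(x)-x\big)$, bounds the first piece by Lipschitz continuity of $\prox_{\gamma g}$ (giving $\tfrac{\rho}{1-\gamma\rho}\|\nabla f(x)\|$ after dividing by $\gamma$), and handles the second by rewriting $\prox_{\gamma g}(x)$ as the resolvent of the maximally monotone operator $A_x=\partial\big(g+\tfrac{\rho}{2}\|\cdot-x\|^2\big)$ and invoking the classical convergence of the Yosida approximation $\tfrac{1}{\lambda}(x-J_\lambda x)$ to the minimal-norm element of $A_x(x)$ as $\lambda\downarrow 0$. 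Your argument dispenses with both the decomposition and the Yosida machinery: comparing the prox objective at $y_\gamma$ and at $x$ and inserting the global weak-convexity subgradient inequality gives the bound in one step, with the explicit constant $\tfrac{2\gamma}{1-\gamma\rho}\|s+\nabla f(x)\|$, which can be sharpened to $\tfrac{2\gamma}{1-\gamma\rho}\dist\big(-\nabla f(x),\partial g(x)\big)$ by minimising over $s$. That is a cleaner and more quantitative derivation.

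Your caveat about $x\in\dom(\partial g)$ is well taken, and it is not a defect of your proof alone: the paper's own argument tacitly requires the same thing, since ``the minimal norm subgradient in $A_x(u)$'' only exists when $A_x(u)\neq\emptyset$, i.e.\ when $x\in\dom(\partial g)$; for $x\in\overline{\dom(A_x)}\setminus\dom(A_x)$ the Yosida approximation blows up in norm. Your example $g(y)=-\sqrt{y}+\iota_{[0,\infty)}(y)$ with $f=0$ and $x=0$, where $\prox_{\gamma g}(0)=(\gamma/2)^{2/3}$, confirms that the $O(\gamma)$ rate genuinely fails there, so the proposition should be read with $\dom(g)$ replaced by $\dom(\partial g)$ (or the conclusion weakened to $o(1)$, which is all that Proposition~\ref{RFB-lipschitz-I} delivers on the rest of $\dom(g)$).
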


\begin{proof}
We first show the property for the proximal operator of $g$. Fix $x \in \dom(g)$. For all $\gamma \in (0, \rho^{-1})$, define $g_{\rho, x}(\cdot) = g(\cdot) + \frac{\rho}{2}\|\cdot - x\|^2$, in such a way that $\prox_{\gamma g}(x) = \prox_{(\gamma^{-1} - \rho)^{-1}g_{\rho, x}}(x)$. Define $A_x = \partial g_{\rho, x}$, which is maximally monotone as $g_{\rho, x}$ is convex. Since $(\gamma^{-1} - \rho)^{-1} = \frac{\gamma}{1-\rho\gamma} \downarrow 0$ as $\gamma \downarrow 0$, then in view of \cite[Remark 3.32]{attouch1984variational}, for $u \in \dom(g)$, the \emph{Moreau-Yosida approximation} of $A_x$ at $u$ with parameter $(\gamma^{-1}-\rho)^{-1}>0$, namely, \begin{equation*}
    \frac{1}{(\gamma^{-1}-\rho^{-1})^{-1}}(u - \prox_{(\gamma^{-1} - \rho)^{-1}g_{\rho, x}}(u)), 
\end{equation*}  converges to the minimal norm subgradient in $A_x(u)$. Furthermore, for $u = x$, the Moreau-Yosida approximation above coincides with $\nabla (g_{\rho, x})^M_{\gamma^{-1}-\rho^{-1}}(x)$ in view of \eqref{eq:moreau-grad}, which is thus bounded as $\gamma \downarrow 0$. Moreover, since $x - \prox_{\gamma g}(x)$ is bounded as $\gamma \downarrow 0$ (take $f = 0$ and $u^k = x$ in Proposition~\ref{RFB-lipschitz-I}), then   \begin{equation*}
        \dfrac{1}{\gamma}\big(x - \prox_{\gamma g}(x)\big) = \nabla (g_{\rho,x})_{(\gamma^{-1} - \rho)^{-1}}(x) + \rho^{-1}\big(x - \prox_{\gamma g}(x)\big)    \end{equation*} is also bounded as $\gamma \downarrow 0$. Hence $\|\prox_{\gamma g}(x) - x\| = O(\gamma)$. 
    Next, since \begin{equation*}
        \dfrac{1}{\gamma}\big(\RFB(x, \gamma ) - x\big) = \dfrac{1}{\gamma}\big(\RFB(x, \gamma) - \prox_{\gamma g}(x)\big) + \dfrac{1}{\gamma}\big(\prox_{\gamma g}(x) - x\big),
    \end{equation*} it is sufficient to prove that $\dfrac{1}{\gamma}\big(\RFB(x, \gamma) - \prox_{\gamma g}(x)\big)$ remains bounded as $\gamma \downarrow 0$ to show the desired property of the forward-backward map. In view of Lipschitz continuity of $\prox_{\gamma g}$, it holds that \begin{equation*}
        \begin{array}{rcl}
           \dfrac{1}{\gamma}\| \RFB(x, \gamma) - \prox_{\gamma g}(x) \| & = & \dfrac{1}{\gamma}\| \prox_{\gamma g}\big(x - \gamma \nabla f(x)\big) - \prox_{\gamma g}(x) \|\\
           & \le & \dfrac{1}{\gamma}\left( \dfrac{\rho}{1-\gamma\rho}\right) \| x - \gamma \nabla f(x) - x \| \\
             & = & \left( \dfrac{\rho}{1-\gamma\rho}\right) \|  \nabla f(x)\|.
        \end{array}
    \end{equation*} This concludes the proof. 
    
      
\end{proof}






To close this section, we show that as a function of the joint variable $(x,\gamma)$, the operator $\RFB(x,\gamma)$ is  locally Lipschitz continuous (cf. \cite[Theorem 3 (b)]{friedlander2023perspective} and \cite[Proposition 3.4]{atenas2025relocated}).

\begin{proposition} \label{RFB-locally-Lipschitz}
    Suppose $f: \R^d \to \R$ is  $L_f$-smooth and a $C^2$ function, and $g: \R^d \to \R \cup \{+\infty\}$ is proper lower semicontinuous $\rho$-weakly convex. Then, $\R^d \times \R_{++} \ni (x,\gamma) \mapsto \RFB(x,\gamma)$ is a locally Lipschitz continuous function at any $(\bar{x},\bar{\gamma}) \in \inte\big(\dom(g)\big) \times (0,  \rho^{-1})$. 
\end{proposition}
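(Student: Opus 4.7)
The plan is to decouple the variations in $x$ and $\gamma$ by the triangle inequality
\begin{equation*}
\|\RFB(x_1,\gamma_1)-\RFB(x_2,\gamma_2)\|\leq\|\RFB(x_1,\gamma_1)-\RFB(x_2,\gamma_1)\|+\|\RFB(x_2,\gamma_1)-\RFB(x_2,\gamma_2)\|.
\end{equation*}
The first term (variation in $x$ with $\gamma$ fixed) is already controlled by Proposition~\ref{p:YFB-lipschitz}, whose Lipschitz constant $\frac{1+\gamma L_f}{1-\gamma\rho}$ stays uniformly bounded when $\gamma$ ranges in any compact subinterval of $(0,\rho^{-1})$ around $\bar\gamma$. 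Hence only the variation in $\gamma$ at fixed $x$ requires a new estimate, and this is where the real work lies.

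For that step, my approach is to exploit monotonicity of $\partial\tilde g$, where $\tilde g:=g+\frac{\rho}{2}\|\cdot\|^2$ is the convex function attached to the $\rho$-weakly convex $g$. Setting $u_i:=x-\gamma_i\nabla f(x)$ and $p_i:=\prox_{\gamma_ig}(u_i)$, the first-order optimality condition for the prox gives $\frac{u_i-p_i}{\gamma_i}\in\partial g(p_i)$, and thus $\frac{u_i-p_i}{\gamma_i}+\rho p_i\in\partial\tilde g(p_i)$. Combining monotonicity of $\partial\tilde g$ with the algebraic identity
\begin{equation*}
\frac{u_1-p_1}{\gamma_1}-\frac{u_2-p_2}{\gamma_2}=\frac{u_1-u_2}{\gamma_1}-\frac{p_1-p_2}{\gamma_1}+\frac{\gamma_2-\gamma_1}{\gamma_1\gamma_2}(u_2-p_2)
\end{equation*}
and Cauchy--Schwarz, and using the strict inequality $\frac{1}{\gamma_1}-\rho>0$ (guaranteed by $\bar\gamma<\rho^{-1}$) to absorb the resulting $\|p_1-p_2\|^2$ term, I obtain
\begin{equation*}
\|p_1-p_2\|\leq\frac{|\gamma_1-\gamma_2|}{1-\rho\gamma_1}\left(\|\nabla f(x)\|+\frac{\|u_2-p_2\|}{\gamma_2}\right),
\end{equation*}
after noting that $\|u_1-u_2\|=|\gamma_1-\gamma_2|\,\|\nabla f(x)\|$.

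To finish, I verify that on a compact neighborhood of $(\bar x,\bar\gamma)$ the right-hand side above is uniformly bounded by $C\,|\gamma_1-\gamma_2|$: Lipschitz continuity of $\nabla f$ controls $\|\nabla f(x)\|$; the assumption $\bar\gamma\in(0,\rho^{-1})$ keeps $\gamma$ bounded away from both $0$ and $\rho^{-1}$; and $\|u_2-p_2\|$ remains bounded since $u_2$ is a continuous function of $(x,\gamma)$ and $\prox_{\gamma g}$ is Lipschitz continuous in its first argument with a constant uniformly bounded over the chosen compact range of $\gamma$. Combining this bound with the $x$-variation bound from Proposition~\ref{p:YFB-lipschitz} yields the joint local Lipschitz continuity. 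The main difficulty is the algebraic manipulation of the two subdifferential inclusions so that the cross-term $(u_2-p_2)$ is cleanly separated from the terms in $p_1-p_2$, while retaining the strict positivity $\frac{1}{\gamma_1}-\rho>0$ coming from weak convexity; once this estimate is in place, the remainder is a straightforward compactness-and-continuity argument.
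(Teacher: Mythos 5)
Your argument is correct, but it follows a genuinely different route from the paper. The paper proves this proposition via coderivative calculus: it writes the generalised equation $S(x,\gamma,y):=\nabla f(x)+\partial g(y)+\gamma^{-1}(y-x)\ni 0$, computes $D^*S$ using the sum rule, verifies the coderivative injectivity condition with the help of \cite[Lemma 3]{friedlander2023perspective}, and invokes the implicit-mapping theorem \cite[Theorem 9.56(a)]{rockafellar2009variational} to obtain the Aubin property, hence local Lipschitz continuity of the single-valued map. That route is why the paper assumes $f\in C^2$: the Jacobian $\nabla^2 f(\bar x)$ enters $\nabla F$ in the coderivative computation. Your decomposition into an $x$-variation (already covered by Proposition~\ref{p:YFB-lipschitz}, with constant uniform over compact subintervals of $(0,\rho^{-1})$) plus a $\gamma$-variation handled by monotonicity of $\partial(g+\tfrac{\rho}{2}\|\cdot\|^2)$ is elementary, yields an explicit local Lipschitz constant, and in fact uses neither the $C^2$ hypothesis on $f$ nor the interiority condition $\bar x\in\inte(\dom(g))$ --- so you prove a slightly stronger statement; the coderivative machinery, by contrast, is the natural tool if one later needs the Aubin property for genuinely set-valued solution maps. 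The one place you gloss is the uniform boundedness of $\|u_2-p_2\|$ over a compact neighbourhood of $(\bar x,\bar\gamma)$: invoking Lipschitz continuity of $\prox_{\gamma g}$ in $u$ alone does not control the dependence on $\gamma$ of a reference value such as $\prox_{\gamma g}(u_0)$. This is easily repaired --- compare the prox objective value at $p_2$ with its value at a fixed $y_0\in\dom(g)$ and use an affine minorant of $g+\tfrac{\rho}{2}\|\cdot\|^2$ together with $\gamma\le\gamma_+<\rho^{-1}$ to force $\|p_2\|$ bounded --- but it should be said.
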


\begin{proof}
The result will be a consequence of an implicit map theorem \cite[Theorem 9.56 (a)]{rockafellar2009variational}, as in the proof of  \cite[Theorem 3 (b)]{friedlander2023perspective}.  Given $(\bar{x},\bar{\gamma})\in \R^d \times \R_{++}$ , let $\bar{y} = \RFB(\bar{x},\bar{\gamma})$. Define the set-valued map \begin{equation*}
    S(x,\gamma,y) := \nabla f(x) + \partial g(y) + \gamma^{-1}(y-x).
\end{equation*} We study local continuity properties of the generalised equation $S(x,\gamma,y)\ni 0$ with parameter $(x,\gamma)$ and variable $y$, with solution map given by $\RFB(x, \gamma) = \{ y \in \R^d : S(x,\gamma,y) \ni 0 \}$. Let \begin{align*}
    &S_0(x,\gamma,y) := \partial g(y) + \rho(y-x), \\
    &F(x,\gamma,y) := \nabla f(x) + L_f(x-y) + (\gamma^{-1} - \rho + L_f)(y-x),\\
    & \text{so that~}S = S_0  + F.
\end{align*} From \cite[Exercise 10.43(b)]{rockafellar2009variational}, \[D^* S(\bar{x},\bar{\gamma},\bar{y} | 0)(y) = D^* S_0(\bar{x},\bar{\gamma},\bar{y} | -F(\bar{x},\bar{\gamma},\bar{y}))(y) + \nabla F(\bar{x},\bar{\gamma},\bar{y})y, \] where \begin{align*}
    &D^* S_0(\bar{x},\bar{\gamma},\bar{y} | -F(\bar{x},\bar{\gamma},\bar{y}))(y) = \{-\rho y\} \times \{ 0 \} \times D^*(\partial g(\cdot) + \rho I)(\bar{y} | -F(\bar{x},\bar{\gamma},\bar{y}))(y),\\
    &\nabla F(\bar{x},\bar{\gamma},\bar{y})y = \left( \nabla^2 f(\bar x)y - (\gamma^{-1} - \rho)y, \bar{\gamma}^{-2}\langle \bar{x} - \bar{y}, y\rangle, (\gamma^{ -1} - \rho)y \right).
\end{align*} In order to apply \cite[Theorem 9.56 (a)]{rockafellar2009variational}, we need to prove that if $(u, \gamma, 0) \in D^* S(\bar{x},\bar{\gamma},\bar{y} | 0)(y)$, then $y = u = 0 $ and $\gamma = 0$. Let $(u, \gamma, 0) \in D^* S(\bar{x},\bar{\gamma},\bar{y} | 0)(y)$, which is equivalent to \[\left\{\begin{array}{rcl}
    u & = & \nabla^2 f(\bar x) y - \bar{\gamma}^{-1}y \\
    \gamma & = & \bar{\gamma}^{-2}\langle \bar{x} - \bar{y}, y\rangle \\
    - (\gamma^{-1} - \rho)y & \in & D^*(\partial g(\cdot) + \rho I)(\bar{y} | -F(\bar{x},\bar{\gamma},\bar{y}))(y) 
\end{array}\right.\] The third inclusion together with \cite[Lemma 3]{friedlander2023perspective} implies $-(\gamma^{-1}-\rho)\| y \|^2 \ge 0$, hence $y = 0$. The second identity thus implies $\gamma = 0$, and the first one yields $u = 0$. In view of \cite[Theorem 9.56 (a)]{rockafellar2009variational}, $\RFB$ has the Aubin property at $(\bar x, \bar \gamma)$ for $\bar y$ \cite[Definition 9.36]{rockafellar2009variational}. Since it is a single-valued map,  then $\RFB$ is locally Lipschitz continuous around $(\bar x, \bar \gamma)$.
\end{proof}

In this section, we explored the variational analysis properties behind solution maps of splitting methods in nonconvex optimisation. In the next section, we examine the asymptotic behaviour of the sequences generated by these solution maps.

\section{Damped Davis-Yin splitting method} \label{s:damped-DY}

Now we turn the analysis of splitting methods to convergence via envelopes. It is known that the  DY method in \eqref{DY} asymptotically produces stationary points of $\varphi$ in the nonconvex problem \eqref{DY-problem}, under now standard regularity assumptions, see e.g. \cite{bian2021three,alcantara2024four}. However, these convergence results are not enough in the context of saddle point avoidance. As we shall see in Section~\ref{s:saddle-avoidance}, avoidance of strict saddle points can be guaranteed, almost surely, for a damped version of the method in \eqref{DY}. 
This section is devoted to the convergence analysis of this variant of the method using the shadow sequence.

\subsection{The shadow sequence}

In this section, we first show that the method in \eqref{DY} is equivalent to Algorithm~\ref{a:damped-DY} when $\alpha = 1$. As mentioned above, the shadow DY sequence is the input of the DYE in \eqref{DY-env-x}. We start by introducing the associated iteration operators.

\begin{proposition}[Iteration operators] \label{OC-operators}

Let Assumption~\ref{a:ass1} hold.  Let $\rho_f \geq 0$ be the constant of weak convexity of $f$,  $\gamma \in (0, \min\{\gamma_g, \rho_f^{-1}\})$, \begin{equation*}
    Y_{\gamma} :=  \RDY,
\end{equation*} and $x \in \R^d$. Consider the shadow DY iteration operator \begin{equation} \label{DY:T-operator}
    T_{\gamma}(x)  :=  \prox_{\gamma f}\big( (1-\lambda)x +  \gamma \nabla f(x) + \lambda Y_{\gamma}(x) \big).
\end{equation} Then, 
\begin{equation} \label{T-OC}
    \gamma^{-1}\big( (1-\lambda)x + \gamma \nabla f(x) + \lambda Y_{\gamma}(x) - T_{\gamma}(x) \big) = \nabla f\big(T_{\gamma}(x) \big).
\end{equation}
    
\end{proposition}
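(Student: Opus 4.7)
The plan is to recognise that identity \eqref{T-OC} is nothing other than the first-order optimality condition for the proximal subproblem defining $T_\gamma(x)$, and so the proof amounts to writing down Fermat's rule carefully, checking that all hypotheses for its application as an equality are met.

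First I would confirm that $T_\gamma(x)$ is a well-defined single-valued object. Under Assumption~\ref{a:ass1}(i), $f$ is $L_f$-smooth, hence $\rho_f$-weakly convex for some $\rho_f \in [0, L_f]$ (this is exactly the preliminary fact cited from \cite{atenas2023unified}). Since the hypothesis imposes $\gamma < \rho_f^{-1}$, the facts recalled in Section~\ref{s:prelim} give that $\prox_{\gamma f}$ is single-valued and Lipschitz continuous, so the right-hand side of \eqref{DY:T-operator} is unambiguously defined once we set
$$u := (1-\lambda)x + \gamma \nabla f(x) + \lambda Y_\gamma(x).$$

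Next, I would invoke Fermat's rule for the strongly convex minimisation problem
$$T_\gamma(x) = \prox_{\gamma f}(u) = \argmin_{y \in \R^d}\left\{ f(y) + \frac{1}{2\gamma}\|y-u\|^2 \right\},$$
whose objective has strong convexity modulus $\gamma^{-1} - \rho_f > 0$. Because $f$ is differentiable, the subdifferential sum rule collapses to a single-valued gradient equation, yielding
$$0 = \nabla f\bigl(T_\gamma(x)\bigr) + \gamma^{-1}\bigl(T_\gamma(x) - u\bigr).$$
Rearranging and substituting back the definition of $u$ produces exactly \eqref{T-OC}.

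There is no genuine obstacle here: the result is a direct rewriting of the standard proximal optimality condition when the inner function is smooth and weakly convex within the admissible stepsize range. The only point worth double-checking is that the stepsize bound $\gamma < \rho_f^{-1}$ (which is what \emph{ensures} the proximal problem is strongly convex and its optimality condition is an equality, not a mere inclusion) is indeed enforced by the hypothesis $\gamma \in (0, \min\{\gamma_g, \rho_f^{-1}\})$.
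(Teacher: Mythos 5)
Your proof is correct and follows the same route as the paper: the paper's own argument is precisely that $\rho_f$-weak convexity of $f$ together with $\gamma<\rho_f^{-1}$ makes $T_\gamma(x)$ well-defined, and that \eqref{T-OC} is the first-order optimality condition of the proximal subproblem in \eqref{DY:T-operator}. Your version merely spells out the Fermat-rule computation that the paper leaves implicit, which is fine.
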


\begin{proof}

Since $f$ is $\rho_f$-weakly convex, then $T_\gamma(x)$ is well-defined, and \eqref{T-OC} corresponds to the first-order optimality condition of the optimisation problem in \eqref{DY:T-operator}. 
\end{proof}

\begin{remark}
    Under the assumptions of Proposition~\ref{OC-operators}, the operator $\R^d \ni x\mapsto Y_\gamma(x)$ is set-valued. Hence, in \eqref{T-OC} (and throughout this section), we abuse the notation and understand $Y_\gamma(x)$ therein as a selection of a proximal point of $\gamma g$ centred at $x - \gamma \nabla (f+h)(x)$ (unless stated otherwise).
\end{remark}

\begin{proposition}[Shadow DY splitting] \label{p:shadow-DY}

Let Assumption~\ref{a:ass1} hold, $\rho_f \geq 0$ be the constant of weak convexity of $f$, and $\gamma \in (0, \min\{\gamma_g, \rho_f^{-1}\})$. Then, the shadow DY sequence $(x_k)_{k\geq 1}$ generated by \eqref{DY} is equivalently determined by
\begin{equation} \label{DY-iteration}
    x^{k+1} = \prox_{\gamma f}\Big( (1-\lambda) x^k + \gamma \nabla f(x^k) + \lambda\prox_{\gamma g}\big(x^k-\gamma \nabla (f+h)(x^k) \big) \Big).
\end{equation} In other words, the shadow sequence  $(x_k)_{k\geq 1}$ generated by \eqref{DY} is the same as the one generated via Algorithm~\ref{a:damped-DY} with $\alpha=1$, whenever initialised at the same point.
    
\end{proposition}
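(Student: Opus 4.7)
The plan is to eliminate the auxiliary fixed-point sequence $(z^k)_{k\ge 1}$ from the iteration \eqref{DY} by using the first-order optimality condition of the proximal step applied to $f$, and then to substitute the resulting identity back into the updates for $y^k$ and $z^{k+1}$, so that the whole recursion collapses onto the shadow variable $x^k$.

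Concretely, I would first note that since $\gamma \in (0, \rho_f^{-1})$ the operator $\prox_{\gamma f}$ is single-valued, so $x^k = \prox_{\gamma f}(z^k)$ is uniquely determined. Because $f$ is $L_f$-smooth we have $\partial f = \{\nabla f\}$, and the optimality condition of \eqref{eq:prox} at $x^k$ reads $z^k = x^k + \gamma \nabla f(x^k)$. Plugging this into the argument of $\prox_{\gamma g}$ in the second line of \eqref{DY} yields
\[
2x^k - z^k - \gamma \nabla h(x^k) \;=\; x^k - \gamma \nabla f(x^k) - \gamma \nabla h(x^k) \;=\; x^k - \gamma \nabla(f+h)(x^k),
\]
so $y^k = \prox_{\gamma g}\big(x^k - \gamma \nabla(f+h)(x^k)\big) = Y_{\gamma}(x^k)$. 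Using the same identity in the third line of \eqref{DY}, $z^{k+1} = z^k + \lambda(y^k - x^k) = (1-\lambda)x^k + \gamma \nabla f(x^k) + \lambda y^k$. Applying $\prox_{\gamma f}$ to both sides then gives exactly \eqref{DY-iteration}, which is the update performed by Algorithm~\ref{a:damped-DY} with $\alpha = 1$. The phrase ``initialised at the same point'' in the statement is to be interpreted as selecting the starting iterate $x^0$ of Algorithm~\ref{a:damped-DY} equal to $\prox_{\gamma f}(z^0)$, where $z^0$ is the initialisation of \eqref{DY}; with this identification the two shadow sequences coincide term by term by induction on $k$.

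There is no substantial obstacle: the derivation is a direct algebraic manipulation once well-posedness of $\prox_{\gamma f}$ has been secured, which is precisely why the stepsize restriction $\gamma < \rho_f^{-1}$ appears in the hypotheses. The mild care needed is ensuring that we are entitled to pass from the subdifferential inclusion in the optimality condition to the closed-form identity $z^k = x^k + \gamma \nabla f(x^k)$, and this is immediate from smoothness of $f$.
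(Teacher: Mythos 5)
Your argument is correct and is essentially identical to the paper's own proof: both derive $z^k = x^k + \gamma\nabla f(x^k)$ from the optimality condition of the $f$-proximal step, substitute it into the $g$-proximal step and the $z$-update, and then apply $\prox_{\gamma f}$ to obtain \eqref{DY-iteration}. The extra remarks on single-valuedness of $\prox_{\gamma f}$ and on matching the initialisations are sound and only make the argument more explicit.
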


\begin{proof}

From the optimality condition of the $f$-proximal step in \eqref{DY}, it holds that $z^k = x^k + \gamma \nabla f(x^k)$. Substituting this identity in the $g$-proximal step in \eqref{DY} yields $y^k = \prox_{\gamma g}\big(x^k - \gamma \nabla (f+h)(x^k)\big)$. Therefore, the third step in \eqref{DY} is equivalent to \begin{equation*}
    z^{k+1} =  \gamma \nabla f(x^k) + (1-\lambda) x^k +  \lambda\prox_{\gamma g}\big(x^k - \gamma \nabla (f+h)(x^k)\big).
\end{equation*}  Using this identity and the fact that $x^{k+1}  =  \prox_{\gamma f}(z^{k+1})$, we conclude \eqref{DY-iteration}.
    
\end{proof}

The reformulation in Proposition~\ref{p:shadow-DY} is not only convenient for the convergence analysis, but also to characterise stationary points of $\varphi$ in terms of fixed-points of the show iteration operator.

\begin{proposition}[Davis-Yin splitting: stationary points are fixed points] \label{DY:critical-characterization-I}

    Let Assumption~\ref{a:ass1} hold, and $\rho_f \geq 0$ be the constant of weak convexity of $f$.  Then, for any $\gamma \in (0,  \min\{\gamma_g, \rho_f^{-1}\})$, the following statements are equivalent. 
    
    \begin{enumerate}
        \item[(i)] $\bar{x}$ is a fixed-point of $T_{\gamma}$. 
        \item[(ii)] $\bar{x}$ is a fixed-point of $Y_{\gamma}$. 
        \item[(iii)] $\bar{x}$ is a starionary point of $\varphi = f+g+h$.
    \end{enumerate}  
   
\end{proposition}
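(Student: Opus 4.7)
The plan is to establish the three-way equivalence by proving (i) $\Leftrightarrow$ (ii) and (ii) $\Leftrightarrow$ (iii) in turn. In each bi-implication I would exploit the first-order optimality of an underlying proximal subproblem, together with the structural identity \eqref{T-OC} and the explicit expression $Y_\gamma(x) = \prox_{\gamma g}(x - \gamma \nabla (f+h)(x))$. The weak convexity of $f$ with $\gamma < \rho_f^{-1}$ is key to ensuring uniqueness in the $f$-prox subproblem.

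For (i) $\Leftrightarrow$ (ii), I would substitute $T_\gamma(\bar{x}) = \bar{x}$ into \eqref{T-OC}: the $\nabla f(\bar{x})$ terms cancel, the combination $(1-\lambda)\bar{x} - \bar{x}$ simplifies to $-\lambda \bar{x}$, and the remaining relation reduces to $\lambda \gamma^{-1}(Y_\gamma(\bar{x}) - \bar{x}) = 0$, which, since $\lambda>0$, forces $Y_\gamma(\bar{x}) = \bar{x}$. For the converse, assuming $\bar{x} = Y_\gamma(\bar{x})$ collapses the argument of the outer $\prox_{\gamma f}$ in \eqref{DY:T-operator} to $\bar{x} + \gamma \nabla f(\bar{x})$; the subproblem defining this prox, namely $y \mapsto f(y) + \tfrac{1}{2\gamma}\|y - \bar{x} - \gamma \nabla f(\bar{x})\|^2$, is strongly convex because $\gamma \rho_f < 1$, its first-order condition at $y = \bar{x}$ is automatic, and hence $\bar{x}$ is its unique minimiser, giving $T_\gamma(\bar{x}) = \bar{x}$.

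For (ii) $\Leftrightarrow$ (iii), the forward direction follows by applying Fermat's rule to the $g$-prox subproblem that defines $Y_\gamma(\bar{x})$: membership $\bar{x} \in \prox_{\gamma g}(\bar{x} - \gamma \nabla (f+h)(\bar{x}))$ yields $0 \in \partial g(\bar{x}) + \nabla (f+h)(\bar{x}) = \partial \varphi(\bar{x})$ via the sum rule \cite[Exercise 8.8(c)]{rockafellar2009variational}. The reverse direction reverses this chain of implications, reading stationarity of $\varphi$ at $\bar{x}$ as the first-order optimality of the $g$-prox subproblem centred at $\bar{x} - \gamma \nabla (f+h)(\bar{x})$. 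Here lies the main obstacle: under mere prox-boundedness of $g$, first-order optimality is necessary but not sufficient for membership in $\argmin$, so one must interpret the fixed-point condition consistently with the selection convention from the Remark following Proposition~\ref{OC-operators}, or implicitly rely on convexity of the $g$-prox subproblem (e.g.\ via weak convexity of $g$ and a small enough stepsize) to close the gap between critical points and minimisers of the subproblem.
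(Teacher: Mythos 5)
Your proof is correct and follows essentially the same route as the paper's: both rest on the first-order optimality conditions of the two proximal subproblems together with the identity \eqref{T-OC}, with $\gamma\rho_f<1$ guaranteeing that the $f$-prox subproblem is strongly convex so that its critical point is its unique minimiser. The obstacle you flag in the direction (iii) $\Rightarrow$ (ii) --- that under mere prox-boundedness of $g$ the first-order condition is necessary but not sufficient for membership in the argmin defining $Y_\gamma(\bar x)$ --- is genuine, but the paper's own proof simply asserts this equivalence without comment, so your explicit acknowledgement (and proposed resolution via the selection convention or weak convexity of $g$) is, if anything, more careful than the original.
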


\begin{proof}
    Let $x \in \R^d$, and define $x^+ = T_{\gamma}(x)$. From Proposition~\ref{OC-operators}, $x^+$ is the unique point satisfying  \begin{equation*}
        \begin{array}{rrl}
             &  \nabla f(x^+) &= \gamma^{-1}\big( (1-\lambda)x + \gamma \nabla f(x) + \lambda Y_{\gamma}(x) - x^+ \big) \\
             \iff &   Y_{\gamma}(x) &= \displaystyle\frac{1}{\lambda}\left(x^+ + \gamma \big( \nabla f(x^+) -\nabla f(x) \big) - (1-\lambda)x\right).
        \end{array}
    \end{equation*} and
    \begin{equation*}
        \dfrac{1}{\gamma}\big(x-\gamma \nabla (f+h)(x) -Y_{\gamma}(x) \big) \in \partial g \big(S_{\gamma}(x)\big)
    \end{equation*}
            If $T_{\gamma}(\bar x) = \bar x$, then \begin{equation*}
                Y_{\gamma}(x) = \frac{1}{\lambda}\left( \bar{x} + \gamma \big( \nabla f(\bar{x}) -\nabla f(\bar{x}) \big) - (1-\lambda)\bar{x}\right) = \bar{x},
            \end{equation*}and \begin{equation*}
        - \nabla (f+h)(\bar x) = \dfrac{1}{\gamma}\big(\bar x-\gamma \nabla (f+h)(\bar x) -Y_{\gamma}(\bar x) \big) \in \partial g( \bar x ) ,
    \end{equation*} that is, $\bar x$ is a stationary point of $\varphi$. The converse follows similarly, since $T_{\gamma}(\bar x)$ and $Y_{\gamma}(\bar x)$  satisfy their corresponding first-order optimality conditions. Indeed, if $\bar{x}$ is a stationary point of $\varphi$, then  \begin{equation*}
        \dfrac{1}{\gamma}\big(\bar{x}-\gamma \nabla (f+h)(\bar{x}) -\bar{x} \big) \in \partial g (\bar{x}),
    \end{equation*} which  is equivalent to $\bar x = Y_{\gamma}(\bar x)$. In turn, this identity also implies \begin{equation*}
        \dfrac{1}{\gamma}\big( (1-\lambda)\bar x + \gamma \nabla f(\bar x) + \lambda \bar x - \bar x \big) = \nabla f(\bar x),
    \end{equation*} which, from Proposition~\ref{OC-operators},  implies $\bar x = T_{\gamma}(\bar x)$.

\end{proof}

 \subsection{Convergence of damped shadow Davis-Yin splitting} \label{s:convergence-DY}

Consider a stepsize $\gamma > 0$ and a damping parameter $\alpha \in (0,1)$. Define the damped shadow DY iteration operator as \begin{equation} \label{eq:damped-DY-operator}
    \R^d \ni x \mapsto {\rm DY}_{\gamma,\alpha}(x) := (1-\alpha)x + \alpha T_{\gamma}(x),
\end{equation} where $T_\gamma$ is defined in \eqref{DY:T-operator}. In this manner,  the damped shadow DY sequence  $(x^k)_{k\geq 1}$ is,   for all $k \geq 1$,  \begin{equation} \label{eq:damped-shadow-DY}
        x^{k+1} = {\rm DY}_{\gamma,\alpha}(x^k).
    \end{equation} Hence, the sequence generated via \eqref{eq:damped-shadow-DY} corresponds to the one in Algorithm~\ref{a:damped-DY}. In order to show convergence of the sequence generated by this algorithm, we revisit a strategy introduced by Opial in \cite{MR211301} to analyse fixed-point iterations, namely, \emph{asymptotic regularity}. We say an operator $S: \R^d \to \R^d$ is asymptotically regular if for any  $u^0\in\R^d$,  the sequence $(u^k)_{k\geq1}$ generated by $u^{k+1} = S(u^k)$ for all $k\geq1$ satisfies $u^k - S(u^k) \to 0$. Opial's theorem \cite[Theorem 1]{MR211301} holds true for nonexpansive operators, suitable for convex optimisation. In the next result, we explore this result when the operator is not necessarily nonexpansive, which is the case in nonconvex optimisation.

    \begin{lemma} \label{l:Opial}
    Let Assumption~\ref{a:ass1} hold, and $\rho_f \geq 0$ be the constant of weak convexity of $f$, $\gamma \in (0, \min\{\gamma_g, \rho_f^{-1}\})$ be a stepsize,  $\alpha \in (0,1)$ a dumping parameter, and $\lambda >0$ a relaxation parameter. Suppose that the damped shadow DY sequence $(x_k)_{k\geq1}$ in \eqref{eq:damped-shadow-DY} has a nonempty set of cluster points, and that  $x^k - {\rm DY}_{\gamma,\alpha}(x^k) \to 0$. Then any cluster point of $(x_k)_{k\geq1}$ is a stationary point of $\varphi$.
    \end{lemma}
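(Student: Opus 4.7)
The plan is to fix a cluster point $\bar{x}$ of $(x^k)_{k\geq1}$ with a subsequence $x^{k_j}\to\bar{x}$, and show that $\bar{x}$ is a fixed-point of the proximal-like map $Y_\gamma$; by Proposition~\ref{DY:critical-characterization-I}, this is equivalent to $\bar{x}$ being stationary for $\varphi$. The asymptotic regularity hypothesis will drive everything: from ${\rm DY}_{\gamma,\alpha}(x)=(1-\alpha)x+\alpha T_\gamma(x)$ and $\alpha\in(0,1)$, the assumption $x^k-{\rm DY}_{\gamma,\alpha}(x^k)\to 0$ immediately gives $\alpha\,(x^k-T_\gamma(x^k))\to 0$, hence $x^k-T_\gamma(x^k)\to 0$ and, along the chosen subsequence, $T_\gamma(x^{k_j})\to\bar{x}$.

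Next I would control the selection $y^{k_j}:=Y_\gamma(x^{k_j})$ used inside $T_\gamma(x^{k_j})$. Rearranging the optimality identity \eqref{T-OC} yields
\begin{equation*}
\lambda\,y^{k_j}=T_\gamma(x^{k_j})-(1-\lambda)x^{k_j}+\gamma\bigl[\nabla f(T_\gamma(x^{k_j}))-\nabla f(x^{k_j})\bigr].
\end{equation*}
Since $\nabla f$ is Lipschitz (Assumption~\ref{a:ass1}(i)) and $T_\gamma(x^{k_j})-x^{k_j}\to 0$, the bracketed term vanishes in the limit; combined with $x^{k_j}\to\bar{x}$ and $T_\gamma(x^{k_j})\to\bar{x}$, this gives $\lambda y^{k_j}\to\lambda\bar{x}$, and hence $y^{k_j}\to\bar{x}$ because $\lambda>0$.

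Finally, I would pass to the limit in the defining inclusion $y^{k_j}\in\prox_{\gamma g}\bigl(x^{k_j}-\gamma\nabla(f+h)(x^{k_j})\bigr)$. Lipschitz continuity of $\nabla(f+h)$ yields $x^{k_j}-\gamma\nabla(f+h)(x^{k_j})\to\bar{x}-\gamma\nabla(f+h)(\bar{x})$, while outer semicontinuity of $\prox_{\gamma g}$ (\cite[Example 5.23(b)]{rockafellar2009variational}, applicable by prox-boundedness in Assumption~\ref{a:ass1}(ii)) together with $y^{k_j}\to\bar{x}$ produces $\bar{x}\in\prox_{\gamma g}(\bar{x}-\gamma\nabla(f+h)(\bar{x}))$. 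Thus $\bar{x}$ is a fixed-point of $Y_\gamma$, and Proposition~\ref{DY:critical-characterization-I} concludes that $\bar{x}$ is stationary for $\varphi$.

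The delicate step is the second one: $Y_\gamma$ is only set-valued and $\prox_{\gamma g}$ only outer semicontinuous (since $g$ is not assumed weakly convex), so one cannot simply appeal to continuity of $Y_\gamma$ at $\bar{x}$. The key trick is that single-valuedness and Lipschitzness of $\prox_{\gamma f}$ (guaranteed by $\gamma<\rho_f^{-1}$) make $T_\gamma$ well-defined and allow \eqref{T-OC} to pin down the limit of the $Y_\gamma$-selection without choosing a new selection a posteriori, enabling a clean outer-semicontinuity argument for $\prox_{\gamma g}$.
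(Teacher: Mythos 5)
Your proposal is correct and follows essentially the same route as the paper's proof: both extract $x^k - T_\gamma(x^k)\to 0$ from the damping identity, use \eqref{T-OC} together with Lipschitz continuity of $\nabla f$ to show the $Y_\gamma$-selection converges to the cluster point, and then invoke outer semicontinuity of $\prox_{\gamma g}$ to pass to the limit in the inclusion and read off stationarity. The only cosmetic difference is that the paper phrases the last step as outer semicontinuity of the set-valued map $Y_\gamma$ and writes out the first-order optimality condition directly, whereas you conclude via the fixed-point characterisation in Proposition~\ref{DY:critical-characterization-I}; these are equivalent.
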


    \begin{proof}
    Asymptotic regularity implies that $T_\gamma(x^k) - x^k \to 0$. Since $\nabla f$ is Lipschitz continuous, then $\nabla f(T_\gamma(x^k)) - \nabla f(x^k) \to 0$.  Let $x^\star$ be a cluster point of $(x_k)_{k\geq1}$, such that, up to a subsequence, $x^k \to x^\star$. Then, treating $x\mapsto Y_\gamma(x)$ as a set-valued map, it follows from \eqref{T-OC} that \begin{equation*}
        \frac{\gamma}{\lambda} \big(\nabla f(T_\gamma(x^k))-\nabla f(x^k)\big) + \frac{1}{\lambda}\big(T_\gamma(x^k) - x^k \big) +  x^k \in  Y_\gamma(x^k).
    \end{equation*} As $x\mapsto \prox_{\gamma g}(x)$ is outer semicontinuous, continuity of $\nabla (f+h)$ implies that  $x\mapsto Y_\gamma(x)$ is outer semicontinuous as well. Then, taking the limit in the above inclusion yields $x^\star \in \limsup Y_\gamma(x^k) \subseteq Y_\gamma(x^\star)$. The first-order optimality condition associated with the DY  map \eqref{eq:DY-solution-map} implies that $0 \in \partial g(x^\star) + \frac{1}{\gamma}\big(x^\star - (x^\star - \gamma \nabla (f+h)(x^\star))\big) = \partial g(x^\star) + \nabla (f+h)(x^\star)$, and thus $x^\star$ is a stationary point of $\varphi$.


    \end{proof}

    In this manner, the convergence analysis of Algorithm~\ref{a:damped-DY}  relies on verifying that the conditions in Lemma~\ref{l:Opial} are satisfied. The core of the argument is based on a sufficient descent condition for the DYE. Different from \cite{themelis2020douglas, bian2021three, alcantara2024four, alcantara2025relaxed}, this sufficient condition needs to hold alongside the damped shadow DY sequence, and not for the traditional fixed-point sequence $(z_k)_{k\geq1}$ from the iteration scheme \eqref{DY}.  Its proof is inspired by \cite[Theorem 3.5, Theorem 3.9]{alcantara2024four}, which in turn was motivated by \cite{themelis2020douglas}. For convenience, we define \begin{equation} \label{eq:Q}
        Q_{\gamma(f+h)}(y;x) := (f+h)(x) + \left<\nabla (f+h)(x), y-x\right> + \frac{1}{2\gamma}\|y-x\|^2,
    \end{equation} in such a way that \begin{equation} \label{eq:DYE-Q}
        \DYE(x) = \inf_{y \in \R^d}\{Q_{\gamma(f+h)}(y;x) + g(y)\}.
    \end{equation} From the definition of the map $Y_\gamma$, $y = Y_\gamma(x^k)$ solves the optimisation problem in \eqref{eq:DYE-Q}.
    
    We start the proof of the sufficient descent condition with two  technical results.

    \begin{lemma} \label{l:suff-descent}

    Suppose Assumption~\ref{a:ass1} holds. Let $\rho_f\geq0$ such that $f$ is $\rho_f$-weakly convex. Let $(x_k)_{k\geq1}$ be the damped shadow DY sequence of  Algorithm~\ref{a:damped-DY}. Then for any $ L \geq L_f$ and all $k\geq1$, \begin{equation*}
            \DYE(x^{k+1}) \leq Q_{\gamma (f+h)}(Y_\gamma(x^k);x^{k+1}) + g(Y_\gamma(x^k)),
        \end{equation*} and \begin{align*}
            \DYE(x^k)  \geq & Q_{\gamma(f+h)}(Y_\gamma(x^k);x^{k+1})  + g(Y_\gamma(x^k))   + q_1^k + q_2^k \\ & + \frac{1}{2(L - \rho_f)}\|\nabla f(x^k) - \nabla f(x^{k+1})\|^2 \\ & - \left( \frac{L_h}{2} + \frac{1}{2\gamma} + \frac{\rho_f L}{2(L-\rho_f)}\right)\|x^k - x^{k+1}\|^2,
        \end{align*} where
        \begin{equation}\label{eq:q_1^k}
            q_1^k := \left< \nabla f(x^{k+1}) - \nabla f(x^k) - \frac{1}{\gamma}(x^{k+1}-x^k), x^k - Y_\gamma(x^k) \right>
        \end{equation} and \begin{equation}\label{eq:q_2^k}
            q_2^k := \left< \nabla h(x^k) - \nabla h(x^{k+1}), Y_\gamma(x^k) - x^{k+1} \right>.
        \end{equation}
    \end{lemma}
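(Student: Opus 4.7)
The plan is to prove the two estimates in turn. The first inequality follows immediately from the definition of $\DYE$ as an infimum: since $\DYE(x^{k+1})$ equals the infimum of $Q_{\gamma(f+h)}(\cdot\,; x^{k+1}) + g(\cdot)$ over all $y \in \R^d$, evaluating at the specific candidate $y = Y_\gamma(x^k)$ (which minimises the objective at $x^k$ but generally not at $x^{k+1}$) yields the claimed upper bound.

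For the second inequality, the starting point is that $Y_\gamma(x^k)$ attains the infimum at $x^k$, so $\DYE(x^k) = Q_{\gamma(f+h)}(Y_\gamma(x^k); x^k) + g(Y_\gamma(x^k))$. Subtracting $Q_{\gamma(f+h)}(Y_\gamma(x^k); x^{k+1}) + g(Y_\gamma(x^k))$ cancels the $g$-terms and reduces the task to lower-bounding the difference $Q_{\gamma(f+h)}(Y_\gamma(x^k); x^k) - Q_{\gamma(f+h)}(Y_\gamma(x^k); x^{k+1})$. Expanding with the definition in \eqref{eq:Q} and using the identity $\frac{1}{2\gamma}(\|y - x^k\|^2 - \|y - x^{k+1}\|^2) = \frac{1}{\gamma}\langle x^{k+1} - x^k, y - x^k\rangle - \frac{1}{2\gamma}\|x^{k+1} - x^k\|^2$, then regrouping the gradient cross terms so that $q_1^k$ and $q_2^k$ emerge cleanly (with $y = Y_\gamma(x^k)$), I would show that the difference equals $\bigl[f(x^k) - f(x^{k+1}) + \langle \nabla f(x^{k+1}), x^{k+1} - x^k\rangle\bigr] + \bigl[h(x^k) - h(x^{k+1}) + \langle \nabla h(x^k), x^{k+1} - x^k\rangle\bigr] + q_1^k + q_2^k - \frac{1}{2\gamma}\|x^{k+1} - x^k\|^2$, where notice the asymmetric evaluation points of the two gradients (at $x^{k+1}$ for $f$, at $x^k$ for $h$) is precisely what makes the residuals compatible with the structure of $q_1^k$ and $q_2^k$.

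The two bracketed Bregman-like expressions are then bounded separately. The $h$-expression is handled by the standard descent lemma for the $L_h$-smooth function $h$, which yields $\geq -\frac{L_h}{2}\|x^{k+1} - x^k\|^2$. For the $f$-expression, I would apply the Baillon--Haddad cocoercivity inequality to the convexified function $\tilde f := f + \frac{\rho_f}{2}\|\cdot\|^2$, which is convex and $(L+\rho_f)$-smooth for any $L \geq L_f$. A direct simplification yields $f(x^k) - f(x^{k+1}) + \langle \nabla f(x^{k+1}), x^{k+1} - x^k\rangle \geq \frac{1}{2(L+\rho_f)}\|\nabla f(x^k) - \nabla f(x^{k+1}) + \rho_f(x^k - x^{k+1})\|^2 - \frac{\rho_f}{2}\|x^k - x^{k+1}\|^2$. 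To convert this into the form with $L - \rho_f$ in the denominator, I would expand the squared norm and lower-bound the cross-product term $2\rho_f\langle \nabla f(x^k) - \nabla f(x^{k+1}), x^k - x^{k+1}\rangle$ via the auxiliary monotonicity--Lipschitz bound $\langle \nabla f(x) - \nabla f(y), x - y\rangle \geq \frac{1}{L - \rho_f}\|\nabla f(x) - \nabla f(y)\|^2 - \frac{\rho_f L}{L - \rho_f}\|x - y\|^2$, which itself is an algebraic consequence of the cocoercivity of $\nabla \tilde f$.

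The main obstacle is precisely this sharpening step: the naive Baillon--Haddad bound has $L + \rho_f$ in the denominator rather than the required $L - \rho_f$, and reconciling the two demands careful algebraic bookkeeping so that both the $\|\nabla f(x^k) - \nabla f(x^{k+1})\|^2$ coefficient and the residual coefficient on $\|x^k - x^{k+1}\|^2$ telescope to the advertised values $\frac{1}{2(L-\rho_f)}$ and $\frac{\rho_f L}{2(L-\rho_f)}$. Once that reduction is done, assembling the $f$- and $h$-descent bounds with $q_1^k + q_2^k - \frac{1}{2\gamma}\|x^{k+1} - x^k\|^2$ gives the stated lower bound on $\DYE(x^k)$, with the coefficient of $\|x^k - x^{k+1}\|^2$ collecting exactly to $\frac{L_h}{2} + \frac{1}{2\gamma} + \frac{\rho_f L}{2(L-\rho_f)}$.
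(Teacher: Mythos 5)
Your proposal is correct, and it is genuinely more self-contained than the paper's proof: the paper disposes of the first estimate exactly as you do (evaluate the infimum defining $\DYE(x^{k+1})$ at the candidate $y=Y_\gamma(x^k)$), but for the second estimate it simply invokes an external result (Lemma 3.4 of the cited four-operator splitting paper) with a dictionary of substitutions, whereas you reconstruct that lemma from scratch. Your decomposition is the right one: I checked that $Q_{\gamma(f+h)}(Y_\gamma(x^k);x^k)-Q_{\gamma(f+h)}(Y_\gamma(x^k);x^{k+1})$ does reduce exactly to the two Bregman residuals (with $\nabla f$ evaluated at $x^{k+1}$ and $\nabla h$ at $x^k$, which is what makes $q_1^k$ and $q_2^k$ pop out) plus $q_1^k+q_2^k-\tfrac{1}{2\gamma}\|x^{k+1}-x^k\|^2$. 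The only step you leave as "bookkeeping" also goes through: writing $u=\nabla f(x^k)-\nabla f(x^{k+1})$ and $v=x^k-x^{k+1}$, cocoercivity of $\nabla(f+\tfrac{\rho_f}{2}\|\cdot\|^2)$ gives $\langle u,v\rangle\geq\tfrac{1}{L-\rho_f}\|u\|^2-\tfrac{\rho_f L}{L-\rho_f}\|v\|^2$, and substituting this into the Baillon--Haddad bound $D_f\geq\tfrac{1}{2(L+\rho_f)}\|u+\rho_f v\|^2-\tfrac{\rho_f}{2}\|v\|^2$ (legitimate, since the cross term carries the nonnegative coefficient $\tfrac{\rho_f}{L+\rho_f}$) makes the coefficients collapse to exactly $\tfrac{1}{2(L-\rho_f)}$ and $-\tfrac{\rho_f L}{2(L-\rho_f)}$. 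What the paper's route buys is brevity at the cost of an opaque parameter translation; what yours buys is a verifiable in-place argument that also makes transparent why the constant degenerates as $L\downarrow\rho_f$ (a boundary case the lemma's hypothesis $L\geq L_f$ does not formally exclude, though the subsequent theorem only ever uses $L>\rho_f$).
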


    \begin{proof}
        The first estimate follows directly from \eqref{eq:Q}-\eqref{eq:DYE-Q} by taking $y = Y_\gamma(x^k)$. The second estimate is \cite[Lemma 3.4]{alcantara2024four}  with  $x = x^k$, $y^+ = Y_\gamma(x^k)$, $\hat{x} = x^{k+1}$, 
        $p = 0$, $\rho_p = 0$ and $\alpha \gets \gamma$ (therein, $\alpha$ is the stepsize).
    \end{proof}

    \begin{lemma} \label{l:estimes-q}
    In the context of Lemma~\ref{l:suff-descent}, the following hold for all $k\geq1$: \begin{align*}
        q_1^k \geq & \left[-\frac{2L_f}{\lambda\alpha}  - \frac{L_f^2\gamma}{\lambda}\left(\frac{1-\alpha}{\alpha}\right) + \frac{1}{\lambda\alpha \gamma} \right]\|x^k - x^{k+1}\|^2  - \frac{\gamma}{\lambda}\|\nabla f(x^k) - \nabla f (x^{k+1})\|^2
    \end{align*} and \begin{align*}
        q_2^k \geq & 
        -\frac{L_h}{\lambda\alpha}(|1-\lambda\alpha|+L_f\gamma)\|x^k - x^{k+1}\|^2.
    \end{align*}
        
    \end{lemma}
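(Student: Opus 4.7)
The plan is to reduce both $q_1^k$ and $q_2^k$ to expressions in $x^{k+1}-x^k$ by using the optimality condition \eqref{T-OC} to eliminate $Y_\gamma(x^k)$. Writing $x^{k+1}=(1-\alpha)x^k+\alpha T_\gamma(x^k)$ from \eqref{eq:damped-DY-operator} gives $T_\gamma(x^k)-x^k=\tfrac{1}{\alpha}(x^{k+1}-x^k)$ and $T_\gamma(x^k)-x^{k+1}=\tfrac{1-\alpha}{\alpha}(x^{k+1}-x^k)$. Solving \eqref{T-OC} for $Y_\gamma(x^k)$ and subtracting $x^k$ yields
\begin{equation*}
Y_\gamma(x^k)-x^k=\tfrac{1}{\lambda\alpha}(x^{k+1}-x^k)+\tfrac{\gamma}{\lambda}\bigl(\nabla f(T_\gamma(x^k))-\nabla f(x^k)\bigr).
\end{equation*}
These three identities are what I would regard as the central computation; everything else is bookkeeping and Cauchy--Schwarz.

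For $q_1^k$, I would substitute this formula for $x^k-Y_\gamma(x^k)$ into \eqref{eq:q_1^k} and expand into four inner products. The term $\tfrac{1}{\lambda\alpha\gamma}\|x^{k+1}-x^k\|^2$ appears directly, as the only quadratic-in-$(x^{k+1}-x^k)$ piece. The two cross terms that couple $\nabla f(x^{k+1})-\nabla f(x^k)$ with $(x^{k+1}-x^k)$ and with $\nabla f(T_\gamma(x^k))-\nabla f(x^k)$ are controlled by Cauchy--Schwarz together with the $L_f$-Lipschitz bounds $\|\nabla f(x^{k+1})-\nabla f(x^k)\|\le L_f\|x^{k+1}-x^k\|$ and $\|\nabla f(T_\gamma(x^k))-\nabla f(x^{k+1})\|\le L_f\tfrac{1-\alpha}{\alpha}\|x^{k+1}-x^k\|$, producing the $-\tfrac{2L_f}{\lambda\alpha}$ and $-\tfrac{L_f^2\gamma(1-\alpha)}{\lambda\alpha}$ contributions. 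The remaining term $-\tfrac{\gamma}{\lambda}\|\nabla f(x^{k+1})-\nabla f(x^k)\|^2$ is kept exact, since it will eventually be absorbed by the positive $\tfrac{1}{2(L-\rho_f)}\|\nabla f(x^k)-\nabla f(x^{k+1})\|^2$ term in Lemma~\ref{l:suff-descent} during the descent inequality step.

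For $q_2^k$, I would combine the identity above with $x^k-x^{k+1}$ to obtain
\begin{equation*}
Y_\gamma(x^k)-x^{k+1}=\tfrac{1-\lambda\alpha}{\lambda\alpha}(x^{k+1}-x^k)+\tfrac{\gamma}{\lambda}\bigl(\nabla f(T_\gamma(x^k))-\nabla f(x^k)\bigr),
\end{equation*}
so that $\|Y_\gamma(x^k)-x^{k+1}\|\le\tfrac{1}{\lambda\alpha}\bigl(|1-\lambda\alpha|+L_f\gamma\bigr)\|x^{k+1}-x^k\|$ via the triangle inequality. A single application of Cauchy--Schwarz in \eqref{eq:q_2^k}, together with $L_h$-Lipschitzness of $\nabla h$, then delivers the claimed bound.

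The only genuinely delicate point is keeping track of signs and absolute values: $1-\lambda\alpha$ may be negative, which forces the $|1-\lambda\alpha|$ in the estimate for $q_2^k$, and the expansion of $q_1^k$ produces a term with the ``wrong'' sign that one must handle by splitting $\nabla f(T_\gamma(x^k))-\nabla f(x^k)=(\nabla f(T_\gamma(x^k))-\nabla f(x^{k+1}))+(\nabla f(x^{k+1})-\nabla f(x^k))$ before applying Lipschitzness, so that the $\|\nabla f(x^{k+1})-\nabla f(x^k)\|^2$ contribution is isolated and does not get absorbed into a suboptimal quadratic bound in $\|x^{k+1}-x^k\|^2$. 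Beyond this algebraic care the proof is a mechanical expand-and-estimate.
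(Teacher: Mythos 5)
Your proposal is correct and follows essentially the same route as the paper: the identities $x^k-T_\gamma(x^k)=\tfrac{1}{\alpha}(x^k-x^{k+1})$ and $x^k-Y_\gamma(x^k)=\tfrac{1}{\lambda\alpha}(x^k-x^{k+1})+\tfrac{\gamma}{\lambda}\bigl(\nabla f(x^k)-\nabla f(T_\gamma(x^k))\bigr)$ obtained from \eqref{eq:damped-DY-operator} and \eqref{T-OC}, followed by expansion, the add-and-subtract of $\nabla f(x^{k+1})$ before Cauchy--Schwarz so that the $\|\nabla f(x^{k+1})-\nabla f(x^k)\|^2$ term is kept exact, and the triangle-inequality bound on $Y_\gamma(x^k)-x^{k+1}$ for $q_2^k$. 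The constants you describe reproduce the paper's computation exactly.
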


    \begin{proof}
From \eqref{eq:damped-DY-operator} and \eqref{eq:damped-shadow-DY}, \begin{equation} \label{eq:x-T}
    x^k - T_\gamma(x^k) = \frac{1}{\alpha}(x^k - x^{k+1}),
\end{equation} which combined with \eqref{T-OC} yields \begin{equation} \label{eq:x-Y}
    \begin{aligned}
    x^k - Y_\gamma(x^k) & = \frac{1}{\lambda}\left[x^k - T_\gamma(x^k) + \gamma \big( \nabla f(x^k) - \nabla f(T_\gamma(x^k))\big)\right]\\ 
    & = \frac{1}{\lambda\alpha}(x^k-x^{k+1}) + \frac{\gamma}{\lambda}\big(\nabla f(x^k) - \nabla f(T_\gamma(x^k))\big)
    \end{aligned}
\end{equation} By substituting \eqref{eq:x-Y} in \eqref{eq:q_1^k}, we get \begin{align*}
    q_1^k  = & \frac{1}{\lambda\alpha}\left< \nabla f(x^{k+1}) - \nabla f(x^k),x^k - x^{k+1} \right> \\ &  + \frac{\gamma}{\lambda}\left<\nabla f(x^{k+1}) - \nabla f(x^k), \nabla f(x^k) - \nabla f(T_\gamma(x^k)) \right>\\
    & + \frac{1}{\lambda\alpha\gamma}\|x^k-x^{k+1}\|^2- \frac{1}{\lambda}\left< x^{k+1}-x^k, \nabla f(x^k) - \nabla f(T_\gamma(x^k)) \right>\\ %
    \geq & -\frac{1}{\lambda\alpha}\|\nabla f(x^{k+1}) - \nabla f(x^k)\|\|x^k-x^{k+1}\| - \frac{\gamma}{\lambda}\| \nabla f(x^{k+1})  - \nabla f(x^k)\|^2  \\  & - \frac{\gamma}{\lambda} \|\nabla f(x^{k+1}) - \nabla f(x^k)\|\| \nabla f(x^{k+1}) - \nabla f(T_\gamma(x^k))\|
    \\ & + \frac{1}{\lambda\alpha\gamma}\|x^k-x^{k+1}\|^2- \frac{1}{\lambda}\| x^{k+1}-x^k\| \| \nabla f(x^k) - \nabla f(T_\gamma(x^k))\| \\ %
    \geq & -\frac{L_f}{\lambda\alpha}\|x^k-x^{k+1}\|^2 - \frac{\gamma}{\lambda}\| \nabla f(x^{k+1})  - \nabla f(x^k)\|^2   - \frac{\gamma L_f^2}{\lambda}\|x^{k+1} - x^k \| \|  x^{k+1} - T_\gamma(x^k) \| \\ & + \frac{1}{\lambda\alpha\gamma}\|x^k-x^{k+1}\|^2 - \frac{L_f}{\lambda}\| x^{k+1}-x^k\| \| x^k - T_\gamma(x^k)\|,
    \end{align*} where the first inequality follows from adding and subtracting $\nabla f(x^{k+1})$ in the inner product in the second line and apply the Cauchy-Schwarz inequality, and the second inequality follows from using Lipschitz continuity of $\nabla f$. Moreover, from \eqref{eq:x-T}, $T_\gamma(x^k) - x^{k+1} = (1-\alpha)(T_\gamma(x^k)-x^k) = \left(\frac{1-\alpha}{\alpha}\right)(x^{k+1}-x^k)$, then \begin{align*}
        q_1^k \geq & -\frac{L_f}{\lambda\alpha}\|x^k-x^{k+1}\|^2 - \frac{\gamma}{\lambda}\| \nabla f(x^{k+1})  - \nabla f(x^k)\|^2   - \frac{\gamma L_f^2}{\lambda}\left(\frac{1-\alpha}{\alpha}\right)\|x^{k+1} - x^k \|^2 \\  & + \frac{1}{\lambda\alpha\gamma}\|x^k-x^{k+1}\|^2 - \frac{L_f}{\lambda\alpha}\| x^{k+1}-x^k\|^2,
    \end{align*}
    from where the first estimate follows. Next, in view of \eqref{eq:x-Y}, \begin{equation*}
        Y_\gamma(x^k) - x^{k+1} = \left(\frac{1}{\lambda\alpha}-1 \right)(x^{k+1}-x^k) + \frac{\gamma}{\lambda}\big(\nabla f(T_\gamma(x^k))-\nabla f(x^k)\big),
    \end{equation*} then substituting this identity in \eqref{eq:q_2^k} yields \begin{align*}
        q_2^k = & \left(\frac{1}{\lambda\alpha}-1 \right)\left< \nabla h(x^k) - \nabla h(x^{k+1}), x^{k+1}-x^k \right> \\  & 
       + \frac{\gamma}{\lambda}\left< \nabla h(x^k) - \nabla h(x^{k+1}), \nabla f(T_\gamma(x^k))-\nabla f(x^k) \right> \\%
       \geq & -\left|\frac{1}{\lambda\alpha}-1 \right|L_h \|x^k-x^{k+1}\|^2  
       - \frac{\gamma}{\lambda}L_hL_f\| x^k - x^{k+1} \| \|T_\gamma(x^k)-x^k\|\\%
       = & -\left|\frac{1}{\lambda\alpha}-1 \right|L_h \|x^k-x^{k+1}\|^2  
       - \frac{\gamma}{\lambda\alpha}L_hL_f\| x^k - x^{k+1} \|^2,
    \end{align*} where the inequality is obtained by using Lipschitz continuity of $\nabla h$ and $\nabla f$, and the fact that $\beta \geq -|\beta|$ for all $\beta \in \R$, and the equality follows from \eqref{eq:x-T}. Thus we obtain the estimate for $q_2^k$. \end{proof}

In the following result, we show that the DYE satisfies a sufficient descent condition alongside the damped shadow DY sequence in Algorithm~\ref{a:damped-DY}. 

 \begin{theorem}[DY splitting: damped shadow sufficient descent] \label{t:DYE-suff-descent} Let Assumption~\ref{a:ass1} hold. Let  $\alpha \in (0,1)$ be a damping parameter, and $\lambda >0$ a relaxation parameter such that $\lambda\alpha < 2$. Consider the damped shadow DY sequence $(x^k)_{k \geq 1}$ generated via Algorithm~\ref{a:damped-DY}. Then, for all sufficiently small $\gamma >0$, there exists $c > 0$, such that for all  $k \geq 1$,
\begin{equation} \label{eq:DYE-suff-descent}
    \varphi_\gamma^{\rm DY}(x^{k+1}) + \frac{c}{\gamma}\|x^{k+1}-x^k\|^2 \leq \varphi_\gamma^{\rm DY}(x^k).
\end{equation}

 \end{theorem}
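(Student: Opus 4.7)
The plan is to subtract the two inequalities from Lemma~\ref{l:suff-descent}, substitute the lower bounds on $q_1^k, q_2^k$ from Lemma~\ref{l:estimes-q}, and then show that, as $\gamma \downarrow 0$, the leading-order coefficient of $\|x^k-x^{k+1}\|^2$ on the right-hand side is a positive multiple of $1/\gamma$, which allows absorbing all remaining $O(1)$ and $O(\gamma)$ contributions for $\gamma$ small enough.

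More concretely, I would fix an arbitrary $L > \rho_f$ with $L \geq L_f$ (for instance $L = L_f$ if $\rho_f < L_f$, otherwise $L = L_f+1$), and write the telescoping estimate
\begin{equation*}
\DYE(x^k) - \DYE(x^{k+1}) \ge q_1^k + q_2^k + \frac{1}{2(L-\rho_f)}\|\nabla f(x^k)-\nabla f(x^{k+1})\|^2 - \left(\frac{L_h}{2}+\frac{1}{2\gamma}+\frac{\rho_f L}{2(L-\rho_f)}\right)\|x^k-x^{k+1}\|^2.
\end{equation*}
Then, plugging in the bounds from Lemma~\ref{l:estimes-q}, the coefficient of $\|\nabla f(x^k)-\nabla f(x^{k+1})\|^2$ becomes $\frac{1}{2(L-\rho_f)} - \frac{\gamma}{\lambda}$, which is non-negative for $\gamma \le \frac{\lambda}{2(L-\rho_f)}$, so this term can be discarded.

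The coefficient of $\|x^k-x^{k+1}\|^2$ is
\begin{equation*}
C(\gamma) := \frac{1}{\lambda\alpha\gamma} - \frac{1}{2\gamma} - \frac{2L_f}{\lambda\alpha} - \frac{L_f^2\gamma}{\lambda}\cdot\frac{1-\alpha}{\alpha} - \frac{L_h(|1-\lambda\alpha|+L_f\gamma)}{\lambda\alpha} - \frac{L_h}{2} - \frac{\rho_f L}{2(L-\rho_f)}.
\end{equation*}
The crucial observation is that the two $1/\gamma$-terms combine into $\frac{1}{\gamma}\cdot\frac{2-\lambda\alpha}{2\lambda\alpha}$, which is strictly positive precisely because of the assumption $\lambda\alpha < 2$. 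All the other contributions are of order $O(1)$ or $O(\gamma)$, so setting
\begin{equation*}
c := \frac{2-\lambda\alpha}{4\lambda\alpha},
\end{equation*}
one has $\gamma\, C(\gamma) \to \frac{2-\lambda\alpha}{2\lambda\alpha} > 2c$ as $\gamma \downarrow 0$, hence $C(\gamma) \ge c/\gamma$ for every $\gamma$ below an explicit threshold $\bar\gamma > 0$ (determined by a quadratic-in-$\gamma$ inequality obtained by multiplying through by $\gamma$). Combining $\gamma \le \bar\gamma$ with the earlier bounds $\gamma < \min\{\gamma_g,(L_f+L_h)^{-1}, \rho_f^{-1},\frac{\lambda}{2(L-\rho_f)}\}$ yields the admissible stepsize range.

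The main obstacle is really just bookkeeping: keeping track of which terms scale in $\gamma^{-1}$, which are constant, and which vanish with $\gamma$, and making sure that no hidden $1/\gamma$ contribution is lurking among the negative terms. Once this is verified, the inequality \eqref{eq:DYE-suff-descent} follows directly. No new analytical ingredient beyond the two preceding lemmas and the standing assumption $\lambda\alpha < 2$ is needed.
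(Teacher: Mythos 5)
Your proposal is correct and follows essentially the same route as the paper: subtract the two estimates of Lemma~\ref{l:suff-descent}, insert the bounds of Lemma~\ref{l:estimes-q}, and exploit that the $\gamma^{-1}$-coefficient $\frac{1}{\lambda\alpha}-\frac{1}{2}$ of $\|x^k-x^{k+1}\|^2$ is strictly positive because $\lambda\alpha<2$, so it dominates the $O(1)$ and $O(\gamma)$ terms for small $\gamma$. The only difference is that the paper additionally treats the case $(2-\lambda)L_f-2\rho_f\geq\lambda L_h$ separately, keeping the (then negative) coefficient of $\|\nabla f(x^k)-\nabla f(x^{k+1})\|^2$ and absorbing it via $\|\nabla f(x^k)-\nabla f(x^{k+1})\|\leq L_f\|x^k-x^{k+1}\|$ in order to admit a possibly larger stepsize range, whereas you always impose $\gamma\leq\lambda/(2(L-\rho_f))$ and discard that term outright, which suffices for the stated ``for all sufficiently small $\gamma$'' conclusion.
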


 \begin{proof}
 Let $\rho_f \geq 0$ be a constant of weak convexity of $f$, and $L > \rho_f$. By subtracting the two estimates in Lemma~\ref{l:suff-descent}, and using Lemma~\ref{l:estimes-q}, we get \begin{multline}\label{eq:suff-descent-aux-1}
    \DYE(x^k) - \DYE(x^{k+1}) \geq  \left[ - \frac{\gamma}{\lambda} + \frac{1}{2(L - \rho_f)}\right]\|\nabla f(x^k) - \nabla f(x^{k+1})\|^2 \\ + C(\gamma,L)\|x^k - x^{k+1}\|^2,
\end{multline} where \begin{multline} \label{eq:C-suff-descent}
    C(\gamma,L) := -\left(\frac{L_f^2}{\lambda}\left(\frac{1-\alpha}{\alpha}\right) + \frac{L_fL_h}{\lambda\alpha} \right)\gamma - \left( \frac{2L_f}{\lambda\alpha}  + \left(\frac{1}{2} + \frac{|1-\lambda\alpha|}{\lambda\alpha} \right) L_h + \frac{\rho_f L}{2(L-\rho_f)}\right) \\ + \left( \frac{1}{\lambda\alpha} - \frac{1}{2}\right) \frac{1}{\gamma}.
\end{multline} 
We separate the analysis in two cases, depending on the sign of the constant multiplying $\|\nabla f(x^k) - \nabla f(x^{k+1})\|^2$ above.

First, suppose that $(2-\lambda)L_f - 2\rho_f \geq \lambda L_h$. Then $L_f - \rho_f \geq \frac{\lambda}{2}(L_h+L_f)>0$. In this case, we may take $L = L_f$, so that $C(\gamma,L_f)$ is well-defined. Furthermore, choose $\hat{\gamma} \geq \frac{\lambda}{2(L_f-\rho_f)}$. Then, for all $\gamma \in (0, \hat{\gamma}]$, \begin{equation*}
    - \frac{\gamma}{\lambda} + \frac{1}{2(L_f - \rho_f)} \geq - \frac{\hat{\gamma}}{\lambda} + \frac{1}{2(L_f - \rho_f)}, \text{~where~} - \frac{\hat{\gamma}}{\lambda} + \frac{1}{2(L_f - \rho_f)} \leq 0.
\end{equation*} Hence, \begin{equation*}
     \left[ - \frac{\gamma}{\lambda} + \frac{1}{2(L_f - \rho_f)}\right]\|\nabla f(x^k) - \nabla f(x^{k+1})\|^2 \geq \left[- \frac{\hat{\gamma}}{\lambda} + \frac{1}{2(L_f - \rho_f)}\right]L_f^2\|x^k-x^{k+1}\|^2.
\end{equation*}In this manner, it suffices to show that  $\hat{C}_1(L_f) := \hat{\gamma}C(\hat{\gamma},L_f) + \left[- \frac{\hat{\gamma}}{\lambda} + \frac{1}{2(L_f - \rho_f)}\right]L_f^2\hat{\gamma}$ is positive. This is indeed the case, as \begin{multline*}
    \hat{C}_1(L_f) = - \left(\frac{L_f^2}{\lambda\alpha} + \frac{L_fL_h}{\lambda\alpha} \right)\hat{\gamma}^2  +  \frac{1}{\lambda\alpha} - \frac{1}{2} \\ -\left(\frac{2L_f}{\lambda\alpha} + \left(\frac{1}{2} + \frac{|1-\lambda\alpha|}{\lambda\alpha} \right) L_h + \frac{\rho_f L_f}{2(L-\rho_f)} - \frac{L_f^2}{2(L_f-\rho_f)}\right)\hat{\gamma}   
\end{multline*} defines a concave quadratic function with one strictly positive root $\hat{\gamma}^+$, as the intercept satisfies $\frac{1}{\lambda\alpha} - \frac{1}{2} > 0$ from the assumption $\lambda\alpha < 2$. Therefore, $\hat{C}_1(L_f) > 0$ as long as $\hat{\gamma} < \hat{\gamma}^+$. Hence, \eqref{eq:DYE-suff-descent} holds in this case for $\gamma \in (0, \min\{ \hat{\gamma}^+, \hat{\gamma}\} )$ and $c = \hat{C}_1(L_f)$. 

Secondly, suppose now that $(2-\lambda)L_f - 2\rho_f <\lambda L_h$, and  $L > \rho_f$. Define $\hat{\gamma}(L) := \frac{\lambda}{2(L-\rho_f)} > 0$, and consider $\gamma \in (0,\hat{\gamma}(L))$. Then \begin{equation*}
    - \frac{\gamma}{\lambda} + \frac{1}{2(L - \rho_f)} \geq - \frac{\hat{\gamma}(L)}{\lambda} + \frac{1}{2(L - \rho_f)}= 0.
\end{equation*} Hence, we can bound below the term associated with $\|\nabla f(x^k) - \nabla f(x^{k+1})\|^2$ in \eqref{eq:suff-descent-aux-1} by zero. In this case, we define \begin{multline} \label{eq:C_2-hat}
    \hat{C}_2(L) :=  - \left(\frac{L_f^2}{\lambda}\left(\frac{1-\alpha}{\alpha}\right) + \frac{L_fL_h}{\lambda\alpha} \right)\hat{\gamma}^2-\left(\frac{2L_f}{\lambda\alpha} + \left(\frac{1}{2} + \frac{|1-\lambda\alpha|}{\lambda\alpha} \right) L_h + \frac{\rho_f L}{2(L-\rho_f)}\right) \hat{\gamma} \\ +  \frac{1}{\lambda\alpha} - \frac{1}{2}
\end{multline} such that $\hat{C}_2(L) = C(\hat{\gamma}(L),L)$. Then  $\hat{C}_2(L) = \frac{1}{[2(L-\rho_f)]^2}p(L-\rho_f)$, where \begin{equation*}
    p(\eta) := 4\left(\frac{1}{\lambda\alpha} - \frac{1}{2}\right)\eta^2 - 2 \left(\frac{2L_f}{\alpha} + \left(\frac{\lambda}{2} + \frac{|1-\lambda\alpha|}{\alpha} \right) L_h  \right)\eta - \lambda\left( \rho_fL + L_f^2\left(\frac{1-\alpha}{\alpha}\right) + \frac{L_fL_h}{\alpha} \right).
\end{equation*}  Observe that $p$ is a convex quadratic function with one strictly positive root $\eta^+$, since $\lambda\alpha < 2$. Furthermore, its intercept  is negative as $\alpha \in (0,1)$. Then $\eta = \eta^+$ is the smallest positive value for which $p(\eta) \geq 0$. Hence, $ L^+:=\max\{L_f, \eta^+ +\rho_f\}$ is the smallest $L$ such that $C(\hat{\gamma}(L), L) \geq 0$. Furthermore, from \eqref{eq:C_2-hat}, we see that $\R_{++} \ni \gamma \mapsto C(\gamma, L^+)$ is a concave quadratic with positive intercept $\frac{1}{\lambda\alpha} - \frac{1}{2}$. Therefore, \eqref{eq:DYE-suff-descent} holds with $c = C(\gamma, L^+) > C(\hat{\gamma}(L^+), L^+) \geq 0$ for $\gamma \in (0, \hat{\gamma}(L^+))$. This completes the proof.

\end{proof}

\begin{remark} \label{r:stepsize-bound}
    For ease of presentation, the bounds for $\gamma >0$ and the value of the constant $c>0$ for which the sufficient decrease condition in Theorem~\ref{t:DYE-suff-descent} holds are not explicitly stated. However, they can be easily obtained from the proof: $\gamma \in (0, \min\{\hat{\gamma}^+,\hat{\gamma},\hat{\gamma}(L^+)\})$, and $c= \min\{\hat{C}_1(L_f), C(\gamma, L^+)\}$.
\end{remark}

As we  have shown that the damped shadow DY method behaves as a method of descent for the DYE, we are now ready to prove (subsequential) convergence leveraging on Lemma~\ref{l:Opial}.

 \begin{corollary}[Subsequential convergence of the damped shadow DY sequence] \label{c:subseq-conv}

 Let Assumption~\ref{a:ass1} hold, such that $f$ is $\rho_f$-weakly convex, and that $\varphi$ in \eqref{DY-problem} is  level-bounded.  Let $\gamma \in (0, \min\{\gamma_g, (L_f+L_h)^{-1}\})$ satisfy Remark~\ref{r:stepsize-bound}.   Then, ${\rm DY}_{\gamma,\alpha}$ is asymptotically regular,  the damped shadow DY sequence $(x_k)_{k\geq1}$ of  Algorithm~\ref{a:damped-DY}  is bounded and any of its cluster points is a stationary point of $\varphi$. Furthermore, $\lim \DYE(x^k)$ is a stationary value.
 \end{corollary}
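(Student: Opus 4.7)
The plan is to combine the sufficient descent from Theorem~\ref{t:DYE-suff-descent} with the envelope properties from Lemma~\ref{l:envelope-prop} and the Opial-style statement in Lemma~\ref{l:Opial}. I would proceed in three stages: first establish asymptotic regularity, then boundedness of the iterates, and finally identify the limit value with a stationary value.

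For asymptotic regularity, since $\varphi$ is bounded below by Assumption~\ref{a:ass1}(iii) and $\inf \DYE = \inf \varphi$ by Lemma~\ref{l:envelope-prop}(ii), the sequence $(\DYE(x^k))_{k\geq 1}$ is bounded below. Theorem~\ref{t:DYE-suff-descent} further shows it is nonincreasing, hence it converges to some limit $\varphi^\star \in \R$. Telescoping the sufficient descent inequality \eqref{eq:DYE-suff-descent} gives
\begin{equation*}
\frac{c}{\gamma}\sum_{k=1}^{\infty}\|x^{k+1}-x^k\|^2 \leq \DYE(x^1) - \varphi^\star < +\infty,
\end{equation*}
so $\|x^{k+1}-x^k\| \to 0$. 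In view of \eqref{eq:damped-DY-operator}-\eqref{eq:damped-shadow-DY}, this is exactly $x^k - {\rm DY}_{\gamma,\alpha}(x^k) \to 0$, i.e. the damped shadow DY operator is asymptotically regular.

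For boundedness, level-boundedness of $\varphi$ transfers to $\DYE$ by Lemma~\ref{l:envelope-prop}(iii). Since $(\DYE(x^k))_{k\geq 1}$ is nonincreasing, the iterates lie inside the bounded level set $\{x \in \R^d : \DYE(x) \leq \DYE(x^1)\}$, so $(x^k)_{k\geq 1}$ is bounded and admits at least one cluster point. Combined with asymptotic regularity, Lemma~\ref{l:Opial} immediately yields that every cluster point of $(x^k)_{k\geq 1}$ is a stationary point of $\varphi$.

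For the last assertion, let $\bar x$ be any cluster point of $(x^k)_{k\geq 1}$, with $x^{k_j} \to \bar x$. By Lemma~\ref{l:envelope-prop}(i), $\DYE$ is (locally) continuous, so $\DYE(x^{k_j}) \to \DYE(\bar x)$; since the whole sequence $(\DYE(x^k))_{k\geq1}$ converges, $\lim_k \DYE(x^k) = \DYE(\bar x)$. It remains to identify this value as $\varphi(\bar x)$. The step I expect to be the main (but modest) obstacle is exactly this identification, since in general $\DYE \leq \varphi$ with possibly strict inequality. Here, however, $\bar x$ is stationary for $\varphi$, so by Proposition~\ref{DY:critical-characterization-I} it is a fixed-point of $Y_\gamma$, meaning that $y = \bar x$ lies in the argmin of the problem defining $\DYE(\bar x)$ in \eqref{eq:DYE-Q}; evaluating the objective in \eqref{eq:Q} at $y = \bar x$ yields $\DYE(\bar x) = Q_{\gamma(f+h)}(\bar x; \bar x) + g(\bar x) = \varphi(\bar x)$. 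Therefore $\lim_k \DYE(x^k) = \varphi(\bar x)$ is a stationary value of $\varphi$, concluding the proof.
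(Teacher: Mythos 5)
Your proof is correct, and its first three stages (asymptotic regularity by telescoping the descent inequality \eqref{eq:DYE-suff-descent}, boundedness via level sets of the envelope and Lemma~\ref{l:envelope-prop}(iii), and stationarity of cluster points via Lemma~\ref{l:Opial}) coincide with the paper's argument. Where you genuinely diverge is the final identification of $\lim_k \DYE(x^k)$ as a stationary value. The paper does not evaluate the envelope at the cluster point through the fixed-point characterisation; instead it shows $Y_\gamma(x^k)\to x^\star$ along the subsequence (via \eqref{eq:x-Y} and asymptotic regularity), invokes the inequality $\varphi(Y_\gamma(x^k))\le \DYE(x^k) - K(\gamma)\|Y_\gamma(x^k)-x^k\|^2$ from \cite{liu2019envelope}, and sandwiches using lower semicontinuity of $\varphi$ together with the pointwise bound $\DYE\le\varphi$. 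You instead use continuity of $\DYE$ to get $\lim_k\DYE(x^k)=\DYE(\bar x)$ and then Proposition~\ref{DY:critical-characterization-I} to conclude that $y=\bar x$ attains the infimum defining $\DYE(\bar x)$, whence $\DYE(\bar x)=\varphi(\bar x)$. Your route is shorter and is legitimate within the paper's framework, but note that it leans on the implication ``stationary point of $\varphi$ $\Rightarrow$ fixed point of $Y_\gamma$'', i.e.\ on $\bar x$ being a \emph{global} minimiser of the subproblem $y\mapsto Q_{\gamma(f+h)}(y;\bar x)+g(y)$ rather than merely a critical point of it; for $g$ that is only prox-bounded (not weakly convex) this is the delicate direction of Proposition~\ref{DY:critical-characterization-I}. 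The paper's sandwich argument sidesteps that dependence entirely, needing only lower semicontinuity of $\varphi$, which makes it the more robust of the two; your argument buys brevity at the cost of inheriting whatever regularity is needed to make that direction of the equivalence airtight.
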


 \begin{proof}

 From Theorem~\ref{t:DYE-suff-descent}, $(\DYE(x^k))_{k\geq1}$ is  nonincreasing, and since $(x_k)_{k\geq1} \subseteq {\rm lev}_{\DYE}(\DYE(x^0))$, then $(x_k)_{k\geq1}$ is a bounded sequence due to Lemma~\ref{l:envelope-prop}(iii) and the assumption that $\varphi$ is level-bounded. Therefore, the set of cluster points of $(x_k)_{k\geq1}$  is nonempty. Furthermore, in view of Lemma~\ref{l:envelope-prop}(ii), $\DYE$ is bounded below, and thus so is the sequence $(\DYE(x^k))_{k\geq1}$. Hence, there exists $\varphi^\star \geq  \inf \varphi$ such that $\DYE(x^k) \to \varphi^\star$. From Theorem~\ref{t:DYE-suff-descent} again, we conclude then that $x^{k+1} - x^k \to 0$, so that ${\rm DY}_{\gamma,\alpha}$ is asymptotically regular. Lemma~\ref{l:Opial} implies that any cluster point of $(x_k)_{k\geq1}$  is a stationary point of $\varphi$. This proves the first part of the statement. Next, let $x^\star$ be am arbitrary cluster point of $(x_k)_{k\geq1}$. From  \eqref{eq:damped-DY-operator} and \eqref{eq:damped-shadow-DY}, we also have $T_\gamma(x^k)-x^k\to 0$, and since $\nabla f$ is Lipschitz continuous, then $\nabla f(T_\gamma(x^k))-\nabla f(x^k)\to 0$. Hence \eqref{eq:x-Y} implies $x^k - Y_\gamma(x^k) \to 0$, and thus $Y_\gamma(x^k) \to x^\star$. Moreover, due to \cite[eq. (8)]{liu2019envelope}, for some constant $K(\gamma) > 0$, there holds \begin{equation*}
     \varphi(Y_\gamma(x^k))\leq \DYE(x^k) - K(\gamma)\|Y_\gamma(x^k) - x^k\|^2.
 \end{equation*} Taking the limit, as $\varphi$ is lower semicontinuous, it follows that \begin{equation*}
     \varphi(x^\star) \leq \liminf \varphi(Y_\gamma(x^k)) \leq \limsup \varphi(Y_\gamma(x^k)) \leq \lim \DYE(x^k) = \varphi(x^\star),
 \end{equation*} where the last equality follows from $\varphi^\star = \varphi(x^\star)$, since   continuity of $\DYE$ and uniqueness of the limit of $(\DYE(x^k))_{k\geq1}$ imply $\varphi^\star = \varphi(x^\star)$. This concludes the proof.
  
     
     
 \end{proof}


 \begin{remark}[Global convergence and rates of convergence] \label{r:conv}

 Under now standard assumptions in nonconvex optimisation (see, e.g. \cite{atenas2023unified,attouch2013convergence}), namely,  the Kurdyka-Łojasiewicz inequality, global convergence of the damped shadow DY sequence to a stationary point of $\DYE$ can be shown using \cite[Theorem 2.9]{attouch2013convergence}, for which one needs to verify \cite[(H1)--(H3)]{attouch2013convergence}. First, (H1) is the sufficient descent condition in \eqref{eq:DYE-suff-descent}. (H2) is a subgradient estimate we proceed to deduce. Suppose,  in addition to the assumptions in Corollary~\ref{c:subseq-conv}, that 
 both $f$ and $h$ have Lipschitz continuous Hessians. Suppose $\R^d \ni x \mapsto Y_\gamma(x)$ is single-valued (e.g. when $g$ is weakly convex). In view of boundedness of $(x_k)_{k\geq1}$, Lemma~\ref{l:DYE-subdiff}, \eqref{eq:x-Y}, and \eqref{eq:x-T}, there exists $M>0$, such that for all $k\geq1$ and $w^k \in \partial \DYE(x^k)$, there holds \begin{align*}
         \|w^k\| = & \frac{1}{\gamma}\|I - \gamma \nabla^2(f+h)(x^k)\|_{\rm op} \|x^k - Y_\gamma(x^k)\| \\
         \leq & \frac{M}{\gamma}\left(\frac{1}{\lambda\alpha}\|x^k-x^{k+1}\| + \frac{\gamma L_f}{\lambda}\|x^k-T_\gamma(x^k)\|\right)\\
         = & \frac{M}{\lambda\alpha\gamma}(1+L_f\gamma)\|x^k-x^{k+1}\|.
     \end{align*} The last condition (H3) holds because $(x_k)_{k\geq1}$ has a nonempty set of cluster points (Corollary~\ref{c:subseq-conv}) and $\DYE$ is locally Lipschitz continuous (Lemma~\ref{l:envelope-prop}(i)). In this manner, $(x_k)_{k\geq1}$ converges to a stationary point $x^\star$ of $\DYE$.  Since $I - \gamma \nabla^2(f+g)(x^\star)$ is invertible for $\gamma \in (0, (L_f+L_h)^{-1})$, from Lemma~\ref{l:DYE-subdiff} then $x^\star = Y_\gamma(x^\star)$.  Proposition~\ref{DY:critical-characterization-I} thus implies that $x^\star$ is a stationary point of $\varphi$. Rates of convergence can also be obtained whenever the Kurdyka-Łojasiewicz exponent is known \cite[Theorem 3.4]{attouch2010proximal}. Finally, the assumption on the Hessians can be  weakened if we follow an approach similar to \cite[Section 4.1]{atenas2025understanding}.







 \end{remark}

 \section{Saddle point avoidance of shadow  DY method} \label{s:saddle-avoidance}

 In the nonconvex smooth optimisation case, first-order methods for the minimisation of $C^2$ functions avoid stationary points with negative curvature when randomly initialised \cite{lee2016gradient, panageas2016gradient}. In \cite{davis2022proximal}, this result is extended to methods of proximal type for nonconvex problems with structured nonsmoothness, namely, under the assumption of the strict saddle property. The methods included in the latter work are the proximal point algorithm, the proximal gradient and prox-linear methods. The heart of the analysis therein is the \emph{Centre-Stable manifold theorem} \cite[Theorem III.7]{shub2013global}. In this section, for the same type of model structures, we show that the same result holds for splitting methods.

 The saddle point avoidance result of the damped shadow DY splitting method will follow from the analogous results of the proximal gradient method, in view of \eqref{DY-FB-identity-2}. The next proposition summarises the results we  need, retrieved from \cite[Theorem 4.1]{davis2022proximal} and its proof.

\begin{proposition}[Proximal-gradient map properties] \label{prop:prox-gradient}
    Consider the optimisation problem \eqref{DY-problem}, and let $\bar{x}$ be a stationary point of $\varphi = f +g +h$. Suppose that $f$ is $L_f$-smooth, $h$ is $L_h$-smooth, and $g$ is a proper lower semicontinuous $\rho$-weakly convex function that admits a $C^2$-smooth active manifold $\mathcal{M}$ at  $\bar{x} $ with local defining equations $G = 0$. Then, for any $\gamma \in (0,  \rho^{-1})$,
     \begin{enumerate}
     \item[(i)] $Y_{\gamma}(x) \in \mathcal{M}$  for all $x$ near $\bar{x}$
     \item[(ii)] if $W$ denotes the orthogonal projection onto $T_\M(\bar{x})$, then the linear map $\overline{H}_{yy} := WH_{yy}W$, where  $H_{yy} := \nabla \hat{g}(\bar x) + \gamma^{-1}I + \sum_{i=1}^p\bar{\lambda}_i \nabla^2 G_i(\bar x)$, is (symmetric) positive definite
     \item[(iii)] $Y_{\gamma}(\cdot)$ is a $C^1$ map around $\bar x$, such that for all $h \in T_{\mathcal{M}}(\bar x)$, \begin{equation*}
         \nabla Y_{\gamma} (\bar{z})h = - \overline{H}_{yy}^{-1}\overline{H}_{xy}h
     \end{equation*} where $\overline{H}_{xy}:= W H_{xy}W$, with  $ H_{xy} := \nabla^2 (f+h)(\bar x) - \gamma^{-1}I$, is symmetric
     \item[(iv)] if, in addition, $\bar x$ is a strict saddle point, then $\overline{H}_{xy} +  \overline{H}_{yy}$ has a strictly negative eigenvalue.
 \end{enumerate}

\end{proposition}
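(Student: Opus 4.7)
The plan is to reduce the statement to the proximal-gradient result \cite[Theorem 4.1]{davis2022proximal} by treating $f+h$ as a single smooth term. First I would observe that, since $f$ is $L_f$-smooth and $h$ is $L_h$-smooth, the sum $f+h$ is $(L_f+L_h)$-smooth with gradient $\nabla(f+h)=\nabla f+\nabla h$ and Hessian $\nabla^2(f+h)=\nabla^2 f+\nabla^2 h$ wherever $f,h$ are $C^2$; consequently, in view of the identity $\RDY = Y^{\rm FB}_{f+h,g}$ from \eqref{eq:DY-solution-map}, the map $Y_\gamma$ is literally the proximal-gradient map associated to the composite problem $\min_x\{(f+h)(x)+g(x)\}$. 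Moreover, $\bar x$ is a stationary point of $\varphi$ iff it is a stationary point of this composite, and the hypothesis that $g$ admits a $C^2$ active manifold $\M$ at $\bar x$ is unchanged by this relabelling.

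For item (i), I would invoke the identification property of the proximal map of a function with an active manifold: since $\M$ is $C^2$-active for $g$ at $\bar x$ and $\gamma\in(0,\rho^{-1})$ guarantees that $\prox_{\gamma g}$ is single-valued and continuous (cf. Section~\ref{s:prelim}), the composition $Y_\gamma(x)=\prox_{\gamma g}(x-\gamma\nabla(f+h)(x))$ lands in $\M$ for $x$ sufficiently close to $\bar x$; this is exactly the identification step in \cite[Theorem 4.1]{davis2022proximal}. For item (iii), I would then apply the implicit function theorem to the first-order optimality condition for $Y_\gamma(x)$ restricted to the manifold, namely to the Lagrangian $\hat g(y)+\tfrac{1}{2\gamma}\|y-(x-\gamma\nabla(f+h)(x))\|^2$ subject to $G(y)=0$, where $\hat g$ is a local $C^2$ extension of $g\lvert_\M$. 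The resulting KKT system has Jacobian block $H_{yy}=\nabla^2\hat g(\bar x)+\gamma^{-1}I+\sum_i\bar\lambda_i\nabla^2 G_i(\bar x)$ in the $y$-variable and $H_{xy}=\nabla^2(f+h)(\bar x)-\gamma^{-1}I$ in the $x$-variable, and the projected matrix $\overline H_{yy}=WH_{yy}W$ governs invertibility restricted to $T_\M(\bar x)$, yielding the displayed formula for $\nabla Y_\gamma(\bar x)h$.

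For item (ii), positive definiteness of $\overline H_{yy}$ follows from $\rho$-weak convexity of $g$ together with $\gamma<\rho^{-1}$: on the tangent space, the ``curvature'' contribution $W\nabla^2\hat g(\bar x)W$ plus the term $W(\sum_i\bar\lambda_i\nabla^2 G_i)W$ is bounded below by $-\rho I$ (this is the intrinsic, tangential, weak-convexity estimate), while $\gamma^{-1}W W=\gamma^{-1}I$ on $T_\M(\bar x)$ dominates when $\gamma^{-1}>\rho$, as established in \cite[Theorem 4.1]{davis2022proximal}. Finally, for item (iv), the strict-saddle hypothesis provides a direction $u\in T_\M(\bar x)$ with $d^2\varphi_\M(\bar x)(u)<0$; writing out this parabolic subderivative on the manifold gives $\langle u,W(\nabla^2(f+h)(\bar x)+\nabla^2\hat g(\bar x)+\sum_i\bar\lambda_i\nabla^2 G_i(\bar x))Wu\rangle<0$, and recognising that this quadratic form equals $\langle u,(\overline H_{xy}+\overline H_{yy})u\rangle$ (the two $\gamma^{-1}I$ terms cancel on $T_\M(\bar x)$) exhibits the promised strictly negative eigenvalue. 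The main obstacle is the careful bookkeeping in item (iv) between the intrinsic parabolic subderivative and the extrinsic Hessian-plus-Lagrangian expression, but this is precisely the content of the corresponding step in \cite[Theorem 4.1]{davis2022proximal}, which transfers verbatim once $f+h$ is treated as the smooth term.
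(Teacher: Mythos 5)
Your proposal is correct and matches the paper's argument exactly: the paper's proof is the one-line reduction of applying \cite[Theorem 4.1]{davis2022proximal} to the $(L_f+L_h)$-smooth function $f+h$ and the $\rho$-weakly convex function $g$, which is precisely the relabelling you perform before unpacking the individual items. Your additional detail on items (i)--(iv) simply spells out the content of that cited theorem, so no further comparison is needed.
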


\begin{proof}
    Apply \cite[Theorem 4.1]{davis2022proximal} to the $(L_f+L_h)$-smooth function $f+g$ and the $\rho$-weakly convex function $g$.
\end{proof}

\subsection{Strict saddle point avoidance: Davis-Yin}

In this section, we show that the DY method converges to local minimisers almost surely, whenever the strict saddle property holds.  We first state two preliminary results.

\begin{lemma}[{\cite[Lemma 2.14]{davis2022proximal}}] \label{l:damped}
    Consider a map $T: \R^d \to \R^d$ and a damping parameter $\alpha \in (0,1)$. Define the map $R:= (1-\alpha) I + \alpha T $. Then
    \begin{enumerate}
    \item[(i)] the fixed-points of $R$
 and $T$ coincide,
 \item[(ii)] if $T$ is continuous and the sequence $(x_k)_{k\geq1}$ generated via $x^{k+1} = R(x^k)$ for all $k\geq1$, converge to a point $\bar{x}$, then $\bar{x}$ is a fixed-point of $T$,
 \item[(iii)] if $T$ is differentiable and the Jacobian $\nabla T (\bar x)$ has a real eigenvalue strictly greater than one, then $\bar x$ is an unstable fixed-point of $R$,
 \item[(iv)] if the map $I - T$ is Lipschitz continuous with constant $L > 0$, then $R$ is a lipeomorphism for any $\alpha >0$ such that $\alpha L < 1$.
    \end{enumerate}
\end{lemma}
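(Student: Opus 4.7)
The plan is to verify each of the four items by short, direct arguments that exploit the linear structure of $R = (1-\alpha)I + \alpha T$.

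For item (i), I would simply rewrite the fixed-point equation $R(\bar x) = \bar x$ as $(1-\alpha)\bar x + \alpha T(\bar x) = \bar x$, which rearranges to $\alpha(T(\bar x) - \bar x) = 0$; since $\alpha \neq 0$, this is equivalent to $T(\bar x) = \bar x$. For item (ii), continuity of $T$ transfers to continuity of $R$, so passing to the limit in $x^{k+1} = R(x^k)$ along the assumed convergent sequence gives $\bar x = R(\bar x)$, and (i) then yields $T(\bar x) = \bar x$.

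For item (iii), I would use the chain rule to compute $\nabla R(\bar x) = (1-\alpha) I + \alpha \nabla T(\bar x)$. The spectra are related by an affine transformation: if $\mu$ is an eigenvalue of $\nabla T(\bar x)$ with eigenvector $v$, then $(1-\alpha) + \alpha \mu$ is an eigenvalue of $\nabla R(\bar x)$ with the same eigenvector. Taking $\mu > 1$ real, we obtain
\[
(1-\alpha) + \alpha \mu = 1 + \alpha(\mu - 1) > 1,
\]
so $\nabla R(\bar x)$ admits a real eigenvalue strictly greater than one, in particular one with magnitude greater than one. Hence $\bar x$ is an unstable fixed-point of $R$.

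For item (iv), I would write $R = I - \alpha(I - T)$, which exhibits $R$ as a perturbation of the identity by a Lipschitz map with constant $\alpha L < 1$. The reverse triangle inequality then gives
\[
\|R(x_1) - R(x_2)\| \geq \|x_1 - x_2\| - \alpha L \|x_1 - x_2\| = (1 - \alpha L)\|x_1 - x_2\|,
\]
so $R$ is injective and its (partial) inverse is Lipschitz with constant $(1-\alpha L)^{-1}$. For surjectivity, given $y \in \R^d$, I would consider the map $\Phi_y(x) := y + \alpha(I-T)(x)$, which is a strict contraction with constant $\alpha L < 1$; the Banach fixed-point theorem provides a unique $x$ with $\Phi_y(x) = x$, equivalently $R(x) = y$. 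Since $R$ is also Lipschitz with constant $1 + \alpha L$ (by the triangle inequality), $R$ is a lipeomorphism.

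The only step requiring real care is item (iv), where one must combine the lower and upper Lipschitz estimates with the Banach contraction argument to obtain both surjectivity and Lipschitz continuity of the inverse; the remaining items are essentially algebraic consequences of the affine definition of $R$. As the underlying lemma is already attributed to \cite[Lemma 2.14]{davis2022proximal}, the proof would be short and would either be omitted with a pointer to that reference, or presented as a brief self-contained verification as sketched above.
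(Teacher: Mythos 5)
Your verification is correct in all four items: the algebraic equivalence in (i), the passage to the limit in (ii), the affine spectral map $\mu \mapsto 1+\alpha(\mu-1)$ in (iii) (which correctly uses that the hypothesis demands a \emph{real} eigenvalue $\mu>1$, so the image stays outside the unit disc), and the combination of the reverse triangle inequality with the Banach contraction argument applied to $\Phi_y = y + \alpha(I-T)$ in (iv). The paper itself gives no proof — it imports the lemma verbatim from the cited reference — so your short self-contained verification is a valid substitute and matches what that reference does.
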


\begin{lemma}[{\cite[Corollary 2.12]{davis2022proximal}}] \label{main_stab_cor}
    Let $Q : \R^d \to \R^d$ be a lipeomorphism 	and let $\mathcal{U}_Q$ consist of all unstable fixed-points $x$  of 
	$Q$ at  which the Jacobian $\nabla Q(x)$  is  invertible. Then the set of initial conditions attracted by such fixed points $$ W : = \left\{x \in \R^d \colon \lim_{k \rightarrow \infty} Q^k(x) \in \mathcal{U}_Q\right\} $$ has zero Lebesgue measure. 
\end{lemma}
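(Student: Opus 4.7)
The plan is to reduce the claim to a local application of the Center-Stable manifold theorem \cite[Theorem III.7]{shub2013global} at each $\bar x \in \mathcal{U}_Q$, and then globalise via a Lipschitz-preimage and second-countability argument that relies on the assumption that $Q$ is a lipeomorphism.

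First, I would fix $\bar x \in \mathcal{U}_Q$. Invertibility of $\nabla Q(\bar x)$ together with the existence of an eigenvalue of magnitude strictly greater than one places $Q$ in the hypotheses of the Center-Stable manifold theorem at $\bar x$: it produces an open neighbourhood $V_{\bar x}$ of $\bar x$ and a $C^1$-embedded submanifold $S_{\bar x} \subseteq V_{\bar x}$ whose dimension equals the number of eigenvalues of $\nabla Q(\bar x)$ of modulus at most one, and is therefore strictly less than $d$. Every forward trajectory under $Q$ that remains in $V_{\bar x}$ is trapped on $S_{\bar x}$, and $S_{\bar x}$ has zero Lebesgue measure in $\R^d$ as it has positive codimension.

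Next, I would set $W_{\bar x} := \{x \in \R^d : Q^k(x) \to \bar x\}$ and observe that for $x \in W_{\bar x}$ the orbit $(Q^k(x))_{k \geq 0}$ eventually lies inside $V_{\bar x}$, so that $Q^K(x) \in S_{\bar x}$ for some $K \geq 0$. This gives $W_{\bar x} \subseteq \bigcup_{K \geq 0} Q^{-K}(S_{\bar x})$, and since $Q^{-1}$ is Lipschitz it preserves Lebesgue null sets, so each $Q^{-K}(S_{\bar x})$ is null and $W_{\bar x}$ itself is null as a countable union of null sets. A second-countability argument applied to $\R^d$ then extracts a countable subcover $\{V_{\bar x_j}\}_{j \in \N}$ of $\mathcal{U}_Q$ from $\{V_{\bar x}\}_{\bar x \in \mathcal{U}_Q}$; since any $x \in W$ eventually lies in some $V_{\bar x_j}$ by the trapping observation, $W \subseteq \bigcup_{j \in \N}\bigcup_{K \geq 0} Q^{-K}(S_{\bar x_j})$, which is null as a countable union of null sets.

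The main obstacle I anticipate is matching the regularity of $Q$ to the classical hypotheses of the Center-Stable manifold theorem, which is usually stated for $C^r$ local diffeomorphisms with $r \geq 1$, whereas here $Q$ is only globally Lipschitz with a Jacobian existing at each $\bar x \in \mathcal{U}_Q$. However, invertibility of $\nabla Q(\bar x)$ promotes $Q$ to a local $C^1$-diffeomorphism around each such point under the implicit smoothness coming from how the iteration operator is constructed from proximal maps of twice continuously differentiable functions (cf. Proposition~\ref{OC-operators}), which places us squarely in the smooth setting of \cite{lee2016gradient, panageas2016gradient} underlying \cite[Corollary 2.12]{davis2022proximal}.
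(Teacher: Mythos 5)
The paper does not prove this lemma at all; it is imported verbatim as \cite[Corollary 2.12]{davis2022proximal}, so there is no internal proof to compare against. Your reconstruction is essentially the argument used in that reference (and in its precursors \cite{lee2016gradient,panageas2016gradient}): the Center-Stable Manifold Theorem at each $\bar x\in\mathcal{U}_Q$ yields a positive-codimension local manifold trapping all orbits that remain in a neighbourhood $V_{\bar x}$; convergence to $\bar x$ forces a tail of the orbit into $V_{\bar x}$, so $W_{\bar x}\subseteq\bigcup_K Q^{-K}(S_{\bar x})$; Lipschitz continuity of $Q^{-1}$ (this is exactly where the lipeomorphism hypothesis enters) makes each preimage null; and Lindel\"of reduces the union over $\mathcal{U}_Q$ to a countable one. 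All of these steps are sound, including the slightly delicate point that an orbit converging to some $\bar x\in V_{\bar x_j}$ with $\bar x\neq\bar x_j$ is still eventually trapped in $V_{\bar x_j}$ and hence lands on $S_{\bar x_j}$.

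The only place where your justification goes astray is the final paragraph on regularity. The lemma is a statement about an arbitrary lipeomorphism $Q$, so you cannot resolve the smoothness issue by appealing to the proximal structure of the Davis--Yin iteration operator (that would only cover the particular $Q$ used later in Theorem~\ref{DY:escape}). The correct resolution is already built into the hypotheses: the paper's definition of an \emph{unstable fixed-point} requires $Q$ to be a $C^1$ map in a neighbourhood of that point, and together with the assumed invertibility of $\nabla Q(\bar x)$ this makes $Q$ a local $C^1$-diffeomorphism near every $\bar x\in\mathcal{U}_Q$, which is precisely the setting of \cite[Theorem III.7]{shub2013global}. This is a misattribution rather than a gap; with that correction your proof is complete.
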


\begin{remark}
    From Lemma~\ref{l:damped}(iv) and Lemma~\ref{main_stab_cor}, it is clear we need the damping term in Algorithm~\ref{a:damped-DY}. From Lemma~\ref{l:damped}(ii), we also observe it is necessary to establish convergence of the damped version of the method, which we did in Section~\ref{s:convergence-DY}.
\end{remark}

The main result of this section will be a consequence of the two lemmas above. We start by checking that the conditions in Lemma~\ref{l:damped}(iii) are satisfied.

\begin{proposition}[Unstable fixed-points of DY iteration operator] \label{DY:unstable}

    Consider the optimisation problem \eqref{DY-problem}, and let $\bar{x}$ be a strict saddle point of $\varphi = f +g + h$. Suppose that $f$ is $L_f$-smooth, $h$ is $L_h$-smooth, and both $f$ and $h$ are $C^2$ functions. Additionally, suppose $g$ is a proper lower semicontinuous $\rho$-weakly convex function that admits a $C^2$-smooth active manifold $\mathcal{M}$ at the stationary point $\bar{x} $ of $\varphi$. Then, for any $\gamma \in (0,  \min\{\rho^{-1}, L_f^{-1}\})$,   $\nabla T_{\gamma}(\bar x)$ has a real eigenvalue that is strictly greater than one.
\end{proposition}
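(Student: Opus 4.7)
The plan is, in three steps, to (i) compute $\nabla T_\gamma(\bar x)$ in closed form; (ii) extract a real eigenvalue strictly greater than one for $\nabla Y_\gamma(\bar x)$ from the strict-saddle condition provided by Proposition~\ref{prop:prox-gradient}; and (iii) translate that spectral information into an analogous eigenvalue of $\nabla T_\gamma(\bar x)$. Throughout, I use block coordinates with respect to $\R^d = V \oplus V^\perp$, writing $V := T_\M(\bar x)$ and denoting by $M$ the $V\to V$ block of $\nabla Y_\gamma(\bar x)$.

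For step (i), Proposition~\ref{DY:critical-characterization-I} gives $Y_\gamma(\bar x)=\bar x$, so the argument of $\prox_{\gamma f}$ inside $T_\gamma(\bar x)$ reduces to $\bar x+\gamma\nabla f(\bar x)$, which $\prox_{\gamma f}$ sends back to $\bar x$ by the proximal optimality condition. Implicit differentiation of $p+\gamma\nabla f(p)=q$ at this base point yields $\nabla\prox_{\gamma f}(\bar x+\gamma\nabla f(\bar x))=A$, where $A:=(I+\gamma\nabla^2 f(\bar x))^{-1}$ is symmetric positive definite since $\gamma<L_f^{-1}$. The chain rule combined with the identity $\gamma\nabla^2 f(\bar x)=A^{-1}-I$ then yields the compact expression
\[
\nabla T_\gamma(\bar x)=I-\lambda AW,\qquad W:=I-\nabla Y_\gamma(\bar x),
\]
so it suffices to exhibit a real eigenvalue $\kappa<0$ of $AW$: the corresponding eigenvector then produces an eigenvalue $1-\lambda\kappa>1$ of $\nabla T_\gamma(\bar x)$.

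For step (ii), items~(ii)--(iii) of Proposition~\ref{prop:prox-gradient} identify $\nabla Y_\gamma(\bar x)|_V$ with $-\overline H_{yy}^{-1}\overline H_{xy}$, an operator that is self-adjoint in the $\overline H_{yy}$-inner product and hence diagonalisable with real eigenvalues $\{\mu_i\}$ and an $\overline H_{yy}$-orthogonal eigenbasis $\{v_i\}$ of $V$. For each such eigenvector, $v_i^\top(\overline H_{xy}+\overline H_{yy})v_i=(1-\mu_i)\,v_i^\top\overline H_{yy}v_i$, and expanding any witness vector supplied by item~(iv) in this basis forces some $\mu_{i^\star}>1$. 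Because $\nabla Y_\gamma(\bar x)$ has range contained in $V$, its block form is $\bigl(\begin{smallmatrix} M & N \\ 0 & 0 \end{smallmatrix}\bigr)$ and correspondingly
\[
W=\begin{pmatrix} I_V-M & -N \\ 0 & I_{V^\perp} \end{pmatrix},
\]
so $W$ has only real eigenvalues and admits $(v_{i^\star},0)\in V\subseteq\R^d$ as an eigenvector with eigenvalue $1-\mu_{i^\star}<0$.

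The main obstacle is step (iii): a negative real eigenvalue of $W$ need not persist in $AW$ for an arbitrary SPD $A$ (a complex-conjugate pair can appear when the spectral gap of $W$ is narrow relative to its off-diagonal coupling). My plan is to exploit the very specific block form above, in which the lower diagonal block of $W$ is the identity, whose positive spectrum is disjoint from the negative eigenvalues coming from $I_V-M$. The characteristic polynomial $p(\eta):=\det(W-\eta A^{-1})$ of $AW$ satisfies $p(\eta)\to+\infty$ as $\eta\to-\infty$, since $A^{-1}\succ 0$, and $p(0)=\det W=\prod_i(1-\mu_i)$; whenever an odd number of $\mu_i$ exceed one, the intermediate value theorem supplies a negative root directly, and for the parity-even case I would deform $A_t:=(1-t)I+tA$ (SPD along the whole path) and track the eigenvalue of $A_tW$ that equals $1-\mu_{i^\star}$ at $t=0$, using the $\overline H_{yy}$-self-adjointness of $M$ together with the block-triangular structure of $W$ to rule out a complex-pair bifurcation with another real eigenvalue. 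Combined with step~(i), this furnishes the desired real eigenvalue of $\nabla T_\gamma(\bar x)$ strictly greater than one.
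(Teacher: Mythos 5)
Your steps (i) and (ii) are correct and essentially coincide with the paper's setup: the identity $\nabla T_\gamma(\bar{x}) = I - \lambda A W$, with $A = (I+\gamma\nabla^2 f(\bar{x}))^{-1}$ and $W = I - \nabla Y_\gamma(\bar{x})$, is just the paper's relation $(I+\gamma\nabla^2 f(\bar{x}))\nabla T_\gamma(\bar{x}) = (1-\lambda)I + \lambda\nabla Y_\gamma(\bar{x}) + \gamma\nabla^2 f(\bar{x})$ rearranged, and the extraction of an eigenvalue $\mu_{i^\star}>1$ of $-\overline{H}_{yy}^{-1}\overline{H}_{xy}$ from items (ii)--(iv) of Proposition~\ref{prop:prox-gradient} is fine. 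The genuine gap is step (iii), which you yourself flag as the crux and only sketch. Your determinant/intermediate-value argument comparing $p(0)=\det W$ with $p(\eta)\to+\infty$ as $\eta\to-\infty$ works only when an odd number of the $\mu_i$ exceed one, and nothing in the hypotheses controls this parity. For the even case you propose the homotopy $A_t=(1-t)I+tA$ and assert that the $\overline{H}_{yy}$-self-adjointness of $M$ together with the block-triangular structure of $W$ rules out a complex-pair bifurcation, but you give no mechanism: when $W$ has an even number of negative eigenvalues, two real eigenvalues of $A_tW$ can collide and leave the real axis, and neither cited structural fact is a recognised obstruction. Since $AW$ is the product of an SPD matrix with a genuinely non-symmetric one (even on $V$, $I_V-M=\overline{H}_{yy}^{-1}(\overline{H}_{yy}+\overline{H}_{xy})$ is itself a product of an SPD and an indefinite symmetric matrix), no Sylvester-inertia argument applies; the persistence of a negative real eigenvalue under left multiplication by $A$ is precisely what has to be proved, and as written it is not.

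The paper avoids this obstacle by running the intermediate-value argument in the eigenvalue variable $\mu$ rather than in the matrix $A$: it recasts "$\mu$ is a real eigenvalue of $\nabla T_\gamma(\bar{x})$ with eigenvector in $T_{\M}(\bar{x})$" as singularity of the pencil $H(\mu)=(I+\gamma\nabla^2 f(\bar{x}))\big(\lambda\overline{H}_{xy}+(\lambda-1+\mu)\overline{H}_{yy}+\gamma(\mu-1)\overline{H}_{yy}\nabla^2 f(\bar{x})\big)$, notes that at $\mu=1$ this matrix is a positive multiple of $(I+\gamma\nabla^2 f(\bar{x}))(\overline{H}_{xy}+\overline{H}_{yy})$ and hence has a negative minimal eigenvalue by the strict-saddle condition of Proposition~\ref{prop:prox-gradient}(iv), while for large $\mu$ it is dominated by the positive-definite ray $\mu(I+\gamma\nabla^2 f(\bar{x}))\overline{H}_{yy}(I+\gamma\nabla^2 f(\bar{x}))$, and concludes by continuity of the minimal eigenvalue that $H(\mu)$ is singular for some $\mu>1$. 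To close your argument you would need either to reproduce such a pencil-in-$\mu$ construction or to actually prove the no-bifurcation claim along your homotopy; in its current form step (iii) does not go through.
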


\begin{proof}

In view of \eqref{DY:T-operator},  continuous differentiability of $\prox_{\gamma f}$, and Proposition~\ref{prop:prox-gradient}(iii), $T_{\gamma}$ is a $C^1$ map locally around $\bar x$. Furthermore, from \eqref{T-OC}, for all $x$ sufficiently close to $\bar x$,\begin{equation*}
     \gamma\nabla^2 f\big(T_{\gamma}(x) \big) \nabla T_{\gamma}(x) + \nabla T_{\gamma}(x) = (1-\lambda)I + \lambda \nabla Y_{\gamma}(x) + \gamma\nabla^2 f(x).
\end{equation*} Then, for $x = \bar x$, Proposition~\ref{DY:critical-characterization-I} and Proposition~\ref{prop:prox-gradient}(iii) yield, for all $h \in T_{\mathcal{M}}(\bar x)$, \begin{align*}
       \big( I + \gamma\nabla^2 f(\bar x) \big) \nabla T_{\gamma}(\bar x)h& =  \big((1-\lambda)I + \lambda \nabla Y_{\gamma}(x) + \gamma\nabla^2 f(x)\big)h \\
       & =  (1-\lambda)h - \lambda \overline{H}_{yy}^{-1}\overline{H}_{xy}h + \gamma\nabla^2 f(\bar x)h.
\end{align*} As $I + \gamma\nabla^2 f(\bar x)$ is invertible, $\mu \in \R$ is a real eigenvalue of $\nabla T_{\gamma}(\bar x)$ with an
associated eigenvector $h \in T_{\mathcal{M}}(\bar x)$ if and only if \begin{equation*}
    \begin{array}{rrl}
         &  \nabla T_{\gamma}(\bar x)h& = \mu h\\
        \iff  & \big((1-\lambda)I -\lambda  \overline{H}_{yy}^{-1}\overline{H}_{xy} + \gamma\nabla^2 f(\bar x) \big)h & = \mu\big( I + \gamma\nabla^2 f(\bar x) \big) h \\
        \iff  & \big(-\lambda \overline{H}_{yy}^{-1}\overline{H}_{xy} + \gamma(1-\mu)\nabla^2 f(\bar x) + ( 1 - \lambda -\mu) I \big)h & = 0\\
        \iff  & H(\mu)h & = 0,
    \end{array}
\end{equation*} where \begin{equation*}
    H(\mu) := \big( I + \gamma \nabla^2 f(\bar x)\big)\big(\lambda\overline{H}_{xy} + (\lambda - 1 + \mu) \overline{H}_{yy} + \gamma(\mu-1)\overline{H}_{yy}\nabla^2 f(\bar x)\big).
\end{equation*} In this manner, the claim is equivalent to $H(\mu)$ being singular for some $\mu>1$. On the one hand, from Proposition~\ref{prop:prox-gradient}(iv), the minimal eigenvalue of $H(\mu = 1) = \lambda \big( I + \gamma \nabla^2 f(\bar x)\big)  \big(\overline{H}_{xy} +  \overline{H}_{yy}\big)$ is strictly negative. On the other hand, let\begin{equation*}
    \begin{array}{l}
       H(\mu) = H_1 + H_2(\mu), \text{~where}\\
       H_1 :=  \big( I + \gamma \nabla^2 f(\bar x)\big)\big(\lambda\overline{H}_{xy} + (\lambda -1)\overline{H}_{yy} - \gamma \overline{H}_{yy}\nabla^2 f(\bar x)\big), \text{~and} \\
       H_2(\mu) := \mu ( I + \gamma \nabla^2 f(\bar x)\big) \overline{H}_{yy}\big( I + \gamma \nabla^2 f(\bar x)\big).
    \end{array}
\end{equation*} In view of Proposition~\ref{prop:prox-gradient}(ii), for $\mu > 1$, the direction $H_2(\mu)$ of the ray  is positive definite.  Then the minimum eigenvalue of $H(\mu)$, which is bounded below by the minimum eigenvalues of $H_1$ and $H_2(\mu)$, is strictly positive  for sufficiently large $\mu >1$. Hence, resorting to the continuity of the minimal eigenvalue of $H(\mu)$ as a function of $\mu$, there exists $\mu >1$ such that the minimal eigenvalue of $H(\mu)$ is zero. Therefore, $H(\mu)$ is singular for some $\mu >1$, from where we can conclude.
\end{proof}

We now state and prove the main result of this section, avoidance of strict saddle points for the damped shadow DY sequence generated by Algorithm~\ref{a:damped-DY}.

\begin{theorem}[Davis-Yin splitting: global escape] \label{DY:escape}
Let Assumption~\ref{a:ass1} hold, in such a way that $g$ is $\rho$-weakly convex. Suppose $\varphi = f +g + h$ has the strict saddle property. Choose a stepsize $\gamma \in (0, \rho^{-1})$, a relaxation parameter $\lambda >0$, and a damping parameter $\alpha \in (0,1)$ such that $\alpha L_2 < 1$, where \begin{equation*}
        \begin{array}{rcl}
             L_2 &=&  (1+ \gamma L_f + \lambda L_1)\max\left\{1,\frac{\gamma L_f}{1 - \gamma L_f}\right\} + \lambda L_1 + \gamma L_f, \text{~and}\\
             L_1 &=& 1 + \left( \dfrac{1 + \gamma (L_f+L_h)}{1 - \gamma\rho}\right).
        \end{array}
    \end{equation*} Consider the damped  shadow DY sequence $(x_k)_{k\geq1}$ generated by Algorithm~\ref{a:damped-DY}. Then, for almost all initial points $x^0 \in \R^d$, if the limit $x^\star$ of $(x_k)_{k\geq1}$ exists, then $x^\star$  is a local minimiser of $\varphi$.
\end{theorem}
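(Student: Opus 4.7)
The strategy is to apply the stability principle of Lemma~\ref{main_stab_cor} to the iteration operator $Q := {\rm DY}_{\gamma,\alpha}$ of Algorithm~\ref{a:damped-DY}. Two ingredients must be verified: (a) $Q$ is a lipeomorphism, and (b) every strict saddle of $\varphi$ belongs to the set $\mathcal{U}_Q$ of unstable fixed-points of $Q$ at which $\nabla Q$ is invertible.

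For (a), by Lemma~\ref{l:damped}(iv) it suffices to bound the Lipschitz constant of $I - T_\gamma$ by $L_2$. The optimality identity \eqref{T-OC} rewrites this map as
\begin{equation*}
    x - T_\gamma(x) = \lambda\bigl(x - Y_\gamma(x)\bigr) + \gamma\bigl(\nabla f(T_\gamma(x)) - \nabla f(x)\bigr).
\end{equation*}
Proposition~\ref{DY-continuity-prop}(i) together with the triangle inequality gives the Lipschitz constant $L_1$ of $I - Y_\gamma$; Lipschitz continuity of $\prox_{\gamma f}$ (with constant $(1 - \gamma L_f)^{-1}$ since $f$ is $L_f$-weakly convex) and of $\nabla f$ yields an estimate for $\|T_\gamma(x_1) - T_\gamma(x_2)\|$ via the definition $T_\gamma = \prox_{\gamma f}\big((1-\lambda)x + \gamma \nabla f(x) + \lambda Y_\gamma(x)\big)$; assembling these ingredients bounds the Lipschitz constant of $I - T_\gamma$ by $L_2$, and the assumption $\alpha L_2 < 1$ then secures the lipeomorphism property.

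For (b), let $\bar x$ be any strict saddle of $\varphi$. Since strict saddles are stationary points, Proposition~\ref{DY:critical-characterization-I} combined with Lemma~\ref{l:damped}(i) shows that $\bar x$ is a fixed-point of $Q$. The strict saddle structure supplies a $C^2$-smooth active manifold of $g$ at $\bar x$, so Proposition~\ref{DY:unstable} delivers a real eigenvalue of $\nabla T_\gamma(\bar x)$ strictly greater than one, and Lemma~\ref{l:damped}(iii) then promotes $\bar x$ to an unstable fixed-point of $Q$. Invertibility of $\nabla Q(\bar x)$ follows from the lipeomorphism property: a differentiable bi-Lipschitz map must have invertible Jacobian at any point of differentiability, because the Lipschitz estimate on $Q^{-1}$ forces $\nabla Q(\bar x)$ to have trivial kernel.

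With (a) and (b) in hand, Lemma~\ref{main_stab_cor} produces a zero Lebesgue measure set $W$ collecting every initial condition whose orbit converges to some point of $\mathcal{U}_Q$, and in particular to any strict saddle. For $x^0 \notin W$, if the sequence $(x_k)_{k\geq 1}$ admits a limit $x^\star$, then Lemma~\ref{l:damped}(ii) combined with continuity of $T_\gamma$ yields that $x^\star$ is a fixed-point of $T_\gamma$, and hence a stationary point of $\varphi$ by Proposition~\ref{DY:critical-characterization-I}. The strict saddle property then forces $x^\star$ to be a local minimiser. The main technical obstacle is matching the precise stated formula for $L_2$: the factor $\max\{1, \gamma L_f/(1-\gamma L_f)\}$ reflects a case split in the triangle-inequality bookkeeping, since the direct derivation yields a bound of the form $(1+\lambda L_1 + \gamma L_f)\gamma L_f/(1-\gamma L_f) + \lambda L_1 + \gamma L_f$, which $L_2$ conservatively dominates in the regime $\gamma L_f \leq 1/2$.
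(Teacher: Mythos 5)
Your proposal is correct and follows essentially the same route as the paper: the lipeomorphism property via Lemma~\ref{l:damped}(iv) with the $L_2$ Lipschitz bound on $I-T_\gamma$, instability of strict saddles via Proposition~\ref{DY:unstable} combined with Lemma~\ref{l:damped}(iii) and Proposition~\ref{DY:critical-characterization-I}, and the measure-zero conclusion from Lemma~\ref{main_stab_cor}. Your decomposition of $I-T_\gamma$ directly from \eqref{T-OC} differs only cosmetically from the paper's (which factors through the identity $\gamma\nabla f_\gamma^M = I - \prox_{\gamma f}$ and the Lipschitz modulus $\max\{1,\gamma L_f/(1-\gamma L_f)\}$ of $\gamma\nabla f_\gamma^M$), and it yields a constant dominated by $L_2$; moreover, you make explicit the Jacobian-invertibility check required for membership in $\mathcal{U}_Q$, which the paper leaves implicit.
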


\begin{proof}

  First, in view of  
    \begin{enumerate}
        \item[(i)] Lemma~\ref{l:damped}(i) and (ii) and Proposition~\ref{DY:critical-characterization-I}, the fixed-points of $DY_{\gamma,\alpha}$ coincide with the stationary points of $\varphi$, and if $(x_k)_{k\geq1}$ converges to some $x^\star$, then $x^\star$ is a stationary point of $\varphi$,
        \item[(ii)] Lemma~\ref{l:damped}(iii) and Proposition~\ref{DY:unstable}, if $x^\star$ is a strict saddle point of $\varphi$, such that $\varphi$ admits a $C^2$-smooth active manifold $\mathcal{M}$ at $x^\star $, then $x^\star$ is an unstable fixed-point of $DY_{\gamma, \alpha}$, 
        \item[(iii)] Lemma~\ref{l:damped}(iv), $DY_{\gamma, \alpha}$ is a lipeomorpshim, since    
    \begin{equation*}
        \begin{array}{rcl}
            I - T_{\gamma}  & = &  (1-\lambda) I + \gamma \nabla f + \lambda Y_{\gamma} - \prox_{\gamma f} \circ \big( (1-\lambda)I + \gamma \nabla f + \lambda Y_{\gamma} \big) \\
            & & \qquad + \lambda I - \gamma \nabla f - \lambda Y_{\gamma} \\
             & = &  \gamma \nabla f_{\gamma}^M \circ \big( (1-\lambda) I + \gamma \nabla f + \lambda Y_{\gamma}\big) + \lambda I - \gamma \nabla f - \lambda Y_{\gamma} \\
             & = &  \gamma \nabla f_{\gamma}^M \circ \big( I + \gamma \nabla f - \lambda( I - Y_{\gamma})\big) + \lambda ( I - Y_{\gamma}) - \gamma \nabla f.
        \end{array}
    \end{equation*}  Note that $I - Y_{\gamma}$ is Lipschitz continuous with constant $L_1$, in view of Proposition~\ref{DY-continuity-prop}(i). Then $ I - T_{\gamma}$ is Lipschitz continuous with constant $L_2$.
    \end{enumerate}
    In this manner, given $x^0 \in \R^d$, suppose $x^k \to x^\star$. In view of (i), $x^\star$ is a stationary point of $\varphi$. If $x^\star$ is not a local minimiser, then (ii) implies it is an unstable fixed-point of $DY_{\gamma,\alpha}$. Using (iv) and applying Proposition~\ref{main_stab_cor} with $Q =DY_{\gamma,\alpha}$ yields a contradiction, almost surely.

\end{proof}

\begin{remark}

Global escape (almost surely) of the damped shadow DY sequence can be guaranteed whenever $\gamma >0$ also satisfies the bound in Remark~\ref{r:conv}, as in this case  the limit of this sequence is guaranteed to exist. Furthermore, the redefinition of the DYE in \eqref{DY-env-x} omits the explicit use of $\prox_{\gamma f}$ that appear in the relation $\DYE \circ \prox_{\gamma f} = V_\gamma $, where $V_\gamma$ is the merit function in \eqref{DY-z}. Otherwise, the Jacobian of $\prox_{\gamma f}$ would appear in the analysis, breaking the analysis in Proposition~\ref{DY:unstable} and the symmetric nature of the matrices in Proposition~\ref{prop:prox-gradient}.


\end{remark}

\subsection{Strict saddle point avoidance: Douglas-Rachford}

As an application of Theorem~\ref{DY:escape} with $h=0$, we obtain the avoidance of strict saddles of the shadow Douglas-Rachford method.

\begin{corollary}[Douglas-Rachford splitting: global escape] \label{DR:escape}
Let Assumption~\ref{a:ass1} hold with $h = 0$, in such a way that $g$ is $\rho$-weakly convex. Suppose $\varphi = f +g $ has the strict saddle property. Choose a stepsize $\gamma \in (0, \rho^{-1})$, a relaxation parameter $\lambda >0$, and a damping parameter $\alpha \in (0,1)$ such that $\alpha L_2 < 1$, where\begin{equation*}
        \begin{array}{rcl}
             L_2 &=& \gamma L_f + \lambda L_1 + (1+ \gamma L_f + \lambda L_1)\max\left\{1,\frac{\gamma L_f}{1 - \gamma L_f}\right\}\\
             L_1 &=&  1 + \left( \dfrac{1 + \gamma L_f}{1 - \gamma\rho}\right).
        \end{array}
    \end{equation*} Consider the damped shadow Douglas-Rachford sequence \begin{equation*}
        x^{k+1}  := (1-\alpha)x^k + \alpha T_{\gamma}(x^k),
    \end{equation*} where $T_\gamma(x) = \prox_{\gamma f}\big( (1-\lambda)x +  \gamma \nabla f(x) + \lambda \prox_{\gamma g}(x - \gamma \nabla f(x)) \big)$. Then, for almost all initial points $x^0 \in \R^d$, if the limit $x^\star$ of $(x_k)_{k\geq1}$ exists, then $x^\star$  is a local minimiser of $\varphi$.
    
\end{corollary}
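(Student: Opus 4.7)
The plan is to invoke Theorem~\ref{DY:escape} with $h \equiv 0$; the corollary is a direct specialisation, so the work reduces to checking that every hypothesis of the theorem is inherited in this setting and that the two stated constants $L_1$ and $L_2$ coincide with the ones appearing in Theorem~\ref{DY:escape} evaluated at $L_h = 0$.

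First I would verify that Assumption~\ref{a:ass1} holds for the triple $(f,g,h \equiv 0)$: the zero function is trivially $L_h$-smooth with $L_h=0$ (and in fact a $C^2$ function with Lipschitz Hessian), while the smoothness of $f$, proper lower semicontinuity and prox-boundedness of $g$, and the existence of minimisers of $\varphi = f+g$ are exactly what the corollary assumes; the weak convexity of $g$ and the strict saddle property of $\varphi$ are carried over verbatim. Next, I would check that the damped Douglas-Rachford iteration displayed in the corollary is the specialisation to $h \equiv 0$ of the damped shadow DY operator: from \eqref{eq:DY-solution-map}, $Y_\gamma(x) = \RDY(x,\gamma) = \prox_{\gamma g}(x - \gamma \nabla (f+h)(x))$ collapses to $\prox_{\gamma g}(x - \gamma \nabla f(x))$ when $h \equiv 0$, so substituting this into \eqref{DY:T-operator} reproduces the $T_\gamma$ written in the corollary, and the damped update $(1-\alpha)x^k + \alpha T_\gamma(x^k)$ matches \eqref{eq:damped-DY-operator}--\eqref{eq:damped-shadow-DY}.

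Finally, I would confirm the Lipschitz constants. Substituting $L_h = 0$ into the expression for $L_1$ in Theorem~\ref{DY:escape} gives $L_1 = 1 + \frac{1 + \gamma L_f}{1-\gamma\rho}$, in agreement with Proposition~\ref{DY-continuity-prop}(i), and the same substitution into the formula for $L_2$ reproduces the expression in the corollary. The stepsize condition $\gamma \in (0,\rho^{-1})$ and the damping bound $\alpha L_2 < 1$ are then precisely the hypotheses of Theorem~\ref{DY:escape} in this specialisation, and its conclusion yields the desired almost-sure convergence to local minimisers of $\varphi$. I do not foresee any substantive obstacle: there is no new analytic content, only a verification that the Douglas-Rachford operator is the $h \equiv 0$ instance of the shadow DY operator, together with the bookkeeping of the constants.
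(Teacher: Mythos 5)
Your proposal is correct and is exactly the paper's route: the corollary is presented there as a direct application of Theorem~\ref{DY:escape} with $h=0$, and your verification that the hypotheses, the iteration operator, and the constants $L_1,L_2$ specialise as claimed (with $L_h=0$) is precisely the bookkeeping that justifies it.
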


\begin{remark} A few remarks are in order regarding Theorem~\ref{DY:escape} and Corollary~\ref{DR:escape}.
    \begin{enumerate}
        \item Corollary~\ref{DR:escape} extends \cite{liu2019envelope}, where the authors prove a similar result for the fully smooth case (where $f$, $h$ and $g$ are $C^2$ functions) of the Douglas-Rachford method \cite[Theorem 5.4]{liu2019envelope}. Theorem~\ref{DY:escape} also includes as a special case the fully smooth case of the forward-backward method in \cite[Theorem 5.4]{liu2019envelope}. Therein, these two methods are treated separately. Hence, our analysis unifies and extends the latter two. Furthermore, Theorem~\ref{DY:escape} for $h=0$ corresponds to the structured nonsmooth case in \cite[Theorem~4.2]{davis2022proximal}
        
        \item If $h=0$ and $\lambda =2$ in Theorem~\ref{DY:escape}, we obtain strict saddle point avoidance for the Peaceman–Rachford splitting method \cite{peaceman1955numerical,li2017peaceman}. As far as the author knows, this is the first result of this type for this  method. 
    \end{enumerate} 
\end{remark}

%



\section{Numerical experiments} \label{s:numerical}

In this section, we test Algorithm~\ref{a:damped-DY} in a nonconvex variable selection problem. As mentioned in \cite{atenas2025understanding}, some modern penalties used in statistics fall under the category of weakly convex semialgebraic functions, including the  minimax concave penalty (MCP) \cite{Zhang2010NearlyUV}, and the smoothly clipped absolute deviation (SCAD) \cite{fan2001variable} (whose closed forms and respective proximal operators can be found in \cite[Section 5]{atenas2025understanding}). It has been observed empirically (see, e.g. \cite[Figure 5]{atenas2025understanding}) that models with nonconvex regularisers can display a superior numerical performance in terms of the fixed-point residual, which motivates the analysis in this section. 

 The elastic net problem \cite{zou2005regularization} is a regularised linear modelling technique for variable selection that can handle collinear features.  Consider a matrix $A \in \R^{m \times d}$ with possibly highly correlated columns, and a vector $b \in \R^m$. The optimisation model of the elastic net is the following regularised minimisation  of residuals problem: \begin{equation} \label{eq:elastic-net}
     \min_{x \in \R^d} \frac{1}{2}\|Ax-b\|_2^2 + \mu\left(\nu P(x) + \frac{1-\nu}{2}\|x\|_2^2 \right),
 \end{equation} where $\mu \geq 0$ controls the level of regularisation, $P$ is a sparsity-inducing penalisation (traditionally, the $\ell_1$-norm), $\|\cdot\|_2^2$ induces grouped selection of correlated predictors, and $\nu \in [0,1]$ controls the weight of each component in the hybrid penalty term. For the classical choice $P = \|\cdot\|_1$ the problem is convex, thus any stationary point of this problem is a global minimiser. In this case, when $\nu = 1$, the model reduces to the LASSO \cite{tibshirani1996regression}, while for $\nu = 0$, it becomes the ridge regression \cite{hoerl1970ridge}. The advantage of the elastic net over the LASSO is that the latter may fail to select variables corresponding to highly correlated features, as it only induces sparsity via the $\ell_1$-norm. 
 
 For nonconvex penalties like MCP and SCAD,  values larger than a predetermined threshold are not penalised, and thus sparsity  only applies to variables with ``small values'' (see Figure~\ref{fig:penalties}). 
 This effect reduces the \emph{shrinking bias} effect of large coefficients known for the $\ell_1$-norm. It is also known, as Figure~\ref{fig:sparsity} demonstrates, that nonconvex penalties can be more aggressive in inducing sparsity in the solution. In particular, Figure~\ref{fig:sub-sparse-SCAD} shows that SCAD does a better job at recovering the zero components of the true vector (blue dots).

 \begin{figure}[ht!]
     \centering
\includegraphics[width=0.5\linewidth]{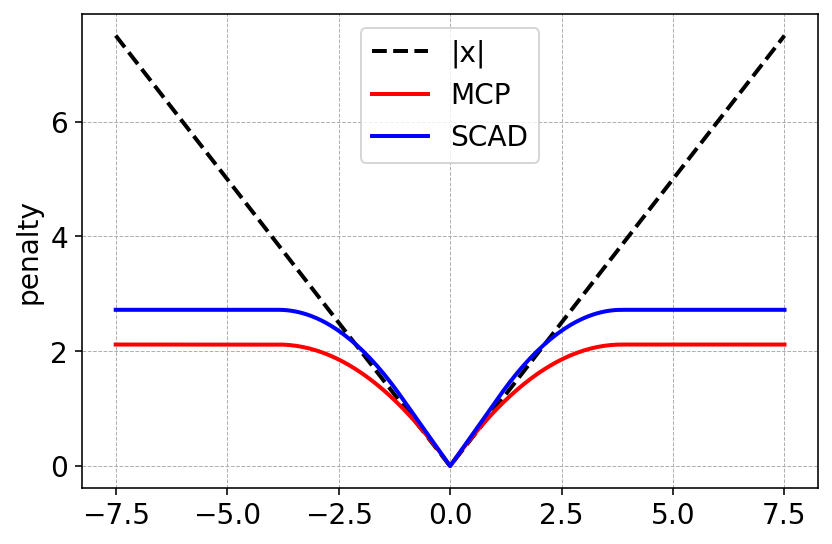}
     \caption{Comparison close to $0$ between nonconvex penalties and the absolute value.}
     \label{fig:penalties}
 \end{figure}

\begin{figure}[ht!]
	\centering
	\begin{subfigure}{.5\textwidth}
		\centering
		\includegraphics[width=.95\linewidth]{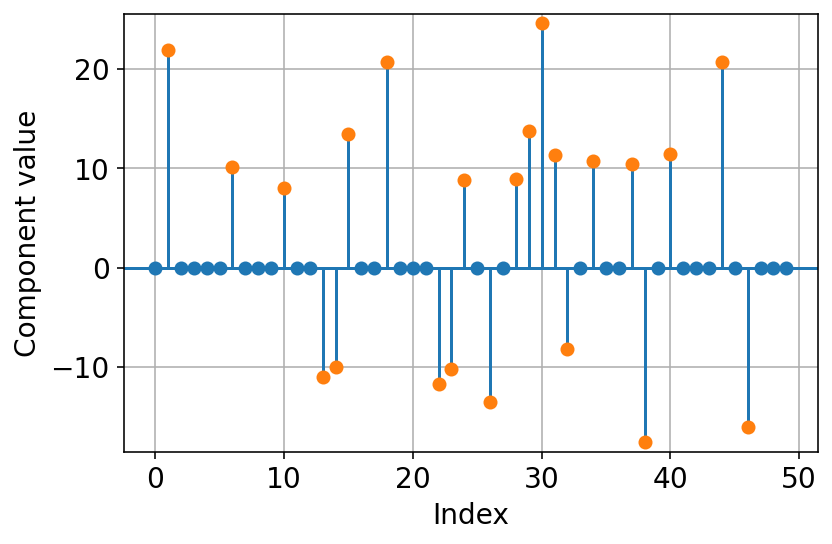}
		\caption{True vector}
		\label{fig:sub-sparse-true}
	\end{subfigure}%
	\begin{subfigure}{.5\textwidth}
		\centering
		\includegraphics[width=.95\linewidth]{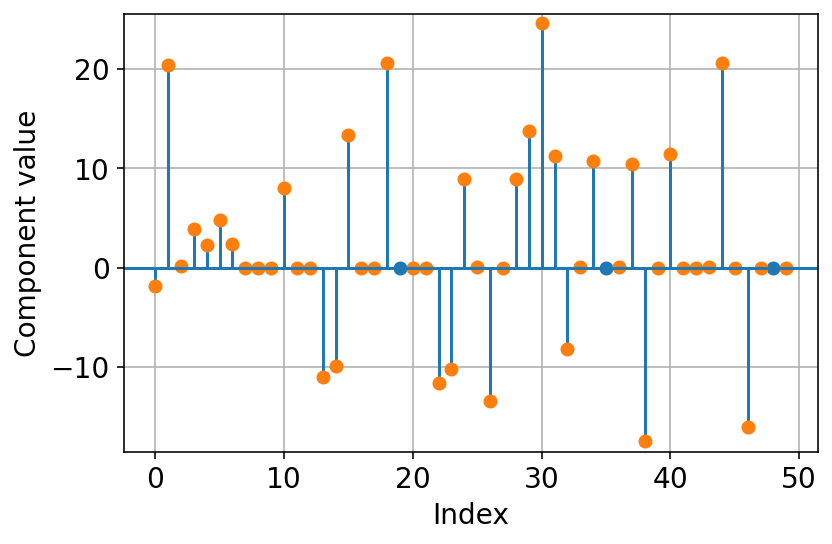}
		\caption{$\ell_1$ solution}
		\label{fig:sub-sparse-convex}
	\end{subfigure}
	\begin{subfigure}{.5\textwidth}
		\centering
		\includegraphics[width=.95\linewidth]{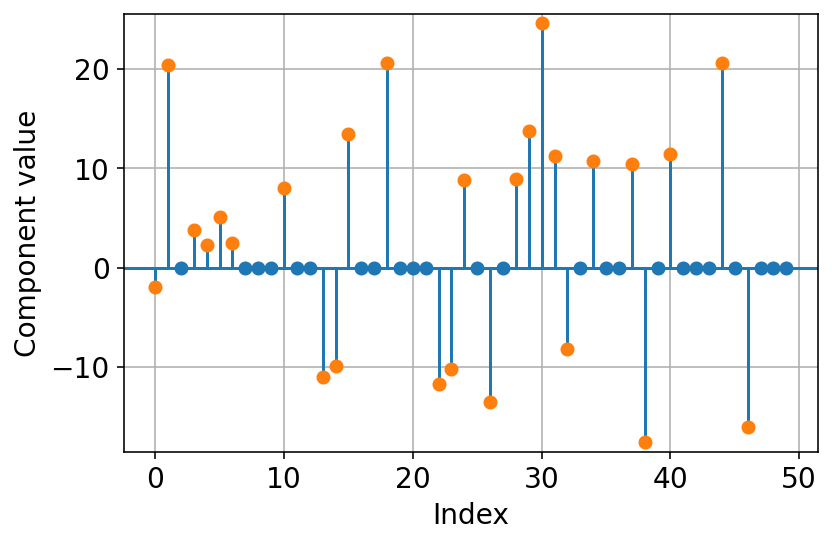}
		\caption{SCAD solution}
		\label{fig:sub-sparse-SCAD}
	\end{subfigure}%
	\begin{subfigure}{.5\textwidth}
		\centering
		\includegraphics[width=.95\linewidth]{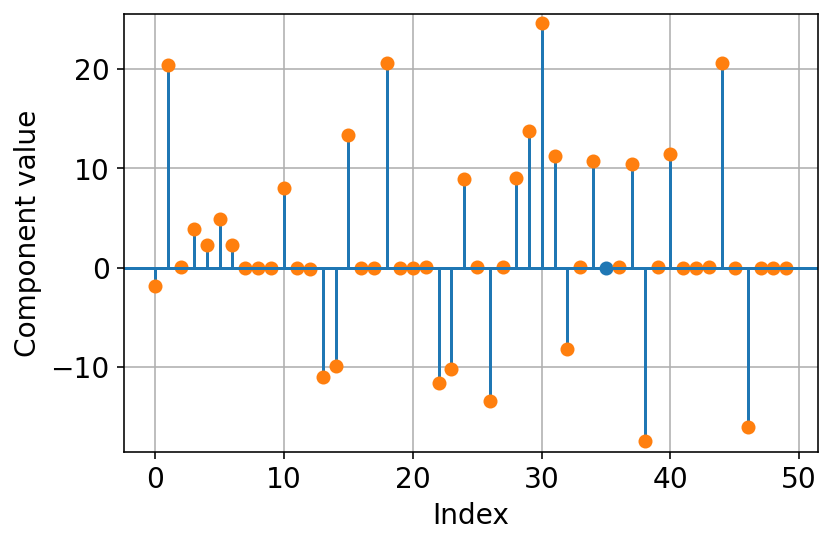}
		\caption{MCP solution}
		\label{fig:sub-sparse-MCP}
	\end{subfigure}
	\caption{Sample solutions of the elastic net model with convex and nonconvex penalisation, obtained using Algorithm~\ref{a:damped-DY}. 
    } 
	\label{fig:sparsity}
\end{figure}

In order to assess the quality of the solutions obtained by Algorithm~\ref{a:damped-DY} in relation to avoiding stationary points of ``poor quality'', we individually test the cases $P \in \{{\rm MCP}, {\rm SCAD}\}$. Figure~\ref{fig:stat-values} displays the final objective value when applying Algorithm~\ref{a:damped-DY} to solve problem \eqref{eq:elastic-net}. We choose $m = 100$, $d = 50$, and randomly generate the matrix $A$ which is later modified to produce collinear columns. We set $\mu = 0.5$, $\nu = \frac{2}{3}$, and randomly choose $1,000$ initial points $x^0 \in \R^d$. For Algorithm~\ref{a:damped-DY}, we set $\lambda = 1$, $\alpha$ to be $0.9$ of its upper bound in Theorem~\ref{DY:escape}, and $\gamma = 0.9 \cdot \min\{(L_f+L_h)^{-1}, \rho^{-1}\}$. The stopping criterion is the relative residual test $\frac{\|x^{k+1}-x^k\|}{\|x^k\|} \leq 10^{-6}$. The orange horizontal lines in Figures~\ref{fig:sub-stat-SCAD} and~\ref{fig:sub-stat-MCP} represent the target value $\varphi(x_{\rm true})$, where $x_{\rm true}$ is the true vector to recover (randomly generated).

Table~\ref{tab:stat-values} summarises the results of Figure~\ref{fig:stat-values}. The table shows the percentage of instances that achieve at most the target value $\varphi(x_{\rm true})$ (third column), and at most the $10\%$ above the value $\varphi(x_{\rm true})$ (fourth column). 

\begin{table}[h!]
\centering
\begin{tabular}{|c|c|c|c|}
\hline
Penalty & $\varphi(x_{\rm true})$ & $\% \leq \varphi(x_{\rm true})$ & $\% \leq 1.1 \cdot \varphi(x_{\rm true})$  \\ \hline
SCAD & $378.6107$ & $39.1$ & $75.1$  \\ \hline
MCP & $366.4684$ & $51.1$  & $96$ \\ \hline
\end{tabular}%
\caption{Percentage of instances that achieve the target value $\varphi(x_{\rm true})$, and $10\%$ above the target value, for $1,000$ randomly initialised instances.}
\label{tab:stat-values}
\end{table}

We observe from Figure~\ref{fig:stat-values} and Table~\ref{tab:stat-values} that the majority of the stationary points found by Algorithm~\ref{a:damped-DY} do not stray too far away from the target value $\varphi(x_{\rm true})$, given the nonconvex penalty chosen. This metric may not be too informative in reality, as $x_{\rm true}$ is usually not known. Further research is necessary to define (local) optimality certificates that can be implemented to test the quality of solutions.




\begin{figure}[ht!]
	\centering

    \begin{subfigure}{.495\textwidth}
		\centering
		\includegraphics[width=.95\linewidth]{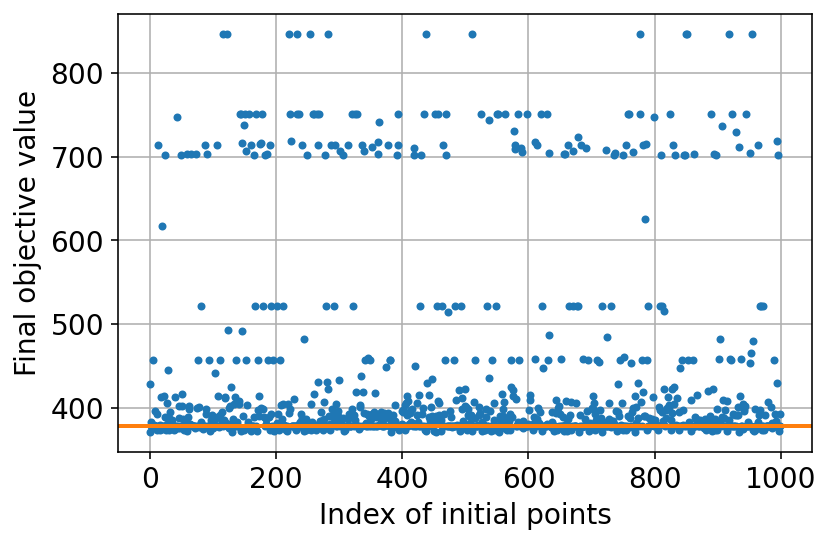}
		\caption{SCAD}
		\label{fig:sub-stat-SCAD}
	\end{subfigure}
	\begin{subfigure}{.495\textwidth}
		\centering
		\includegraphics[width=.95\linewidth]{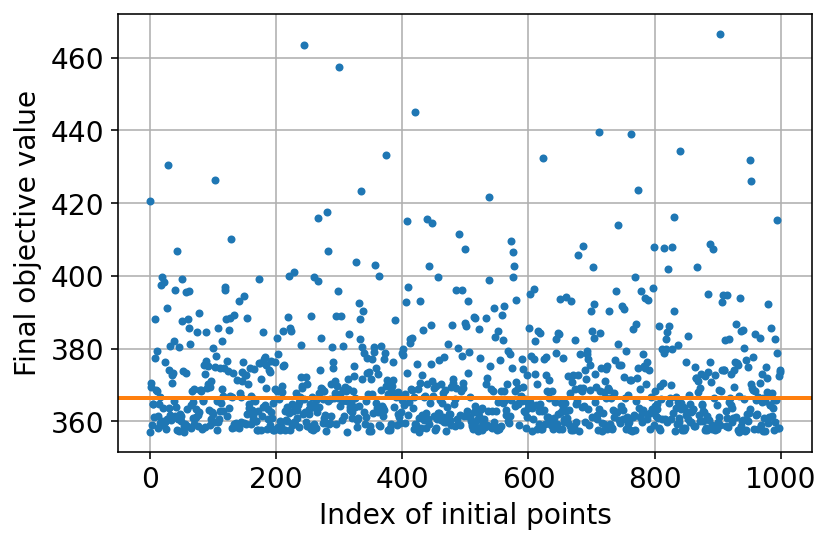}
		\caption{MCP}
		\label{fig:sub-stat-MCP}
	\end{subfigure}%

	\caption{Stationary values of the elastic net model with  nonconvex penalisation, obtained using Algorithm~\ref{a:damped-DY} for $1,000$ randomly chosen initial points.} 
	\label{fig:stat-values}
\end{figure}

\section{Conclusion} \label{s:conclusion}

In this work, we used the Davis-Yin envelope to explain the asymptotic behaviour of the Davis-Yin splitting method when solving nonconvex optimisation problems. In particular, we showed epigraphic properties of the envelope, continuity properties of the solution maps, and convergence with high probability to local minimisers of a damped version of the algorithm.

Future research directions can explore relaxing the assumption of having two of the functions in problem \eqref{DY-problem} smooth. This would allow treating constraints directly via indicator functions, instead of via its Moreau envelope (the squared distance to the set, which is a smooth function), so that feasibility can be guaranteed in each iteration. This would also expand the applicability of multioperator splitting methods in nonconvex optimisation, as the expected generalisation given by the current theory is that problems of the form $\min \sum_{i=1}^{n-1}f_i + g$ can only be solved via splitting methods if $f_1, \dots, f_{n-1}$ are smooth and $g$ is nonconvex.


\medskip

\noindent\textbf{Funding.} The author was supported in part by Australian Research Council grant DP230101749.

\noindent\textbf{Conflict of interest.} The author has no competing interests to declare that are relevant to the content of this article.

{\noindent\textbf{Data availability and replication of results.} The datasets and code used in this work to run the experiments are available at \href{https://github.com/fatenasm/nonconvex-elastic-net.git}{github.com/fatenasm/nonconvex-elastic-net}.}

\bibliographystyle{siam}
\bibliography{biblio}
\end{document}